\newtheorem{teo}{Theorem}[section]
\newtheorem{theo}[teo]{Theorem}
                \newtheorem{coro}[teo]{Corollary}
\newtheorem{lema}[teo]{Lemma}
\newtheorem{prop}[teo]{Proposition}
\newtheorem{defi}[teo]{Definition}
\newtheorem{obse}[teo]{Observation}
\newcommand{\modRG}{{}_{(R,G)}\,\mathcal M}
\newcommand{\modRH}{{}_{(R,H)}\,\mathcal M}
\newcommand{\modG}{{}_{G}\mathcal M}
\newcommand{\modH}{{}_{H}\mathcal M}
\newcommand{\id}{\ensuremath{\mathrm{id}}}
\begin{document}


\title[A short survey on observability]
{A short survey on observability}

\author{Walter Ferrer Santos} \address{Departamento de matem\'atica y
  aplicaciones, Cure \\Universidad de la Rep\'ublica\\ Tacuaremb\'o
  entre Av. Artigas y Aparicio Saravia\\CP 20000,
  Maldonado\\Uruguay\\wrferrer@cure.edu.uy} \thanks{The author would
  like to thank, Anii, Csic, Pedeciba, Prog. 720, Udelar, for
  partially financing activities related to the current research
  projects of the author.}
\begin{abstract}
   {\footnotesize The exploration of the notion of observability
     exhibits transparently the rich interplay between algebraic and
     geometric ideas in \emph{geometric invariant theory}. The concept
     of \emph{observable subgroup} was introduced in the early 1960s
     with the purpose of studying extensions of representations from
     an affine algebraic subgroup to the whole group. The extent of
     its importance in \emph{representation and invariant theory} in
     particular for Hilbert's $14^{\text{th}}$ problem was noticed
     almost immediately. An important strenghtening appeared in the
     mid 1970s when the concept of \emph{strong observability} was
     introduced and it was shown that the notion of observability can
     be understood as an intermediate step in the notion of
     reductivity (or semisimplicity), when adequately
     generalized. More recently starting in 2010, the concept of
     observable subgroup was expanded to include the concept of
     \emph{observable action} of an affine algebraic group on an
     affine variety, launching a series of new applications. In 2006
     the related concept of \emph{observable adjunction} was
     introduced, and its application to module categories over tensor
     categories was noticed. In the current survey, we follow
     (approximately) the historical development of the subject
     introducing along the way, the definitions and some of the main
     results including some of the proofs. For the unproven parts,
     precise references are mentioned.}
\end{abstract} 

\maketitle


\section{Introduction}
\label{section:intro}
The concept of observable subgroup of an affine algebraic group $G$
was introduced by A. Bialynicki--Birula, G. Hochschild and G.D. Mostow
in 1963 in \cite{kn:hmbb}: \emph{Extension of representations of algebraic linear groups} (hereafter referred to as ERA).

Initially the notion of observability was related to the following
situation.

Assume that $H \subseteq G$ is a pair of a subgroup and a group. We
say that a representation $(V,\rho)$ of $G$ is an extension of a
representation $(U,\sigma)$ of $H$ if: $U \subseteq V$ and the action
$\rho: G \times V \to V$ restricts to $\sigma:H \times U \to
U$\footnote{For the above question to make sense, the general
  definition has to be adapted to particular situations involving a
  basic field --where the representations are defined-- and a precise
  description of the actions we are working with (i.e. maps such as
  $\rho$ and $\sigma$ above) that have to be adapted to the additional
  structure of the groups under consideration -- analytic,
  differentiable, algebraic, etc.}.

The main question adressed by the authors of \cite{kn:hmbb} concerns
the following problem: in the case that $H \text{ and } G$ are affine
algebraic groups, and the representations are finite dimensional and
rational, does every representation of $H$ admits an extension? In
the situation that the answer is positive the group is said to be \emph{observable}.

In the introduction of ERA the authors write:

\smallskip
\begin{center}
\begin{minipage}{.8\textwidth}
{\small {\em Let $G$ be an algebraic linear group over an arbitrary field
  $F$. If $\rho$ is a rational representation of G by linear
  automorphisms of a finite--dimensional $F$--space $U$, we refer to
  this structure $(U, \rho)$ by saying that $U$ is a
  finite--dimensional rational $G$--module. A $G$--module that is a
  sum (not necessarily direct) of finite--dimensional rational
  $G$--modules is called a rational $G$--module. Let $H$ be an
  algebraic subgroup of $G$. We are interested in determining when
  every finite--dimensional rational representation of $H$ can be
  extended to a rational representation of $G$, i.e., when every
  finite--dimensional rational $H$--module can be imbedded as a
  $H$--submodule in a rational $G$--module.}}
\end{minipage}
\end{center}
\bigskip

{\sc Notations and prerequisites.} In this paper we assume that the
reader is familiar with the basic results and notations of the theory of affine algebraic groups its actions and representations which appear
--eventually with slight differences-- in the intial chapters of the standard textbooks on the subject such as:
A. Borel's \cite{kn:borelbook}, C. Chevalley's \cite{kn:Che}, G. Hochschild's \cite{kn:hobook}, J. E. Humphrey's \cite{kn:humph}, T.A. Springer's \cite{kn:sprbook} or the more recent monograph \cite{kn:nosotros2}.
We work with groups and varieties defined over an algebraically
closed field that will be denoted as $\Bbbk$.

If $G$ is an affine algebraic group then the algebra $\Bbbk[G]$ of
polynomial functions on $G$ (with pointwise operations of sum and
product) is in fact a Hopf algebra and its operations are defined as
follows.  The comultiplication $\Delta:\Bbbk[G] \to \Bbbk[G] \otimes
\Bbbk[G] \text{ written as } \Delta(f)=\sum f_1 \otimes f_2$
--Sweedler's notation-- is characterized by the fact that for all $x,y
\in G$: $\sum f_1(x)f_2(y)=f(xy)$. The antipode $S: \Bbbk[G] \to
\Bbbk[G]$ is defined for all $x \in G$ as $S(f)(x)=f(x^{-1})$ and the
counit $\varepsilon: \Bbbk[G] \to \Bbbk$ is $\varepsilon(f)=f(e)$.  In
particular de left and right translations of $f$ by an element $x \in
G$ are $x \cdot f =\sum f_1f_2(x)$; $f \cdot x =\sum f_1(x)f_2$.

A --not necessarily finite dimensional-- rational (left) $G$--module
$M$ can be defined in terms of a (right) $\Bbbk[G]$--comodule
structure $\chi_M: M \to M \otimes \Bbbk[G]$, and this structure map is
written \`a la Sweedler as $\chi(m)=\sum m_0 \otimes m_1 \in M \otimes
\Bbbk[G]$. It is related with the action of $G$ on $M$ by the formula
($x \in G\,,\, m \in M$): $x \cdot m=\sum m_0m_1(x)$.  The category of
rational $G$--modules is denoted as $\modG$, and by definition it
coincides with the category of $\Bbbk[G]$--comodules. If $N \in
\modG$, we denote as ${}^GN:=\{n \in N: x\cdot n=n \text{ for all } x
\in G\}$ and it is clear that ${}^GN=\{n \in N: \chi(n)=n \otimes 1\}$
with $\chi$ the $\Bbbk[G]$--comodule structure on $N$. If $M$ is a finite dimensional rational $G$--module and $m \in M\,,\,\alpha \in M^*$ we call $\alpha|m \in \Bbbk[G]$ the polynomial $\alpha|m=\sum \alpha(m_0)m_1$ or in explicit terms: $(\alpha|m)(x)=\alpha(x\cdot m)$ for $x \in G$. It is clear that $x\cdot (\alpha|m)=\alpha|(x \cdot m)$ for all $x \in G$. Also, in the
case of a closed inclusion $H \subseteq G$ of affine algebraic groups,
if $N \in \modG$, $N|_H$ is the $H$--module obtained by result of the
restriction of the $G$--action to an $H$--action. In this situation if
the structure of $\Bbbk[G]$--comodule of $N$ is $\chi(n)=\sum n_0
\otimes n_1 \in N \otimes \Bbbk[G]$, the structure of $N|_H$ as a
$\Bbbk[H]$--comodule is $(\id \otimes \pi)\chi(n)=\sum n_0 \otimes
\pi(n_1) \in N \otimes \Bbbk[H]$ where $\pi:\Bbbk[G] \to \Bbbk[H]$ is
the restriction morphism. 

Concerning some algebraic aspects: all algebras will be commutative
--unless explicitly stated-- and over a base field $\Bbbk$ that is
algebraically closed. An algebra is affine if it is commutative,
finitely generated and with no non--zero nilpotents.

Morever, the category $\modG$ for an affine algebraic group $G$ is abelian, and has enough injectives. This guarantees that the basic machinery of homological algebra is available in the working platform of this survey.
In particular, this category has the particularity that $\Bbbk[G] \in \modG$ is an injective object and also that if $M \in \modG$ is an arbitrary rational $G$--module, then $M \otimes \Bbbk[G]$ is injective. In this manner one has that the coaction map $\chi:M \to M \otimes \Bbbk[G]$ produces an imbedding of $M$ in an injective object and this guarantees that the category has enough injectives.

Sometimes we deal with the categories of $(R,G)$--modules --denoted as
$\modRG$, where $R$ is a rational commutative $G$--module algebra. 
We say that $M$ is an $(R,G)$--module, provided that it is a rational $G$--module, a module over
  the ring $R$ and that the actions are related in the
  following manner if $x \in G\,,\, r \in R\,,\,m\in M$,
  $x\cdot (rm)=(x\cdot r)(x \cdot m)$. The morphisms are defined in
  the obvious way.
  
\section{Antecedents, faithfull representations of Lie groups}

The concerns that led to the discovery of the concept of
observability, seem to derive from the persuit of the understanding
and simplification of a series of results on the existence of faithful
finite dimensional representations of Lie groups (due to E. Cartan,
M. Goto, D. Ado, A. Malcev, K. Iwasawa, G. Hochschild and others).

Below we trace backwards the main steps of this process.

Previously to the results appearing in ERA, Hochschild and Mostow
published in 1957/58 two important papers
(\cite{kn:homolie,kn:mostow}) on the extension of representations of
Lie groups that are cited explicitly in the aforementioned introduction
of ERA:
\smallskip
\begin{center}
\begin{minipage}{.8\textwidth}
  {\small {\em
In the analogous situation
  for Lie groups, an analysis of the the obstructions to the
  extendibility of representations of a subgroup has been made only
  for normal subgroups, \cite{kn:homolie,kn:mostow}, and not
  much is known in the general case. The algebraic case turns out to
  be much more accessible.}}
\end{minipage}
\end{center}
\smallskip
The differences between the algebraic case and the Lie group situation
are remarkable and it is patent from the comparison between the results for Lie groups in  \cite{kn:homolie,kn:mostow} and the situation of algebraic groups in \cite{kn:hmbb}.

For example, in the first mentioned papers and in a rather laborious
way, the authors prove the following result.
\begin{teo}\cite[Theorem 4.1]{kn:mostow} Let $H
\subseteq G$ be a closed normal inclusion in the category of (real or
complex) analytic groups and denote as $N$ the radical of the
commutator subgroup $G'$ of $G$. Assume that $\rho$ is a finite
dimensional representation of $H$ and that $\rho'$ is the semisimple
representation associated to $\rho$.  Then, $\rho$ can be extended to
$G$ (with a finite dimensional extension) if and only if the following
three conditions hold:
\begin{enumerate}
\item $\rho'$ is trivial in $H\cap N$;
\item The representation $\sigma$ of $HN$ defined by
  $\sigma(xu)=\rho'(x)$ for $x \in H\,,\, u \in N$ is continuous when
  $HN$ is endowed with the topology induced by $G$;
\item Call $G_{f}$ the intersection of all the kernels of all the
  finite dimensional representations of $G$. Then $\rho$ is trivial in
  $G_f \cap H$.
\end{enumerate}
\end{teo}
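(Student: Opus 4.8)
I would establish this characterization by proving the two implications separately: the forward direction (necessity of the three conditions) is essentially formal, while the converse carries all the weight.

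\smallskip
\noindent\emph{Necessity.} Suppose $V$ is a finite dimensional $G$--module containing $\rho$ as an $H$--submodule. Condition (3) is immediate: $G_f$ is killed by every finite dimensional $G$--module, hence kills $V$, so $\rho$ is trivial on $G_f\cap H$. For (1) I would use the structural fact that the semisimplification $V'$ of any finite dimensional $G$--module is trivial on $N$: if $R$ denotes the (reductive) Zariski closure of the image of $V'$, then $\overline{V'(N)}$ is a connected solvable normal subgroup of $R$, so it lies in the central torus of $R$, while it also lies in $\overline{V'(G')}\subseteq[R,R]$; since the semisimple part of a reductive group meets its central torus in a finite subgroup, $\overline{V'(N)}$ is finite and connected, hence trivial. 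As the composition factors of $\rho$ occur among those of $V'|_{H}$, which are trivial on $H\cap N$, we obtain (1). Finally, $V'$ being trivial on $N$, it factors through a continuous representation of $G/N$; composing with the continuous map $HN\to G/N$ exhibits $\sigma$ as continuous on $HN$ -- the formula $\sigma(xu)=\rho'(x)$ being unambiguous precisely because of (1) -- which is (2).

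\smallskip
\noindent\emph{Sufficiency.} Assume (1), (2), (3). The essential first move is a reduction to the \emph{linear} case: any finite dimensional $G$--module containing $\rho$ is killed by $G_f$, and by (3) $\rho$ already factors through $H/(H\cap G_f)\hookrightarrow G/G_f$, so one may replace $(G,H,\rho)$ by $(G/G_f,\,H/(H\cap G_f),\,\bar\rho)$ -- checking that (1) and (2) persist -- and thereby assume $G$ admits a faithful finite dimensional representation, i.e. is a linear analytic group. This step cannot be skipped: universal--cover phenomena show that when $G_f$ is nontrivial there may be too few finite dimensional representations of $G$ to extend even a semisimple $\rho$. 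With $G$ linear, (3) holds automatically, and one extends $\rho$ in two stages. First extend the semisimple part $\rho'$: by (1) it factors through $HN/N$, a closed normal subgroup of the reductive group $G/N$ (reductive since $(G/N)'=G'/N$ is semisimple), and by (2) it is continuous; extending a completely reducible representation of a closed normal subgroup in this reductive setting is the normal--subgroup problem treated in \cite{kn:homolie,kn:mostow} by Clifford theory, and inflating the result along $G\to G/N$ produces a finite dimensional $G$--module $V'$ containing $\rho'$ as an $H$--submodule. Then restore the non--semisimple part: since $\rho$ is a successive extension of the irreducible constituents of $\rho'$, one enlarges $V'$ by a corresponding sequence of extensions in the category of finite dimensional $G$--modules, at each stage requiring a class in $\operatorname{Ext}^1_G$ that restricts to the prescribed class in $\operatorname{Ext}^1_H$; the abundance of finite dimensional representations of a linear $G$ is what makes the relevant restriction maps surjective onto the part one needs, and assembling these extensions yields a $G$--module $V$ containing $\rho$. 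Undoing the reduction concludes the proof.

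\smallskip
\noindent The step I expect to be the principal obstacle is the second stage of the sufficiency -- restoring the unipotent part. The semisimple part is governed by reductive structure theory and the older normal--subgroup results, but controlling the non--semisimple part requires both the reduction to the linear case (exactly where hypothesis (3) is indispensable) and a delicate cohomological bookkeeping to guarantee that the extension classes built over $G$ really do restrict to the ones prescribed over $H$. This is the technical heart of \cite[Theorem 4.1]{kn:mostow}, and the reason that, as the introduction of ERA concedes, the Lie--group argument is ``rather laborious''.
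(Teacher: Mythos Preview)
The survey does not prove this theorem. It is stated purely as a citation of \cite[Theorem~4.1]{kn:mostow}, for historical contrast: immediately after the statement the author writes ``The above theorem is the main result of \cite{kn:mostow} \ldots\ It is interesting to compare it with the following very simple criterion for the extension of a representation in the case of affine algebraic groups'', and then moves on to Theorems~\ref{theo:obse1} and~\ref{theo:obse2}. There is no argument in the paper to compare your proposal against; the whole point of the passage is that the Lie--group case is laborious whereas the algebraic case is clean, so the survey deliberately omits Mostow's proof.

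As for your sketch on its own merits: the architecture you lay out --- necessity being formal, sufficiency proceeding via the reduction modulo $G_f$, extending $\rho'$ through the reductive quotient $G/N$ using the earlier normal--subgroup results, and then reconstructing the non--semisimple part --- is broadly the shape of Mostow's argument. Your identification of the ``restoring the unipotent part'' step as the crux is accurate. But the phrase ``the abundance of finite dimensional representations of a linear $G$ is what makes the relevant restriction maps surjective onto the part one needs'' is not a proof; this is exactly the place where Mostow does real work, and your paragraph does not indicate \emph{how} one produces the required $\operatorname{Ext}^1_G$ classes restricting correctly. If you were actually writing this up you would need to supply that mechanism explicitly (Mostow builds the extension rather concretely rather than phrasing it as an $\operatorname{Ext}$ surjectivity statement). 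For the purposes of this survey, however, no proof is expected.
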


The above theorem is the main result of \cite{kn:mostow}, whereas in
the first paper \cite{kn:homolie}, a particular case is proved with
additional topological conditions. It is interesting to compare it
with the following very simple criterion for the extension of a
representation in the case of affine algebraic groups without the hypothesis of normality (this subject
will be treated in more detail and precision in Section
\ref{subsection:obsgeo}).

First we need to introduce some definitions.
\begin{defi} Let $H \subseteq G$ be
  a closed inclusion of affine algebraic groups. \begin{enumerate}
  \item A character $\chi:H \to \Bbbk$ is said to be extendible to $G$
    if there is a polynomial function $f \in \Bbbk[G]$ such that
    $f(1)=1$ and for all $x \in H$, $x \cdot f =\chi(x)f$ --or
    equivalently, for all $y \in G$, $f(yx)=f(y)\chi(x)$.
    \item If $M=(M,\cdot)$ is a rational $H$--module, and $\chi$ is a
      character of $H$, we call $M_\chi$ the rational $H$--module
      $(M,\cdot_\chi)$ where $\cdot_\chi $ is defined on $M$ as $x
      \cdot_\chi m=\chi(x)(x\cdot m)$ for all $m \in M$. Clearly
      $M_\chi= M \otimes \Bbbk_\chi$ where $\Bbbk_\chi$ is the one
      dimensional $H$--module associated to the character $\chi$.
    \end{enumerate}
    \end{defi}
Next theorem guarantees that for affine algebraic groups, every
representation can be extended ``up to the twist by an extendible
character''.

\begin{theo}\cite[Theorem 1]{kn:hmbb}\cite[Theorem 8.2.3]{kn:nosotros2}
  \label{theo:obse1} If $H \subseteq G$
  is a closed inclusion of affine algebraic groups for any rational
  finite dimensional $H$--module $M$ there exists a finite dimensional
  rational $G$--module $N$ and a character $\chi: H \to \Bbbk$ such
  that:
  \begin{enumerate} \item The character $\chi$ is extendible;
    \item $M_\chi \subseteq N|_H$, where $N|_H$ denotes that we
      consider the action of $N$ restricted to $H$.
\end{enumerate}
      Moreover, in the case that $M$ is a simple $H$--module, $N$ can be taken to be a simple $G$--module and even more particulary a simple $G$--submodule of $\Bbbk[G]$. Also given a pair $0 \neq m \in M$ and $z \in G$ there is such an injection $M_\chi \subseteq \Bbbk[G]$ such that $m(z) \neq 0$.  
  
\end{theo}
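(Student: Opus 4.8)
The plan is to construct $N$ directly inside $\Bbbk[G]$ using the polynomial functions $\alpha|m$ defined in the preliminaries, exploiting that $\Bbbk[G]$ is an injective $G$-module and that restriction $\pi:\Bbbk[G]\to\Bbbk[H]$ is a surjective Hopf algebra map. First I would dualize and linearize: pick a basis $m_1,\dots,m_n$ of $M$ with dual basis $\alpha_1,\dots,\alpha_n\in M^*$, and consider the map $c_M:M\to M\otimes\Bbbk[H]$ given by the $H$-comodule structure, equivalently the "matrix coefficients" $(\alpha_i|m_j)_H\in\Bbbk[H]$ where here $\alpha|m$ is computed using the $H$-action. The span of these functions in $\Bbbk[H]$ is an $H$-subcomodule, and $M$ embeds (as $H$-module) into a direct sum of copies of this span, hence into a direct sum of copies of $\Bbbk[H]$; concretely $m\mapsto\sum_i \alpha_i(m_0)m_1=(\text{coaction})$ realizes $M\hookrightarrow M\otimes\Bbbk[H]$ with $\Bbbk[H]$ an injective, so after choosing a retraction it suffices to embed $M$ $H$-equivariantly into $\Bbbk[H]^{(r)}$ for some $r$, and then lift each coordinate.

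The key step is the lifting: given an $H$-module map $M\to\Bbbk[H]$, I want to factor it (up to a twist) through $\pi:\Bbbk[G]\to\Bbbk[H]$. Since $\pi$ is surjective I can lift individual functions, but I need the lift to be $H$-equivariant and finite dimensional. Here I would invoke injectivity of $\Bbbk[G]$ as a $G$-module together with a comparison of the $G$-injective hull machinery: $\Bbbk[H]\cong\Bbbk[G]\otimes_{\Bbbk[G]}\Bbbk[H]$ is a rational $G$-module via left translation, and the $H$-module map $M\to\Bbbk[H]$ extends along $M\hookrightarrow M\otimes\Bbbk[G]=:\widetilde N$ (the injective hull-type embedding) to a $G$-module map $\widetilde N\to\Bbbk[H]$ — but $\widetilde N$ is infinite dimensional, so I then replace it by a finite dimensional $G$-submodule $N_0\subseteq\widetilde N$ whose image under $\widetilde N\to\Bbbk[H]$ still contains the image of $M$. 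The composite $N_0\to\Bbbk[H]$ need not be injective on the relevant copy of $M$, but the obstruction to injectivity is exactly measured by a character: the failure function $f\in\Bbbk[G]$ one produces satisfies $x\cdot f=\chi(x)f$ for $x\in H$, which is precisely the definition of $\chi$ being extendible (with the normalization $f(1)=1$ arranged by scaling). Twisting $M$ by this $\chi$ corrects the defect and yields $M_\chi\hookrightarrow N|_H$ with $N$ finite dimensional.

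For the simple case, if $M$ is simple then the embedding $M\hookrightarrow\Bbbk[H]^{(r)}$ lands, after projecting, in a single $H$-isotypic component, and the corresponding lift produces a $G$-submodule of $\Bbbk[G]$ containing a copy of $M_\chi$; passing to a simple $G$-subquotient and then, using that $\Bbbk[G]$ is an injective $G$-module so every simple $G$-module embeds in $\Bbbk[G]$, one arranges $N$ to be a simple $G$-submodule of $\Bbbk[G]$. The final refinement — that for prescribed $0\neq m\in M$ and $z\in G$ one may take the embedding $M_\chi\subseteq\Bbbk[G]$ with $m(z)\neq0$ — follows because the set of such embeddings is acted on by $G$ via translation: if $\iota(m)(z)=0$ for one embedding $\iota$, translating $\iota$ by a suitable group element (or adding a translate, then re-extracting a simple submodule) moves the zero, since $\iota(m)$ is a nonzero polynomial on $G$ and hence does not vanish identically. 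The main obstacle I expect is the bookkeeping in the lifting step: ensuring simultaneously that the lift is $G$-equivariant, finite dimensional, and that the comultiplication-based computation of the twisting character $\chi$ genuinely produces an \emph{extendible} character in the precise sense of the definition — this is where the Hopf-algebraic identity $\sum f_1(x)f_2(y)=f(xy)$ must be used carefully to identify the defect function's transformation rule.
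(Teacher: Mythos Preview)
The paper does not actually supply a proof of this theorem; it is only stated with citations to \cite{kn:hmbb} and \cite{kn:nosotros2}. So there is no in-paper argument to compare against, but your proposal can still be assessed against the classical proof it cites.

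Your outline has a genuine gap at the lifting step. You write that after producing a finite dimensional $G$--submodule $N_0$ and mapping it to $\Bbbk[H]$, ``the obstruction to injectivity is exactly measured by a character: the failure function $f\in\Bbbk[G]$ one produces satisfies $x\cdot f=\chi(x)f$''. This is asserted, not argued, and in general it is false: the defect of an $H$--equivariant lift through a surjection of $H$--modules is governed by an extension class, not by a single character, and nothing in your setup forces the relevant kernel to be one--dimensional or even $H$--isotypic. A related confusion is the ``injective hull--type embedding'' $M\hookrightarrow M\otimes\Bbbk[G]$: since $M$ is only an $H$--module, its comodule structure lands in $M\otimes\Bbbk[H]$, and the map $m\mapsto m\otimes 1$ into $M\otimes\Bbbk[G]$ is not $H$--equivariant for the action you need. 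So the object $\widetilde N$ you build does not carry the structure your argument requires.

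The classical proof (the one the paper cites) produces the character by a completely different mechanism that your sketch omits: one first finds a finite dimensional $G$--module $V$ together with an $H$--equivariant \emph{surjection} $V|_H\twoheadrightarrow M$ (this part is close to what you do, via matrix coefficients and lifting along $\pi$), lets $K$ be its kernel, and then passes to exterior powers. The line $\bigwedge^{\dim K}K\subseteq\bigwedge^{\dim K}V$ is $H$--stable, which manufactures an extendible character $\chi$, and wedging with a generator of that line gives an $H$--injection $M\hookrightarrow\bigl(\bigwedge^{\dim K+1}V\bigr)_{\chi^{-1}}$, i.e.\ $M_\chi\hookrightarrow N|_H$ with $N=\bigwedge^{\dim K+1}V$. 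The determinant/exterior--power trick is the missing idea; without it there is no reason the twisting datum should collapse to a single extendible character.
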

\begin{theo}\cite[Theorem 11.2.9]{kn:nosotros2}\label{theo:obse2}
In the situation above, if the character $\chi^{-1}$ is extendible
then the finite dimensional $H$ module $M$ can be imbedded (as a $H$
submodule) in a finite dimensional $G$--module $N$. In particular if for every extendible character $\chi$, the character $\chi^{-1}$ is also extendible the subgrup $H$ is observable in $G$.  Moreover if $H \subseteq G$ is observable, then all characters of $H$ are extendible to $G$.  
\end{theo}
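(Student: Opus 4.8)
The plan is to derive all three assertions from Theorem \ref{theo:obse1}, using only two elementary facts. The first is that the regular representation $\Bbbk[G]\in\modG$ is locally finite, so that every $f\in\Bbbk[G]$ lies in some finite dimensional rational $G$-submodule of $\Bbbk[G]$. The second is that for a character $\chi$ of $H$ one has $\Bbbk_\chi\otimes\Bbbk_{\chi^{-1}}\cong\Bbbk$ (the trivial $H$-module), while the restriction functor $\modG\to\modH$ commutes with tensor products; hence $(M_\chi)_{\chi^{-1}}=M\otimes\Bbbk_\chi\otimes\Bbbk_{\chi^{-1}}\cong M$ for every rational $H$-module $M$.

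For the first assertion I would start from the finite dimensional rational $H$-module $M$ and apply Theorem \ref{theo:obse1} to obtain an extendible character $\chi$ and a finite dimensional rational $G$-module $N_0$ with $M_\chi\subseteq N_0|_H$. Since $\chi^{-1}$ is extendible, there is $g\in\Bbbk[G]$ with $g(1)=1$ and $x\cdot g=\chi^{-1}(x)\,g$ for all $x\in H$; choose a finite dimensional rational $G$-submodule $W\subseteq\Bbbk[G]$ with $g\in W$. Then $\Bbbk g$ is an $H$-stable line of $W|_H$ on which $H$ acts by $\chi^{-1}$, so $\Bbbk g\cong\Bbbk_{\chi^{-1}}$ in $\modH$. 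Putting $N:=N_0\otimes W$, a finite dimensional rational $G$-module, we find inside $N|_H=N_0|_H\otimes W|_H$ the $H$-submodule $M_\chi\otimes\Bbbk g\cong M_\chi\otimes\Bbbk_{\chi^{-1}}\cong M$, which is the required embedding.

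The second assertion is then immediate: for an arbitrary finite dimensional rational $H$-module $M$, Theorem \ref{theo:obse1} supplies an extendible $\chi$ with $M_\chi\subseteq N_0|_H$; the hypothesis makes $\chi^{-1}$ extendible as well, so the first part embeds $M$ into a finite dimensional rational $G$-module, and since $M$ is arbitrary $H$ is observable. (One may note in passing that the extendible characters always form a submonoid of the character group of $H$ --- if $f,h$ witness the extendibility of $\chi,\psi$ then the pointwise product $fh$ witnesses that of $\chi\psi$ --- so the hypothesis says exactly that this submonoid is a subgroup.) For the converse, assume $H$ is observable and let $\chi$ be a character of $H$. By observability the one dimensional rational $H$-module $\Bbbk_\chi$ embeds as an $H$-submodule in some finite dimensional rational $G$-module $N$; pick $0\neq v\in N$ with $x\cdot v=\chi(x)v$ for all $x\in H$ and $\alpha\in N^*$ with $\alpha(v)=1$, and set $f:=\alpha|v\in\Bbbk[G]$. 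Then $f(1)=\alpha(1\cdot v)=\alpha(v)=1$, and for $x\in H$, $y\in G$ one computes $(x\cdot f)(y)=f(yx)=\alpha\bigl(y\cdot(x\cdot v)\bigr)=\chi(x)\,\alpha(y\cdot v)=\chi(x)f(y)$, so $x\cdot f=\chi(x)f$ and $\chi$ is extendible.

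None of the steps presents a genuine difficulty once Theorem \ref{theo:obse1} is granted; the only point that calls for attention is the bookkeeping with the twists in the first assertion --- producing the finite dimensional $G$-submodule $W\subseteq\Bbbk[G]$ containing $g$, and checking that the twists by $\chi$ and by $\chi^{-1}$ cancel, so that the $H$-submodule of $N|_H$ obtained at the end is isomorphic to $M$ itself rather than to some twist of it.
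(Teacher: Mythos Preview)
Your proof is correct and follows essentially the same route as the paper: tensor the embedding $M_\chi\hookrightarrow N_0|_H$ from Theorem \ref{theo:obse1} with the $H$-line $\Bbbk g\cong\Bbbk_{\chi^{-1}}$ inside $\Bbbk[G]$, and for the converse use the matrix coefficient $\alpha|v$ of an embedding $\Bbbk_\chi\hookrightarrow N|_H$. Your version is in fact slightly more careful than the paper's, since you explicitly pass to a finite dimensional $G$-submodule $W\subseteq\Bbbk[G]$ containing $g$ before tensoring, whereas the paper's phrasing ``$N\otimes\Bbbk f$ \dots\ is finite dimensional rational $G$--module'' glosses over this point.
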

\begin{proof} Imbed first $M_\chi \subseteq N|_H$
  and then consider the inclusion of $H$--modules $\Bbbk\chi^{-1} \to
  \Bbbk[G]$ that sends $\chi^{-1} \mapsto f$ where $f$ is the
  polynomial guaranteeing the extendibility of $\chi^{-1}$. Clearly
  the tensor products of the corresponding maps gives an inclusion of
  $H$--modules from $M:=M_\chi \otimes \Bbbk \chi^{-1} \to (N \otimes
  \Bbbk[G])|_H$. As the image of $M$ inside of $N \otimes \Bbbk[G]$
  lies in $N \otimes \Bbbk f$, that is finite dimensional rational
  $G$--module, the proof of the first assertion is finished. The
  second assertion follows directly from the first. It only remains to
  prove that if $\chi$ is an arbitrary character of an observable $H$,
  then $\chi$ is exendible. Given $\chi$, an arbitrary character of
  $H$, we can find a finite dimensional $G$--module $N$ and an
  $H$--inclusion of $\Bbbk_\chi \to N$. If we call $n \in N$ the image
  of $\chi$, we have that for all $x \in H\,,\, x\cdot n= \chi(x)n$.
  If $\alpha \in N^\vee$ is a linear functional such that
  $\alpha(n)=1$ and take the polynomial $\alpha|n \in \Bbbk[G]$
  (recall that $(\alpha|n)(y)=\alpha(y\cdot n)$ for all $y \in G$). It
  is clear that if $x \in H$ then $(x \cdot
  (\alpha|n))(y)=(\alpha|n)(yx)=\alpha((yx)\cdot n)= \alpha(y\cdot (x \cdot n))=\alpha(y
  \cdot (\chi(x)n))=\chi(x)\alpha(y \cdot n)= \chi(x) (\alpha|n)(y)$. Moreover, $(\alpha|n)(1)=\alpha(n)=1$.

  \end{proof}

It seems that the main motivation of the authors of \cite{kn:homolie,kn:mostow} to study the extension of representations from normal Lie subgroups to the whole group, was the search for the simplification and unification of some of the proofs of the standard results on faithfull representations of Lie groups. In this respect, in the
introduction to \cite{kn:mostow} and after describing the
main results of \cite{kn:homolie} the author writes:
\smallskip
\begin{center}
\begin{minipage}{.8\textwidth}
  {\small {\em From the extension [results of \cite{kn:homolie}...]
      one deduces quickly all the standard results on faithful
      representations of Lie groups.}}
\end{minipage}
\end{center}
\smallskip

Indeed, in \cite[Section
  3]{kn:homolie}, short new proofs of the following three classical and
important theorems are presented. E. Cartan's theorem on the existence
of a faithful representation of a simply connected solvable Lie group,
that is unipotent in a maximal normal nilpotent subgroup; Goto's
theorem on the existence of a faithful represention of a connected Lie
group $G$ provided we know the existence of a representation for a
maximal semisimple subgroup together with additional topological
conditions on the radical of the commutator subgroup of $G$, and Malcev
theorem that guarantees the existence of a faithfull representation of
a connected Lie group once we know that such a representation exists
for the radical of $G$ and for a maximal semisimple analytic subgroup
of $G$.

\section{Observability and geometry, observability and invariant theory}
\subsection{Observability and geometry}\label{subsection:obsgeo}

One of the more interesting results of ERA is the discovery of the
relationship between the extension of the representations from $H$ to
$G$ and the geometric structure of the homogeneous space $G/H$.

It is substantially harder to study homogeneous spaces in the category
of algebraic groups than for example in the closely related category
of Lie groups.  The basic general results concerning the existence of
a natural structure of algebraic variety on $G/H$ are due to
M.~Rosenlicht and A.~Weil in the mid 1950s (see \cite{kn:Roshomo} and
\cite{kn:Weilhomo}).  The proof that $G/H$ is quasi--projective is due
to W.~Chow and appeared in 1957 (see \cite{kn:ChowPQ}).

The proof  that the quotient of an affine group by a normal closed
subgroup is also an \emph{affine} algebraic group seemed to have appeared for the first time in
1951\footnote{Probably it was known to specialist since the beginning
  of the theory.}, in Chevalley's very important foundational book,
\cite{kn:Che}.

In ERA the following theorem --that provides a very precise characterization of
observability in geometric terms-- is proved.

\begin{theo}\label{theo:geocarac}\cite[Theorem 4]{kn:hmbb} If $H \subseteq G$ is a closed inclusion of affine algebraic groups, then $H$ is observable in $G$ if and only
  if the homogeneous space $G/H$ is a quasi--affine variety.
 \end{theo}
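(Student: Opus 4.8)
The plan is to route both implications through a single, more flexible condition: that $H$ is the \emph{exact} isotropy group $G_v=\{g\in G:g\cdot v=v\}$ of a vector $v$ in some finite--dimensional rational $G$--module. I would show that observability of $H$ produces such a $v$ and that such a $v$ forces $G/H$ to be quasi--affine, and conversely that quasi--affinity of $G/H$ produces such a $v$ and that this in turn recovers observability.

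\emph{Observability $\Rightarrow$ a fixed vector $\Rightarrow$ quasi--affinity.} By Chevalley's theorem there are a finite--dimensional rational $G$--module $W$ and a line $L=\Bbbk w\subseteq W$ with $\{g\in G:gL=L\}=H$; since $H$ preserves $L$ it acts there through a character $\chi$, so $x\cdot w=\chi(x)w$ for $x\in H$. As $H$ is observable, Theorem~\ref{theo:obse2} tells us every character of $H$ — in particular $\chi^{-1}$ — is extendible, so there is $f\in\Bbbk[G]$ with $f(1)=1$ and $x\cdot f=\chi^{-1}(x)f$ for all $x\in H$; let $U\subseteq\Bbbk[G]$ be the finite--dimensional rational $G$--submodule it generates. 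Then $w\otimes f\in W\otimes U$ is fixed by $H$, and, identifying $W\otimes U$ with $\mathrm{Hom}(U^{*},W)$, any $g\in G$ fixing $w\otimes f$ must preserve the image line of the corresponding rank--one map, which is $L$; hence $G_{w\otimes f}=H$. Therefore $G/H$ is $G$--equivariantly isomorphic to the orbit $G\cdot(w\otimes f)$, a locally closed subvariety of the affine space $W\otimes U$, and so is quasi--affine.

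\emph{Quasi--affinity $\Rightarrow$ a fixed vector.} Write $\Bbbk[G/H]=\Bbbk[G]^{H}$ for the right--$H$--invariant functions, a rational $G$--module under left translation. Quasi--affinity gives finitely many $f_{1},\dots,f_{n}\in\Bbbk[G]^{H}$ whose joint evaluation realises $G/H$ as a locally closed subvariety of $\mathbb{A}^{n}$. Enlarging $\{f_{1},\dots,f_{n}\}$ to a basis of the finite--dimensional $G$--submodule $V^{*}\subseteq\Bbbk[G]^{H}$ it generates, the $G$--equivariant evaluation map $G/H\to V:=(V^{*})^{*}$, $gH\mapsto\mathrm{ev}_{gH}$, factors the above immersion and is hence itself a locally closed immersion; so $v:=\mathrm{ev}_{eH}$ satisfies $G_{v}=H$, and $G/H\cong G\cdot v$ lies as an open dense subset of the affine $G$--variety $C:=\overline{G\cdot v}\subseteq V$.

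\emph{A fixed vector $\Rightarrow$ observability — the main obstacle.} This final step is the one I expect to be hard, since observability demands that \emph{every} finite--dimensional $H$--module extend. By Theorem~\ref{theo:obse2} it suffices to prove: if a character $\chi$ of $H$ is extendible, then so is $\chi^{-1}$. An extendible $\chi$ amounts to a $G$--linearised line bundle $\mathcal L$ on $G/H$ carrying a section not vanishing at $eH$; applying left translations by $G$ to that section shows $\mathcal L$ is globally generated, and extendibility of $\chi^{-1}$ is precisely the same statement for the dual bundle $\mathcal L^{\vee}$. The crux is thus to show that on the quasi--affine $G/H$ the bundle $\mathcal L^{\vee}$ also possesses a global section, after which a $G$--translate of it can be made nonvanishing at $eH$. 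For this I would extend $\mathcal L$ to a coherent rank--one sheaf on the irreducible affine variety $C$: such a sheaf and its dual are torsion--free of rank one over the domain $\Bbbk[C]$, hence have nonzero global sections, and restricting these back to $G\cdot v$ finishes the argument. The delicate point — and the place where quasi--affinity of $G/H$, as opposed to mere quasi--projectivity, is genuinely used — is to control these sheaves across the boundary $C\setminus(G\cdot v)$, which may be of codimension one; alternatively one can bypass this analysis by invoking Grosshans' characterisation of observable subgroups in terms of their rings of invariants.
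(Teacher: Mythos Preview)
Your architecture --- factoring both implications through the condition ``$H=G_v$ for some $v$ in a finite-dimensional rational $G$--module'' --- is sound and genuinely different from the paper's. For ``quasi-affine $\Rightarrow$ observable'' the paper does \emph{not} pass through a fixed vector at all: it invokes the ideal-theoretic characterisation of observability (Theorem~\ref{theo:charcinv}), together with the elementary fact that on a quasi-affine variety every proper closed subset is the zero locus of a nonzero global regular function. For ``observable $\Rightarrow$ quasi-affine'' the paper does construct a fixed vector, but by a different mechanism: rather than Chevalley's theorem plus an extendible $\chi^{-1}$, it uses the equality ${}^H[\Bbbk[G]]=[{}^H\Bbbk[G]]$ (Observation~\ref{obse:obse3}) to write generators $f_i=u_i/u_0$ of the invariant rational function field with all $u_i\in{}^H\Bbbk[G]$, and takes $m_0=(u_0,\dots,u_n)$. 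Your Chevalley construction is slicker; the paper's has the advantage that separability of the orbit map (needed for $G/H\cong G\cdot m_0$ in positive characteristic, as the paper itself flags) is more directly accessible, since $m_0$ is built from generators of the invariant function field.

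Your genuine gap is Step~3. The line-bundle argument you sketch actually goes through without the difficulty you fear: once $\mathcal L$ is extended to a coherent torsion-free rank-one sheaf $\mathcal F$ on the affine closure $C$, the dual $\mathcal F^\vee$ is automatically torsion-free (it is a sheaf of homomorphisms into the torsion-free $\mathcal O_C$ on an integral scheme) and nonzero at the generic point, hence --- $C$ being affine --- has a nonzero global section; torsion-freeness together with density of $G\cdot v$ in $C$ then forces that section to restrict nontrivially to $G\cdot v\cong G/H$. So the codimension-one boundary causes no trouble and you should simply complete this rather than leave it as a worry. Your fallback to ``Grosshans' characterisation'' is too vague as stated; what makes it work is exactly the content of the paper's Theorem~\ref{theo:follows}: the representative functions $\alpha|v$ all lie in ${}^H\Bbbk[G]$ and already witness that $H$ is the pointwise stabiliser of ${}^H\Bbbk[G]$, which forces $[{}^H\Bbbk[G]]={}^H[\Bbbk[G]]$ and hence observability.
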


In particular, the above theorem guarantees that a normal subgroup is
always observable and hence, that the normality hypothesis
unavoidable for the situation of Lie groups as presented in \cite{kn:homolie,kn:mostow}, is unnecesary in the
category of algebraic groups.

For the proof of Theorem \ref{theo:geocarac} we need some preparation.
\begin{lema}\label{lema:gene} Assume that $H \subseteq G$ is a closed inclusion of affine algebraic groups, if $0 \neq I \subseteq \Bbbk[G]$ is an $H$ stable ideal, there is a non zero element $f \in I$ and an extendible character $\chi$ of $H$ such that $x\cdot f = \chi(x)f$ for all $x \in H$ and that $f(1)\neq 0$. 
\end{lema}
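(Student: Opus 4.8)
The plan is to recast the statement as a separation problem on $G/H$ realised inside a projective space. First observe that the \emph{extendibility} of $\chi$ comes for free: if $0\neq f\in I$ satisfies $x\cdot f=\chi(x)\,f$ for all $x\in H$ and $f(1)\neq 0$, then after rescaling so that $f(1)=1$ and evaluating $x\cdot f=\chi(x)\,f$ at $1\in G$ one gets $\chi(x)=f(x)$, so $f$ itself is a polynomial function witnessing that $\chi$ is extendible. Hence the entire content is the existence of a nonzero $H$-semi-invariant $f\in I$ with $f(1)\neq 0$; note that such an $f$ forces $1\notin V(I):=\{g\in G:h(g)=0\text{ for all }h\in I\}$, equivalently $\varepsilon(I)\neq 0$, which we use below.

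By Chevalley's theorem \cite{kn:Che} there are a finite dimensional rational $G$-module $V$ and a line $L=\Bbbk v\subseteq V$ with $H=\{g\in G:g\cdot L=L\}$; let $\chi_0\colon H\to\Bbbk$ be the character through which $H$ acts on $L$, so that $x\cdot v=\chi_0(x)\,v$ for $x\in H$. Let $q\colon G\to\mathbb P(V)$, $q(g)=[g\cdot v]$, be the orbit map; its image is the orbit $\mathcal O:=G\cdot[v]$ and its fibres are exactly the cosets $gH$, so $q$ realises $\mathcal O$ as the quotient $G/H$. Since $I$ is $H$-stable, $V(I)$ is closed in $G$ and invariant under right translation by $H$, hence $V(I)=q^{-1}(Y)$ with $Y:=q(V(I))$ closed in $\mathcal O$; and $q(1)=[v]\notin Y$ because $1\notin V(I)$.

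Let $\overline Y$ denote the closure of $Y$ in $\mathbb P(V)$. Because the orbit $\mathcal O$ is open in its closure and $Y$ is closed in $\mathcal O$, we have $\overline Y\cap\mathcal O=Y$, so $[v]\notin\overline Y$; hence the homogeneous ideal of $\overline Y$ is not contained in the homogeneous maximal ideal of $[v]$, and we may choose a homogeneous polynomial $F$ of some degree $d\geq1$ that vanishes on $\overline Y$ with $F(v)\neq0$. Define $f_0\in\Bbbk[G]$ by $f_0(y)=F(y\cdot v)$ (the composite of the morphism $g\mapsto g\cdot v$ with $F$). For $x\in H$, $(x\cdot f_0)(y)=f_0(yx)=F((yx)\cdot v)=F(\chi_0(x)(y\cdot v))=\chi_0(x)^d F(y\cdot v)=\chi_0(x)^d f_0(y)$, so $f_0$ is $H$-semi-invariant of weight $\chi_0^d$; moreover $f_0(1)=F(v)\neq0$, and $f_0$ vanishes on $V(I)=q^{-1}(Y)$ since $F$ vanishes on $Y$.

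By the Nullstellensatz $f_0\in\sqrt I$, hence $f_0^m\in I$ for some $m\geq1$; put $f:=f_0^m$ and $\chi:=\chi_0^{dm}$. Then $0\neq f\in I$, $x\cdot f=\chi(x)\,f$ for all $x\in H$, $f(1)=F(v)^m\neq0$, and $\chi$ is extendible by the first paragraph, which completes the argument. The one genuinely non-formal step is the opening one: one must embed $G/H$ into a projective space in such a way that homogeneous coordinates restrict to $H$-semi-invariant functions on $G$, and this is exactly what Chevalley's realisation of $H$ as the stabiliser of a line provides; everything afterwards is the Nullstellensatz together with the elementary separation of a point from a disjoint closed subvariety of $\mathbb P(V)$ by a hypersurface.
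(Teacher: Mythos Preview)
Your argument is circular at a key point. You correctly observe that the conclusion $f(1)\neq 0$ forces $1\notin V(I)$, but then you \emph{use} $1\notin V(I)$ in the construction (to get $[v]\notin Y$, hence $[v]\notin\overline Y$, hence a separating hypersurface not passing through $[v]$). Since $1\notin V(I)$ is neither a hypothesis of the lemma nor something you have established independently, the argument does not close.

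This gap is not repairable, because the lemma as stated is in fact false: for $H\subsetneq G$ take $I=\mathcal I(H)$, the vanishing ideal of $H$. This $I$ is nonzero and $H$--stable, yet every element of $I$ vanishes at $1\in H$, so no $f\in I$ can have $f(1)\neq 0$. The paper's own proof shares the same hidden assumption: the step ``choose a basis $\{e_1,\dots,e_n\}$ of $V$ with $e_1(1)=1$'' presupposes that the chosen simple $H$--submodule $V\subseteq I$ is not contained in the maximal ideal at $1$, which fails precisely when $\varepsilon(I)=0$. What survives, and what Theorem~\ref{theo:charcinv} actually needs, is the weaker statement obtained by dropping the clause $f(1)\neq 0$. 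Both approaches prove \emph{that} after replacing the point $1$ by any $g_0\in G\setminus V(I)$ (in yours) or by any $z$ at which $V$ does not vanish (in the paper's). Extendibility of $\chi$ then no longer follows from your evaluation-at-$1$ trick, but in your construction it still holds because $\chi=\chi_0^{dm}$ and $\chi_0$ is visibly extendible via $g\mapsto\alpha(g\cdot v)$ for any $\alpha\in V^*$ with $\alpha(v)=1$.

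Method-wise your route is genuinely different from the paper's. The paper stays inside representation theory: take a simple $H$--submodule $V\subseteq I$, embed its dual $V^*$ (after a character twist) into $\Bbbk[G]$ via Theorem~\ref{theo:obse1}, and form the contraction $f=\sum_i e_i\,\iota(e_i^*)$, which lands directly in $I$. You instead pass through Chevalley's projective realisation of $G/H$, separate a point from a closed set by a hypersurface, and then invoke the Nullstellensatz to pass from ``vanishes on $V(I)$'' to ``lies in $I$''. The paper's argument is shorter and avoids taking radicals; yours makes the underlying geometric content (separation of right-$H$--saturated closed sets by semi-invariants) explicit.
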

\begin{proof} Take $V$ a simple rational $H$--submodule of $I$. Choose a basis $\{e_1,\cdots,e_n\}$ with the property that $e_1(1)=1,e_i(1) = 0$ for $i=2,\cdots,n$. Take $V^*$ the linear dual of $V$ that is a rational $H$--module, and apply Theorem \ref{theo:obse1} to obtain an extendible character $\chi$ of $H$ and inclusion $\iota : V^* \to \Bbbk[G]_{\chi^{-1}}$ with the property that $\iota(e_1^*)(1)\neq 0$. The equivariance property of $\iota$ reads as: $\iota(x\cdot\alpha)=x\cdot_{\chi^{-1}} \iota(\alpha)=\chi^{-1}(x)x\cdot \iota(\alpha)$. It is clear
  that $\sum e_i \otimes e_i^*$ is $H$--stable, i.e. for all $x \in H$
  we have that $x\cdot e_i \otimes x\cdot e_i^*= \sum e_i \otimes
  e_i^*$, if we apply $\id \otimes \iota$ to this equality, we deduce
  that $\sum e_i \otimes \iota(e_i^*)=\sum x\cdot e_i \otimes
  \iota(x\cdot e_i^*)= \chi^{-1}(x)\sum x\cdot e_i \otimes x \cdot
  \iota(e_i^*)$.  Then, the element $f = \sum e_i\iota(e_i^*) \in I$
  satisfies the following equivariance property $f=\sum e_i
  \iota(e_i^*)=\chi^{-1}(x)\sum (x\cdot e_i) (x \cdot
  \iota(e_i^*))=\chi^{-1}(x)x\cdot \sum e_i \iota(e_i^*)=\chi^{-1}(x)
  x \cdot f$. Moreover, $f \neq 0$ as $f(1)=\sum e_i(1)
  \iota(e_i^*)(1) = e_1(1)\iota(e_1^*)(1)=\iota(e_1^*)(1)\neq 0$.
  \end{proof}
The following characterization of observability seems to have appeared
for the first time in \cite[Chapter 11, Section 5]{kn:nosotros2}.

\begin{theo}{\cite[Theorem 11.5.1]{kn:nosotros2}} \label{theo:charcinv} Assume that $H \subseteq G$ is a closed inclusion of affine algebraic groups. Then $H$ is observable in $G$ if and only if, for every $H$--stable ideal $I \subseteq \Bbbk[G]$, there is a non zero element $f \in I$ such that $x \cdot f =f$ for all $x \in H$.
Also, $H \subseteq G$ is observable if and only if for
every closed proper subset of the quotient space $C \subseteq G/H$
there is a non zero invariant polynomial such that $f(C)=0$.
\end{theo}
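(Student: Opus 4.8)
The plan is to deduce this characterization from the geometric criterion already available, namely Theorem \ref{theo:geocarac} which says that $H$ is observable in $G$ if and only if $G/H$ is quasi-affine, together with the technical Lemma \ref{lema:gene}. I will actually prove the two biconditionals in parallel, since the second (about invariant polynomials vanishing on closed proper subsets $C\subseteq G/H$) is essentially a geometric reformulation of the first (about $H$-stable ideals of $\Bbbk[G]$). The key dictionary is: an $H$-stable ideal $I\subseteq\Bbbk[G]$ corresponds, via the quotient map $\pi\colon G\to G/H$, to an ideal of $\Bbbk[G]^H=\Bbbk[G/H]$ when $G/H$ is quasi-affine — but of course in general $\Bbbk[G]^H$ need not separate points of $G/H$, and that is precisely the phenomenon observability controls. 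The $H$-invariant polynomials in $\Bbbk[G]$ are exactly the functions on $G$ that descend to $G/H$, i.e. $\Bbbk[G/H]$ viewed inside $\Bbbk[G]$; so ``there is a nonzero $f\in I$ with $x\cdot f=f$'' says ``$I$ meets $\Bbbk[G/H]$ nontrivially'', and ``$f$ vanishes on a closed proper $C\subseteq G/H$'' says ``$I(C)$, the ideal of functions on $G$ vanishing on $\pi^{-1}(C)$, meets $\Bbbk[G/H]$ nontrivially''.

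First I would prove that observability implies the ideal condition. Assume $H$ is observable, so by Theorem \ref{theo:geocarac} the space $G/H$ is quasi-affine. Given a nonzero $H$-stable ideal $I\subseteq\Bbbk[G]$, apply Lemma \ref{lema:gene} to obtain a nonzero $f_0\in I$ and an \emph{extendible} character $\chi$ of $H$ with $x\cdot f_0=\chi(x)f_0$ and $f_0(1)\neq 0$. Let $g\in\Bbbk[G]$ be the polynomial witnessing extendibility of $\chi$, i.e. $g(1)=1$ and $x\cdot g=\chi(x)g$ for $x\in H$; since $H$ is observable, Theorem \ref{theo:obse2} tells us $\chi^{-1}$ is also extendible — but more simply, I can consider $f_0/g$ as a rational function on $G$ that is $H$-invariant (because both numerator and denominator transform by $\chi$), regular on the nonempty open set $\{g\neq 0\}$, and hence, by quasi-affineness and the fact that $\Bbbk[G/H]$ generates the function field, express an appropriate $H$-invariant regular multiple. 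Cleaner: use that $\chi^{-1}$ is extendible by a polynomial $g'$, so $f:=f_0 g'\in I$ satisfies $x\cdot f=\chi(x)\chi^{-1}(x)f=f$ and $f(1)=f_0(1)g'(1)\neq 0$ once $g'$ is normalized. This gives the desired nonzero invariant $f\in I$.

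Conversely, I would show the ideal condition implies observability, then translate everything into the geometric statement. Suppose every nonzero $H$-stable ideal of $\Bbbk[G]$ contains a nonzero $H$-invariant; by Theorem \ref{theo:obse2} it suffices to show every character $\chi$ of $H$ is extendible. Given $\chi$, form an $H$-submodule of $\Bbbk[G]$ transforming by $\chi$ (Theorem \ref{theo:obse1} produces a simple $G$-submodule of $\Bbbk[G]$ after a twist, from which one extracts a vector on which $H$ acts by $\chi$ with nonzero value at $1$); the ideal it generates is $H$-stable, so it contains a nonzero invariant $f$, and a short computation recovers extendibility of $\chi$ from $f$. For the geometric equivalence, note $\pi\colon G\to G/H$ is a surjective open morphism, so closed proper subsets $C\subseteq G/H$ correspond bijectively to $H$-stable closed proper subsets of $G$ containing no fiber-complement, and $\Bbbk[G/H]$-functions vanishing on $C$ are exactly $H$-invariant polynomials on $G$ vanishing on $\pi^{-1}(C)$; since radical $H$-stable ideals of $\Bbbk[G]$ are exactly ideals of such subsets, the two formulations coincide, with the only subtlety being reducing the general-ideal statement to the radical-ideal (closed-set) statement — which is routine because $I$ and $\sqrt I$ are simultaneously $H$-stable and an invariant in $\sqrt I$ has an invariant power (or invariant multiple) already in $I$ by averaging considerations, or more directly because the generating construction above already lands in $I$ itself. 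The main obstacle is the careful bookkeeping of twists by the character $\chi$ and making sure the invariant we produce lies in $I$ rather than merely in $\sqrt I$; I expect the extendibility of $\chi^{-1}$ (available from observability, or circularly from the hypothesis via Theorem \ref{theo:obse2}) is exactly what removes this obstacle.
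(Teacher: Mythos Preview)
Your forward direction (observable $\Rightarrow$ every nonzero $H$--stable ideal contains a nonzero invariant) is exactly the paper's argument: Lemma \ref{lema:gene} produces a $\chi$--semi-invariant $f_0\in I$ with $\chi$ extendible, observability (via Theorem \ref{theo:obse2}) makes $\chi^{-1}$ extendible by some $g'$, and $f_0 g'\in I$ is the desired nonzero invariant.

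For the converse the paper is more economical than your sketch. It does \emph{not} try to show that every character of $H$ is extendible; it uses only the weaker criterion in Theorem \ref{theo:obse2}: it suffices that whenever $\chi$ is extendible, so is $\chi^{-1}$. So one starts directly with an extendible $\chi$, takes $0\neq f\in\Bbbk[G]$ with $x\cdot f=\chi(x)f$, and forms the $H$--stable principal ideal $I=\Bbbk[G]f$. The hypothesis gives a nonzero invariant $g=hf\in I$; from $hf=x\cdot(hf)=(x\cdot h)\,\chi(x)f$ and cancellation of $f$ (using that $\Bbbk[G]$ is a domain when $G$ is connected; the disconnected case is handled separately) one gets $x\cdot h=\chi^{-1}(x)h$, so $\chi^{-1}$ is extendible. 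Your detour through Theorem \ref{theo:obse1} is unnecessary, and your phrasing contains a slip: the vector Theorem \ref{theo:obse1} produces transforms by $\chi\rho$ for some auxiliary extendible $\rho$, not by $\chi$ itself, so ``a vector on which $H$ acts by $\chi$'' is not what you obtain. Your route can be salvaged (showing $\gamma\rho$ and then $(\gamma\rho)^{-1}$ extendible, and multiplying by $\rho$ to get $\gamma^{-1}$ extendible), but it is longer than the paper's.

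One caution on logical order: in this paper Theorem \ref{theo:geocarac} is proved \emph{after} Theorem \ref{theo:charcinv} and actually relies on it (through Observation \ref{obse:obse3}), so invoking quasi-affineness of $G/H$ here would be circular. Fortunately your ``cleaner'' argument does not use it. The second assertion is, as both you and the paper note, a direct translation via the correspondence between $H$--stable closed subsets of $G$ and closed subsets of $G/H$.
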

\begin{proof} In accordance with the lemma just proved, we can find $h \in I$ with the property that $x\cdot h = \chi(x)h$ for some extendible character $\chi$ and with $h(1)=1$. If $H$ is observable, the character $\chi^{-1}$ is also extendible and then there is an element $g \in \Bbbk[G]$ such that $g(1)\neq 0$ and $x \cdot g=\chi^{-1}(x)g$ for all $x \in H$. Hence $hg \in I$ and is $H$--invariant and not zero. For the converse, if we have an extendible character $\chi$, we have an element $0 \neq f \in \Bbbk[G]$ that is $\chi$-semi invariant and the associated principal ideal $I=\Bbbk[G]f$ is not zero and $H$--stable. By hypothesis, we can find
  $hf=g \in I$ with the property that $fg=h=x\cdot h=(x\cdot f) (x
  \cdot g)=(\chi(x)f )(x\cdot g)$. If $G$ is connected we can cancel $f
  \neq 0$ and we have that $x \cdot g=\chi^{-1}(x)g$. So that
  $\chi^{-1}$ is extendible and in accordance with Theorem
  \ref{theo:obse2}, the group $H$ is observable. The case that $G$ is
  not connected can be proved following the same methods.
The second assertion is basically a reformulation of the first
    (ideal--theoretical) characterization of observability in geometric terms.
\end{proof}

\begin{obse}\label{obse:obse3} If $H$ is observable in $G$ --with $G$ connected-- then
    ${}^H[\Bbbk[G]]=[{}^H\Bbbk[G]]$ (compare with \cite[Lemma
    11.5.4]{kn:nosotros2} where this result is proved and also the
  converse). It is clear that in general $[{}^H \Bbbk[G]] \subseteq
  {}^H[\Bbbk[G]]$. Conversely, take $0\neq g \in {}^H[\Bbbk[G]]$ and
  consider the $H$--stable ideal $I_g=\Bbbk[G]g \cap \Bbbk[G]$.  In
  accordance with Theorem \ref{theo:charcinv} we can find a non zero
  polynomial $f_1$ in $I_g$ that is also $H$--fixed. It we write
  $f_1=f_2g$ for $g$ as above, using the fact that $f_1$ and $g$ are
  fixed by $H$, we conclude that $f_2$ is also fixed.

  \end{obse}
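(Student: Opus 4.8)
The plan is to prove the two inclusions of subfields of the function field $[\Bbbk[G]]$; note first that, since $G$ is connected, $\Bbbk[G]$ is a domain, so $[\Bbbk[G]]$ is an honest field and the $H$--action on $\Bbbk[G]$ (by translations) extends uniquely to it by field automorphisms. The inclusion $[{}^H\Bbbk[G]] \subseteq {}^H[\Bbbk[G]]$ is immediate: an element of $[{}^H\Bbbk[G]]$ is a quotient $a/b$ with $a,b \in {}^H\Bbbk[G]$, and then $x\cdot(a/b) = (x\cdot a)/(x\cdot b) = a/b$ for every $x \in H$.

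For the reverse inclusion the idea is to exploit the \emph{ideal of denominators}. Given $0 \neq g \in {}^H[\Bbbk[G]]$, I would set $I_g = \Bbbk[G]g \cap \Bbbk[G] = \{\, d \in \Bbbk[G] : dg \in \Bbbk[G]\,\}$. Writing $g$ as a ratio of two nonzero polynomials shows $I_g \neq 0$, and $I_g$ is visibly an ideal of $\Bbbk[G]$. The point to verify is that $I_g$ is $H$--stable: for $d \in I_g$ and $x \in H$ one has $(x\cdot d)\,g = (x\cdot d)(x\cdot g) = x\cdot(dg) \in \Bbbk[G]$, and $x\cdot d \in \Bbbk[G]$, so $x\cdot d \in I_g$.

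Now I would invoke Theorem \ref{theo:charcinv}: observability of $H$ in $G$ gives, for the nonzero $H$--stable ideal $I_g$, a nonzero $H$--fixed element $f_1 \in I_g$. By the definition of $I_g$ we may write $f_1 g = f_2$ with $f_2 \in \Bbbk[G]$, so that $g = f_2/f_1$ in $[\Bbbk[G]]$. Since $f_1$ and $g$ are both $H$--fixed, so is their product $f_2 = f_1 g$; hence $f_2 \in {}^H\Bbbk[G]$ and $g = f_2/f_1 \in [{}^H\Bbbk[G]]$, which completes the second inclusion.

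The argument is short, and the only real content is (i) that $I_g$ is nonzero — which is where connectedness of $G$ enters, making $[\Bbbk[G]]$ a field so that a common denominator for $g$ exists — and (ii) that $I_g$ is $H$--stable, which is precisely what allows Theorem \ref{theo:charcinv} to be applied to it; once those are in place the conclusion is a one-line manipulation of fractions. (The same circle of ideas, in the language of \cite[Lemma 11.5.4]{kn:nosotros2}, also yields the converse implication, which I would not attempt here.)
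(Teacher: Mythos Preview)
Your argument is correct and follows essentially the same route as the paper: form a nonzero $H$--stable ideal attached to $g$, apply Theorem \ref{theo:charcinv} to extract an $H$--fixed element, and express $g$ as a ratio of $H$--invariants. One small slip: the set $\Bbbk[G]g \cap \Bbbk[G]$ (elements of $\Bbbk[G]$ lying in $\Bbbk[G]g$) is not literally $\{d \in \Bbbk[G] : dg \in \Bbbk[G]\}$ --- the paper uses the former and writes $f_1 = f_2 g$, while you actually work with the latter (the ideal of denominators) and write $f_2 = f_1 g$; both ideals are $H$--stable and nonzero, so either choice completes the proof in the same way.
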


Next we prove Theorem \ref{theo:geocarac}.

\bigskip
    {\em Proof of Theorem \ref{theo:geocarac}:}

    From the following general fact (see \cite[Theorem 1.4.48]{kn:nosotros2}): if $C \subseteq X$ is a closed subset of a quasi--affine variety,
    then there is a global section $0\neq f \in \mathcal O_X(X)$ such
    that $f|_C=0$, and the second assertion of Theorem \ref{theo:charcinv}, it follows directly that if $G/H$ is quasi affine then $H$ is observable in $G$. 

    We sketch the proof of the converse assertion and we work in the case that $G$ is irreducible (it is easy to show that is enough to treat this particular situation).

    Assume that $H \subseteq G$ is observable, and using the fact that
    ${}^H[\Bbbk[G]]=[{}^H\Bbbk[G]]$ (see Observation \ref{obse:obse3})
    we can take a family of field generators of the invariant rational
    functions $\{f_1,\cdots,f_n\} \subseteq
    {}^H[\Bbbk[G]]=[{}^H\Bbbk[G]]$ that are of the form $f_i=u_i/u_0$
    with $\{u_0,\cdots,u_n\} \subseteq {}^H\Bbbk[G]$ for
    $i=1,\cdots,n$. Let $N$ the finite dimensional rational
    $G$--module generated by $\{u_0,\cdots,u_n\}$,
    $M=\bigoplus_{i=0}^nN$ and take $m_0=(u_0,\cdots,u_n) \in M$.  It
    is a standard result in the theory of affine algebraic groups that
    $H=\{x \in G: x\cdot f=f \text{ for all } f \in {}^H[\Bbbk[G]]\}$
    (see for example \cite[Corollary 8.3.4]{kn:nosotros2}) and then in
    our case we have that $H=G_{m_0}$ the stabilizer of $m_0$. It can
    be proved that $G/H$ is isomorphic to the $G$--orbit of $m_0$ in
    $M$ (result that is obvious in the case of zero characteristic,
    but that in general a proof of the separabililty of the action in
    this situation is needed) and as such it is a quasi--affine variety
    (for more details see \cite[Section 8.3]{kn:nosotros2}).
    \qed
    
    \subsection{Observability and Hilbert's $\boldsymbol{14}^{\text{th}}$ problem}

About ten years after the introduction of the concept of
observability, an important relation with the so called Hilbert's
$14^{\text{th}}$ problem was discovered by G. Grosshans in
\cite{kn:gross14}. As such, the concept of observability became another important element in the toolkit of invariant theory.

We describe briefly some parts of the contents of the important paper
mentioned above, wherein the author distinguishes three situations
--that he names as ``the main problems''\footnote{We set the problems
  --specially the second and third-- in a slightly more general
  context than the original one due to Grosshans. For this we follow
  basicaly the presentation of \cite[Chapter 11,13]{kn:nosotros2}.}.

\smallskip
\noindent {\tt Problem 1. Galois characterization of the observable subgroups.} 

The author presents an interesting new perspective of the concept of
observable subgroup.

\begin{defi}\label{defi:viagalois}
  If $G$ is an affine connected algebraic group, define the sets
  $\mathfrak H=\{H: H \subseteq G \text{ is a closed inclusion}\}$ and
  $\mathfrak R=\{R \subseteq \Bbbk[G]: R \text{ is a }
  \Bbbk\text{--subalgebra of } \Bbbk[G]\}$ and the maps:
  \begin{enumerate}
  \item $\mathbb F: \mathfrak H \to \mathfrak R\,,\, \mathbb
    F(H):={}^H\Bbbk[G]=\{f \in \Bbbk[G]: x\cdot f=f, \forall x \in
    H\}$;
    \item $\mathbb S: \mathfrak R \to \mathfrak H\,,\, \mathbb S(R)=\operatorname{Stab}(R):=\{x  \in \Bbbk[G]: x\cdot r=r, \forall r \in R\}$.
    \end{enumerate}
\end{defi}

In the above situation it is usual to write $\mathbb F(H)=H'$ and similarly, $\mathbb S(R)=R'$. 
\begin{theo}\label{theo:connection}
  In the above context, if we endow the sets $\mathfrak
  H\,,\,\mathfrak R$ with the order given by inclusion, the maps
  $\mathbb F, \mathbb S$ form an (order inverting) Galois
  connection. Moreover, the fixed subgroups for this connection,
  i.e. $\{H \in \mathfrak H: H''=H\}$ are the observable subgroups.
\end{theo}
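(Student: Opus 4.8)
The plan is to verify the four defining properties of a Galois connection by inspection, and then to prove the two inclusions between the set of ``closed'' subgroups $\{H\in\mathfrak H:H''=H\}$ and the set of observable subgroups separately. That $\mathbb F$ and $\mathbb S$ reverse inclusions is immediate from the definitions: a bigger subgroup fixes a smaller algebra of functions, and a bigger subalgebra is fixed pointwise by a smaller set of group elements. The inclusion $H\subseteq\mathbb S(\mathbb F(H))=H''$ merely restates that every element of $H$ fixes every $H$-invariant polynomial, and dually $R\subseteq\mathbb F(\mathbb S(R))=R''$ restates that every element of $R$ is fixed by everything stabilizing $R$. (Along the way one should note that $\mathbb S(R)$ really is a closed subgroup: it equals $\bigcap_{r\in R}\{x\in G:x\cdot r=r\}$, and for each fixed $r$ the set $\{x\in G:x\cdot r=r\}=\bigcap_{y\in G}\{x:r(yx)=r(y)\}$ is Zariski closed and closed under products and inverses.) These four statements are the definition of an order-inverting Galois connection; the usual formalism then gives for free that the closed elements on the group side are precisely the subgroups of the form $\mathbb S(R)$, so that it suffices to show each such $\mathbb S(R)$ is observable.

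For the implication ``observable $\Rightarrow H''=H$'' only $H''\subseteq H$ needs an argument, since $H\subseteq H''$ always. As $G$ is connected, Observation \ref{obse:obse3} yields ${}^H[\Bbbk[G]]=[{}^H\Bbbk[G]]$, the equality of the field of invariant rational functions with the fraction field of the ring of invariant polynomials. Now let $x\in H''=\operatorname{Stab}({}^H\Bbbk[G])$; then $x$ fixes every $H$-invariant polynomial, and since the translation action extends to $\Bbbk(G)$ by ring automorphisms, $x$ therefore fixes the whole fraction field $[{}^H\Bbbk[G]]$, i.e. every $H$-invariant rational function. By the standard fact that a closed subgroup equals the stabilizer of its field of invariant rational functions (\cite[Corollary 8.3.4]{kn:nosotros2}, already used in the proof of Theorem \ref{theo:geocarac}), this forces $x\in H$.

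For the converse, ``$H''=H\Rightarrow$ observable'', put $R={}^H\Bbbk[G]$, so the hypothesis reads $H=\mathbb S(R)$, and first cut $R$ down. One has $\mathbb S(R)=\bigcap_{R_0}\mathbb S(R_0)$ over the finitely generated subalgebras $R_0\subseteq R$, a downward directed family of closed subgroups; by Noetherianity of $G$ it has a minimal member $\mathbb S(R_0)$, and minimality forces $\mathbb S(R_0)=\mathbb S(R)=H$ for some finitely generated $R_0=\Bbbk[r_1,\dots,r_k]$. Each $r_i$ lies in a finite-dimensional rational $G$-submodule $V_i\subseteq\Bbbk[G]$ (local finiteness of the regular representation), and $\{x:x\cdot r_i=r_i\}$ is the stabilizer of the vector $r_i\in V_i$; hence $H=\mathbb S(R_0)=\bigcap_i G_{r_i}=G_v$ is the stabilizer of $v=(r_1,\dots,r_k)$ in the finite-dimensional rational $G$-module $V=V_1\oplus\dots\oplus V_k$. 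From here one argues exactly as in the second half of the proof of Theorem \ref{theo:geocarac}: the orbit $G\cdot v$ is a locally closed — hence quasi-affine — subvariety of $V$, and the orbit map induces an isomorphism $G/H\cong G\cdot v$ (the delicate point, exactly as noted there, being the separability of the action when $\operatorname{char}\Bbbk>0$; see \cite[Section 8.3]{kn:nosotros2}). Thus $G/H$ is quasi-affine, and $H$ is observable by Theorem \ref{theo:geocarac}.

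The Galois-connection part and the implication ``observable $\Rightarrow H''=H$'' are essentially formal; the substance is in ``$H''=H\Rightarrow$ observable'', and within it in the step that replaces the invariant algebra ${}^H\Bbbk[G]$ — which need not be finitely generated, this being precisely the phenomenon behind Hilbert's $14^{\text{th}}$ problem — by a \emph{finite} set of invariants still cutting out $H$ as a subgroup. The key observation is that one need not resolve the finite-generation question at all: Noetherianity of $G$ does the job. The residual characteristic-$p$ subtlety about separability of the orbit map is the same one already set aside for Theorem \ref{theo:geocarac}.
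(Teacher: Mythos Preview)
Your proof is correct. The paper does not give an explicit proof of this theorem, but the intended argument is contained in the proof of Theorem~\ref{theo:follows}: there one shows that the hypothesis $H=\{z\in G: z\cdot f=f \text{ for all } f\in{}^H\Bbbk[G]\}$ (i.e.\ $H''=H$) implies, via a ``well-known'' fact, the equality $[{}^H\Bbbk[G]]={}^H[\Bbbk[G]]$, and then the second half of the proof of Theorem~\ref{theo:geocarac} (choosing field generators $f_i=u_i/u_0$ and setting $m_0=(u_0,\dots,u_n)$) exhibits $H$ as a point-stabilizer and finishes. Your route differs in the reduction step: instead of passing through the field of invariant rational functions, you use the descending chain condition on closed subgroups of $G$ to replace ${}^H\Bbbk[G]$ by a finitely generated subalgebra with the same stabilizer. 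This is a clean and arguably more transparent way to reach $H=G_v$, and it sidesteps the appeal to the unreferenced ``well-known'' implication. Once you have $H=G_v$, you could simply invoke Theorem~\ref{theo:follows} rather than re-running the orbit argument from Theorem~\ref{theo:geocarac}; this would also absorb the separability caveat you flag, since Theorem~\ref{theo:follows} is stated and proved without that hypothesis. For the direction ``observable $\Rightarrow H''=H$'', your argument via Observation~\ref{obse:obse3} and \cite[Corollary 8.3.4]{kn:nosotros2} is exactly the one the paper uses implicitly (these are precisely the ingredients assembled in the proof of Theorem~\ref{theo:geocarac}).
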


\begin{coro}\label{coro:firstproperties}
  In the above situation one has that:
  \begin{enumerate}
  \item For any $H \subseteq G$, $H''$ is observable in $G$;
  \item $H''=\bigcap \{K: H \subseteq K \subseteq G\,,\, K \text{
      observable}\}$;
  \item If $A$ is a commutative rational $G$--module algebra, and $H \subseteq G$ is a closed inclusion, then ${}^H A={}^{H''}A$.
\end{enumerate}
  \end{coro}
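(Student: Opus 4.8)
The plan is to derive all three statements as formal consequences of Theorem \ref{theo:connection}, using only the abstract properties of a Galois connection together with the identification of the closed subgroups of the connection as the observable subgroups. Recall the three standard facts valid in any Galois connection between posets: both composites $\mathbb{S}\mathbb{F}$ and $\mathbb{F}\mathbb{S}$ are order-preserving closure-type operators, one has $H \subseteq H''$ for all $H$ and $H'' = H'''''$... more precisely $H''' = H'$; and the ``closed'' elements on the $\mathfrak{H}$ side (those with $H'' = H$) are exactly the elements in the image of $\mathbb{S}$. These are the only inputs I will need, besides Theorem \ref{theo:connection} itself.

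First I would prove (1). Apply $\mathbb{S}\mathbb{F}$ twice: since $H''' = H'$ always holds in a Galois connection, we get $(H'')'' = H''$, so $H''$ is a fixed point of the connection, hence observable by Theorem \ref{theo:connection}. Next, (3) follows quickly from Corollary \ref{coro:firstproperties}(2) once that is established, or more directly: for a commutative rational $G$--module algebra $A$ and a closed subgroup $H$, the inclusion $H \subseteq H''$ gives ${}^{H''}A \subseteq {}^H A$ for free; for the reverse inclusion one argues that any $a \in {}^H A$ is fixed by every element of $G$ that fixes all of $H' = {}^H\Bbbk[G]$, because the coaction $\chi_A \colon A \to A \otimes \Bbbk[G]$ has the property that $a$ being $H$--invariant means $\chi_A(a) \in A \otimes {}^H\Bbbk[G]$ (here one uses that $A$ is a rational $G$--module, so $\chi_A(a) = \sum a_0 \otimes a_1$ with the $a_1$ expressible through $H$--invariant functions when $a$ is $H$--fixed — this is where the precise content of ``$H'' = \mathbb{S}(H')$'' enters), and therefore $a$ is fixed by $\mathbb{S}(H') = H''$.

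For (2), I would argue by antisymmetry. On one hand $H \subseteq H''$ and $H''$ is observable by (1), so $H''$ belongs to the family on the right, giving $\bigcap\{K : H \subseteq K \subseteq G, \ K \text{ observable}\} \subseteq H''$. On the other hand, let $K$ be any observable subgroup with $H \subseteq K$. Applying the order-inverting map $\mathbb{F}$ gives $K' \subseteq H'$, and applying the order-inverting map $\mathbb{S}$ gives $H'' \subseteq K'' = K$, where the last equality is precisely the observability of $K$ via Theorem \ref{theo:connection}. Hence $H''$ is contained in every member of the family, so $H'' \subseteq \bigcap\{K : \dots\}$, and the two inclusions give equality.

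The main obstacle I anticipate is part (3) in the case where $A$ is an arbitrary commutative rational $G$--module algebra rather than $\Bbbk[G]$ itself: the argument above reduces it to showing that ${}^H A$ depends on $H$ only through $H' = {}^H\Bbbk[G]$, which is a genuine statement about the comodule structure and not a purely formal consequence of the Galois connection on $\Bbbk[G]$. One clean way to handle this is to observe that the coaction $A \to A \otimes \Bbbk[G]$ realizes $A$ as a subcomodule of a sum of copies of $\Bbbk[G]$ (after choosing a $\Bbbk$--basis one embeds $A$ into $A \otimes \Bbbk[G]$ which, as a $G$--module, is a direct sum of copies of $\Bbbk[G]$), so $H$--invariance of an element of $A$ is detected inside $\Bbbk[G]$, and one transports the identity ${}^H\Bbbk[G] = {}^{H''}\Bbbk[G]$ — itself just the observability of $H''$ applied on the $\Bbbk[G]$ side — back to $A$. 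Alternatively, if one is willing to cite it, this is exactly \cite[Lemma 11.5.4]{kn:nosotros2} and its surrounding discussion, and the proof may simply point there for the comodule-theoretic step while presenting the Galois-connection arguments for (1) and (2) in full.
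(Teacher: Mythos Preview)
Your arguments for (1) and (2) are correct and match the paper's proof essentially verbatim: both use the Galois-connection identity $H'''=H'$ for (1), and for (2) both argue that $H\subseteq K$ with $K$ observable forces $H''\subseteq K''=K$, while $H''$ itself belongs to the family.

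For (3) your approach is correct but differs from the paper's. The paper argues pointwise via stabilizers: given $a\in {}^H A$, it places $a$ inside a finite-dimensional rational $G$--submodule, sets $K=\{x\in G: x\cdot a=a\}$, invokes Theorem~\ref{theo:follows} to conclude that $K$ is observable, and then uses (2) to get $H''\subseteq K$, whence $a\in {}^{H''}A$. Your route instead exploits the $G$--equivariant embedding $\chi:A\hookrightarrow A_0\otimes\Bbbk[G]$ (trivial action on the first factor), so that $a\in{}^H A$ gives $\chi(a)\in A_0\otimes {}^H\Bbbk[G]=A_0\otimes {}^{H''}\Bbbk[G]$, the last equality being $H'=H'''$; injectivity and equivariance of $\chi$ then yield $a\in{}^{H''}A$. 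Your argument is more self-contained in that it avoids the forward reference to Theorem~\ref{theo:follows}; the paper's argument, on the other hand, makes transparent the underlying geometric fact that isotropy subgroups of points are always observable, which is of independent interest. One small correction: the citation you suggest, \cite[Lemma 11.5.4]{kn:nosotros2}, concerns the identity ${}^H[\Bbbk[G]]=[{}^H\Bbbk[G]]$ rather than the statement you need here, so it would not serve as a direct reference for this step.
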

\begin{proof}
  \begin{enumerate}
  \item  For a Galois connection $H'''=H$ hence (1);
  \item In the above situation $H \subseteq K$ implies that  $H''\subseteq K''=K$ and then $H'' \subseteq \bigcap\{K: H \subseteq K \subseteq G\,,\, K \text{ observable}\}$ and being $H''$ observable, the proof of this part is finished.
    \item It is clear that ${}^{H''}A \subseteq {}^H A$. Take now, $a \in {}^H A$ and let $V$ be a rational finite dimensional $G$--module that contains $a$. If we call $K=\{x \in G: x\cdot a = a\}$, then Theorem \ref{theo:follows} guarantees that $K$ is observable. As $a$ is fixed by $H$ we deduce that $H \subseteq K$ and then $H'' \subseteq K$ and that means that $a$ is fixed by all the elements of $H''$. 
    \end{enumerate}
\end{proof}
\begin{theo}{\cite[Theorem 8]{kn:hmbb},\cite{kn:gross14}}\label{theo:follows} Assume that $H \subseteq $ is a closed inclusion of affine algebraic groups and that there is a finite dimensional rational $M \in \modG$ with the property that there exists $m_0 \in M$ such that $H=G_{m_0}$. Then $H$ is observable in $G$.
  \end{theo}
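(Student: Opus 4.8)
The plan is to exhibit, for any $H$-stable ideal $0\neq I\subseteq\Bbbk[G]$, a nonzero $H$-invariant element of $I$; by Theorem~\ref{theo:charcinv} this is equivalent to observability. So suppose we are given such an $I$ together with the hypothesis that $H=G_{m_0}$ for some $m_0\neq 0$ in a finite dimensional rational $G$-module $M$. By Lemma~\ref{lema:gene} we may already produce a nonzero $f\in I$ and an extendible character $\chi$ of $H$ with $x\cdot f=\chi(x)f$ for all $x\in H$ and $f(1)\neq 0$. In view of the argument used to prove Theorem~\ref{theo:charcinv}, the task reduces to showing that $\chi^{-1}$ is extendible as well; equivalently, to showing that every character of $H=G_{m_0}$ is extendible to $G$.

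The key construction is the following. Since $H$ stabilises the line $\Bbbk m_0$ only up to the trivial character (it fixes $m_0$ on the nose), we instead work with a $\chi$-eigenvector built from $m_0$. Pick $\alpha\in M^{*}$ with $\alpha(m_0)=1$ and form the polynomial $g_0:=\alpha|m_0\in\Bbbk[G]$, so that $g_0(1)=1$ and, because $x\cdot(\alpha|m_0)=\alpha|(x\cdot m_0)=\alpha|m_0$ for $x\in H$, the function $g_0$ is itself $H$-invariant. More generally, for the given character $\chi$, choose by Theorem~\ref{theo:obse1} a finite dimensional rational $G$-module $N$ and an $H$-embedding $\Bbbk_{\chi^{-1}}\hookrightarrow N|_H$; let $n\in N$ be the image of the generator, so $x\cdot n=\chi^{-1}(x)n$ for all $x\in H$, and pick $\beta\in N^{*}$ with $\beta(n)=1$. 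Then $h:=\beta|n\in\Bbbk[G]$ satisfies $h(1)=1$ and, by the same computation as at the end of the proof of Theorem~\ref{theo:obse2}, $x\cdot h=\chi^{-1}(x)h$ for all $x\in H$. This already shows $\chi^{-1}$ is extendible, hence—via Theorem~\ref{theo:obse2}, or directly via $fh\in I$ being a nonzero $H$-invariant—the ideal $I$ contains a nonzero $H$-fixed element.

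Assembling the pieces: given an arbitrary $H$-stable ideal $0\neq I\subseteq\Bbbk[G]$, take $f\in I$ and the extendible character $\chi$ from Lemma~\ref{lema:gene}; produce $h\in\Bbbk[G]$ with $h(1)=1$ and $x\cdot h=\chi^{-1}(x)h$ as above; then $fh\in I$, $fh\neq 0$ since $(fh)(1)=f(1)\neq 0$, and $x\cdot(fh)=(\chi(x)f)(\chi^{-1}(x)h)=fh$, so $fh$ is the desired nonzero $H$-invariant element of $I$. By the ideal-theoretic criterion of Theorem~\ref{theo:charcinv}, $H$ is observable in $G$.

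The main obstacle is not any single computation—each of the equivariance identities above is routine—but rather making sure the reduction is honest: namely that having $H=G_{m_0}$ gives genuine access to \emph{all} characters of $H$, not merely the trivial one. The point is that Theorem~\ref{theo:obse1} supplies, for \emph{any} character $\chi$ of $H$ whatsoever, an extendible companion character; combined with the fact that $H=G_{m_0}$ forces $H$ to be closed and that $\Bbbk[G]$ detects $H$-invariance, one gets that the character group of $H$ is ``extendibility-closed,'' which by Theorem~\ref{theo:obse2} is exactly observability. An alternative, more geometric route would be to quote Theorem~\ref{theo:geocarac} and argue that the orbit $G\cdot m_0\subseteq M$ is a locally closed, hence quasi-affine, subvariety $G$-isomorphic to $G/H$ (the only subtlety being separability of the orbit map in positive characteristic), but the algebraic argument above avoids that separability issue entirely.
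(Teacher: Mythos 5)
There is a genuine gap, and it sits at the heart of your argument. In the middle paragraph you ``choose by Theorem~\ref{theo:obse1} a finite dimensional rational $G$-module $N$ and an $H$-embedding $\Bbbk_{\chi^{-1}}\hookrightarrow N|_H$.'' Theorem~\ref{theo:obse1} does not provide this: it only provides a $G$-module $N$ and some extendible character $\psi$ with $(\Bbbk_{\chi^{-1}})_\psi=\Bbbk_{\chi^{-1}\psi}\subseteq N|_H$. The vector $n$ you obtain therefore satisfies $x\cdot n=\chi^{-1}(x)\psi(x)\,n$, and the matrix coefficient $\beta|n$ shows only that $\chi^{-1}\psi$ is extendible --- to strip off $\psi$ you would need $\psi^{-1}$ to be extendible, which is exactly the kind of statement you are trying to prove. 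An unconditional $H$-embedding of $\Bbbk_{\chi^{-1}}$ into the restriction of a $G$-module is \emph{equivalent} to the extendibility of $\chi^{-1}$ (this is the content of Theorem~\ref{theo:obse2}), so the step is circular. A clean sanity check: your construction of $h$ never uses the hypothesis $H=G_{m_0}$ (the observation that $\alpha|m_0$ is $H$-invariant only re-proves that the trivial character is extendible, which is automatic since $f=1$ works). If the argument were correct it would show that \emph{every} closed subgroup is observable, contradicting, e.g., the non-observability of a Borel subgroup of $\operatorname{SL}_2$.

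The hypothesis $H=G_{m_0}$ has to enter essentially, and the paper's proof shows how: for every $\alpha\in M^*$ the function $\alpha|m_0$ lies in ${}^H\Bbbk[G]$, and if $z\in G$ fixes all of ${}^H\Bbbk[G]$ then $\alpha(z\cdot m_0)=\alpha(m_0)$ for all $\alpha$, forcing $z\cdot m_0=m_0$ and hence $z\in H$. Thus $H$ is exactly the stabilizer of its own invariant algebra, which by a standard fact yields $\bigl[{}^H\Bbbk[G]\bigr]={}^H\bigl[\Bbbk[G]\bigr]$, and the argument in the proof of Theorem~\ref{theo:geocarac} (realizing $G/H$ as the quasi-affine orbit $G\cdot m_0$) then gives observability. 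This is essentially the ``alternative, more geometric route'' you mention and set aside in your last paragraph; it is in fact the route that works, while the purely character-theoretic shortcut does not. If you want an ideal-theoretic finish via Theorem~\ref{theo:charcinv}, you still must first establish that the specific characters $\chi$ produced by Lemma~\ref{lema:gene} have extendible inverses, and for that you need the stabilizer property of $H=G_{m_0}$, not Theorem~\ref{theo:obse1} alone.
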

\proof Take an arbitray $\alpha \in M^*$. The element $\alpha|m_0$
satisfies the following equivariance condition for all $x \in G$ $x
\cdot (\alpha|m_0)= \alpha| (x \cdot m_0)$. Then, $\alpha|m_0 \in {}^H
\Bbbk[G]$. Assume now that $z \in G$ is such that $z \cdot f = f$ for
all $f \in {}^H\Bbbk[G]$. Then for all $\alpha$ we have that $z \cdot
(\alpha|m_0)=\alpha|m_0$ and this implies that $\alpha(z \cdot m_0)=\alpha(m_0)$ for all $\alpha \in M^*$. Then $z \cdot m_0=m_0$ and then $z \in H$. Hence $H=\{z \in G: z \cdot f = f \text{ for all } f \in {}^H\Bbbk[G]\}$.
It is well known from the general theory of affine algebraic groups, that the above characterization of $H$ as the stabilizer of ${}^H\Bbbk[G]$ guarantees that $[{}^H\Bbbk[G]]={}^H[\Bbbk[G]]$. Along the proof of Theorem \ref{theo:geocarac} we proved that the condition $[{}^H\Bbbk[G]]={}^H[\Bbbk[G]]$ guarantees the observability of $H$ on $G$.  
\qed

\smallskip
\noindent {\tt Problem 2. Descent of the finite generation condition.}
\smallskip

Assume that $H \subseteq G$ is a closed inclusion of affine algebraic groups and that $A$ is a rational $G$--module algebra we say that the \emph{the finite generation condition descends from $G$ to $H$} if for all $A$ as above, in the inclusion ${}^G A \subseteq {}^HA$ the finite generation of the smallest $\Bbbk$--algebra implies the finite generation of the larger. 

It is natural to search for conditions for $G$ and $H$ for which the finite generation of invariants descends from $G$ to $H$.

The first thing to notice is that having $H''$ the same invariants
than $H$ we can assume without loss of generality that $H$ is
observable in $G$ as ${}^GA \subseteq {}^{H''}A = {}^HA$. This is a
crucial observation that reduces some problems in invariant theory to
the case of observable subgroups.

\begin{defi}
  Let $H \subseteq G$ be a closed inclusion we say that
  ``the pair $(H,G)$ satisfies the codimension two condition'' if there exists a finite dimensional rational $G$--module $V$ and an element $v \in V$ such that: (i) $H=\{x \in V: x\cdot v =v\}$ and $G/H \cong G\cdot v$; (ii) for each irreducible component $C$ of $\overline {(G \cdot v)} \setminus (G \cdot v)$, we have that $\operatorname{codim}_{\overline {G \cdot v}} C \geq 2$.
\end{defi}

In that context, the following theorem is proved in \cite{kn:gross14}:
\begin{theo}\cite[Theorem 4.3]{kn:gross}\label{theo:descent} For the situation above if $H$ is observable in $G$, the conditions:
  \begin{enumerate}
  \item The $\Bbbk$--algebra $H' \subseteq \Bbbk[G]$ is finitely
    generated;
  \item The pair $(H,G)$ satisfies the codimension two condition;
  \item For any finitely generated rational $G$--module; algebra A, $A^H$ is a finitely generated $\Bbbk$--algebra,
  \end{enumerate}
  are related as follows.
Conditions (1) and (2) are equivalent and condition (3) implies both of them. In the case that the action of $G$ on $V$ is separable and $G$ is reductive, the three conditions are equivalent. 
 \end{theo}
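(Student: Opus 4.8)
The plan is to establish the implications in three blocks. First I would prove the equivalence of (1) and (2). The key dictionary here is the identification of $H'={}^H\Bbbk[G]$ with the ring of regular functions on the quasi-affine homogeneous space $G/H$, which by Theorem \ref{theo:geocarac} and the proof of Theorem \ref{theo:charcinv} we may realize as the orbit $G\cdot v \subseteq V$. Under this identification, $\Bbbk[G\cdot v]$ (the restriction of polynomial functions) sits inside ${}^H\Bbbk[G]=\mathcal O_{G\cdot v}(G\cdot v)$, and the point is that $\mathcal O_{G\cdot v}(G\cdot v)$ agrees with the sections of the structure sheaf of the affine variety $X:=\overline{G\cdot v}$ over the open subset $G\cdot v$. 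Since $X$ is affine and (the normalization issue aside, or after noting $X$ is normal when the action is nice) the complement $X\setminus(G\cdot v)$ has codimension $\geq 2$ exactly when every section of $\mathcal O_X$ over $G\cdot v$ extends to a global section, i.e. $\mathcal O_{G\cdot v}(G\cdot v)=\Bbbk[X]$. The latter is finitely generated because $X$ is affine; conversely, if the complement had a component of codimension one, one produces functions with poles along that divisor that lie in $\mathcal O_{G\cdot v}(G\cdot v)$ but generate a non-finitely-generated algebra (this is the classical argument that the ring of functions on the complement of a codimension-one subset of an affine variety is finitely generated iff that subset is cut out set-theoretically by codimension $\geq 2$ data, via Rees-type / symbolic power considerations). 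This gives $(2)\Rightarrow(1)$ and $(1)\Rightarrow(2)$.

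Next I would prove $(3)\Rightarrow(1)$, which is immediate: $\Bbbk[G]$ is itself a finitely generated rational $G$-module algebra, so taking $A=\Bbbk[G]$ in (3) yields that $A^H={}^H\Bbbk[G]=H'$ is finitely generated.

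Finally, for the reductive, separable case I must prove $(1)\Rightarrow(3)$. Here the standard transfer principle enters: given a finitely generated rational $G$-module algebra $A$, form $(A\otimes\Bbbk[G])^H$, where $H$ acts diagonally. Using that $G/H$ is quasi-affine realized as the orbit $G\cdot v$ with affine closure $X$, one identifies $(A\otimes\Bbbk[G])^H$ with $A\otimes_{\Bbbk}\mathcal O_{G\cdot v}(G\cdot v)$ in a $G$-equivariant way, and the codimension-two condition (equivalent to (1)) lets one replace the sections over $G\cdot v$ by the global functions on the affine variety $X$, so that $(A\otimes\Bbbk[G])^H\cong (A\otimes\Bbbk[X])^{\text{(twisted }G\text{)}}$ is a finitely generated $\Bbbk$-algebra since $A\otimes\Bbbk[X]$ is finitely generated and $G$ is reductive (invariants of a reductive group acting rationally on a finitely generated algebra are finitely generated — Nagata). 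Then one recovers $A^H$ as the image of $(A\otimes\Bbbk[G])^H$ under the $G$-equivariant evaluation/multiplication map $A\otimes\Bbbk[G]\to A$ composed with the co-action, i.e. $A^H$ is a quotient (or a retract) of the finitely generated algebra $(A\otimes\Bbbk[G])^H$, hence finitely generated. The separability of the action on $V$ is what guarantees $G/H\cong G\cdot v$ as varieties (not merely bijectively), so that all the identifications of rings of functions above are genuine isomorphisms.

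The main obstacle I anticipate is the scheme-theoretic bookkeeping around the codimension-two condition: precisely relating "$\mathcal O_{G\cdot v}(G\cdot v)=\Bbbk[\overline{G\cdot v}]$" to "every boundary component has codimension $\geq 2$" requires some care about normality of $\overline{G\cdot v}$ (Serre's condition $S_2$ is really what is used for extending sections across a codimension-two closed subset), and about whether one should pass to the normalization; in positive characteristic this is where the separability hypothesis does genuine work. Everything else — the Galois-connection reduction to observable $H$, the transfer to $(A\otimes\Bbbk[G])^H$, and the appeal to Nagata's theorem for reductive $G$ — is routine once that geometric lemma is in place.
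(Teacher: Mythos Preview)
The paper does not actually prove this theorem: immediately after the statement it refers the reader to \cite{kn:gross} and \cite[Sections 13.5, 13.6]{kn:nosotros2}, and only records the extension-across-codimension-two lemma as the geometric ingredient. So there is no ``paper's own proof'' to compare against; your plan has to be judged on its merits.

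Your outline for $(1)\Leftrightarrow(2)$ and $(3)\Rightarrow(1)$ is correct and standard, and you have correctly located the one genuinely delicate point (normality of $\overline{G\cdot v}$, or passage to the normalization, to justify the extension of sections; this is indeed where separability does work in positive characteristic).

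Your argument for $(1)\Rightarrow(3)$, however, is garbled. You form $(A\otimes\Bbbk[G])^H$ with the diagonal $H$-action, claim it is $A\otimes{}^H\Bbbk[G]$, apply Nagata to a ``twisted $G$'' action on this, and then try to recover $A^H$ as a quotient or retract of it via an ``evaluation/multiplication map''. None of these steps is set up correctly: the evaluation map $A\otimes\Bbbk[G]\to A$ is not $G$-equivariant for the diagonal action, and $A^H$ is not naturally a quotient of $(A\otimes\Bbbk[G])^H$. The correct transfer principle is the \emph{isomorphism}
\[
A^H \;\cong\; \bigl(A\otimes {}^H\Bbbk[G]\bigr)^{G},
\]
where $G$ acts on $A$ via the given action and on ${}^H\Bbbk[G]=\Bbbk[G/H]$ by translation. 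Granting (1) (equivalently (2)), ${}^H\Bbbk[G]\cong\Bbbk[X]$ with $X=\overline{G\cdot v}$ affine, so $A\otimes{}^H\Bbbk[G]$ is a finitely generated rational $G$-module algebra; since $G$ is reductive, Nagata gives that its $G$-invariants are finitely generated, and the displayed isomorphism finishes the proof. This is the argument in the cited references; once you replace your ``quotient/retract'' step by the transfer isomorphism above, your plan is complete.
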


For a proof of this theorem we refer the reader to \cite{kn:gross} or
to a more recent exposition appearing in \cite[Section
  13.5,13.6]{kn:nosotros2}.

The so called ``codimension two condition'' is used in order to apply the following theorem on  the extension of regular functions.
``Let $X$ be an irreducible normal variety
and  $f \in \mathcal O_X(U)$  be a function defined in an open subset U such that $\operatorname{codim}_X(X \setminus U) \geq 2$, then $f$ can be extended to a function defined in X''. See \cite[Lemma 1]{kn:gross14} or \cite[Theorem 2.6.14]{kn:nosotros2} for (similar) proofs of this general result.

It is worth noticing that in case of the special hypothesis on the separability of the action, the ring $H'$ ``behaves like a universal object as far as finite generation is concerned'' (see \cite[page 231]{kn:gross14}). 

\medskip
\noindent {\tt Problem 3. Hilbert's $14^{\text{th}}$ problem.}
\smallskip

The original Hilbert's $14^{\text{th}}$ problem examines the answers to the following question
(see \cite{kn:hilbertpr}).
\medskip

{\tt Hilbert's problem.} {\em Let $A=\Bbbk[X_1,\cdots,X_n]$ be the
  polynomial algebra in $n$ variables, let $H$ be a subgroup $H
  \subseteq \operatorname{GL}_n(\Bbbk)$ and consider the action of $H$
  on $A$ given by the restriction of the natural action of
  $\operatorname{GL}_n(\Bbbk)$. Is the subalgebra of $H$--invariants
  of $A$ finitely generated?}

\medskip

This problem can be generalized to the following context.
\medskip

{\tt Generalized Hilbert's problem.} {\em Assume that
$H \subseteq G$ is a closed inclusion of affine algebraic groups, and that $A$ is a finitely generated commutative $\Bbbk$--algebra.
Assume that $G$ acts rationally in the affine algebra $A$. Find
conditions for the pair $(H,G)$ that guarantee that if $A^G$ is finitely generated so is $A^H$.}

\smallskip
It is clear that Theorem \ref{theo:descent} guarantees that if $G$ is
reductive then, the generalized Hilbert's $14^{\text{th}}$ problem has a positive
answer if $H$ is observable in $G$. 

\subsection{The perspective of observability in Hilbert's
  $\boldsymbol{14}^{\text{th}}$ problem}
The original formulation by D. Hilbert of his famous
$14^{\text{th}}$ problem
reads as follows (as it appeared translated into English in
\cite{kn:hilbertpr}): 

\begin{quote}
``By a finite field of integrality I mean a system
of functions from which a finite number of functions can be chosen, in
which all other functions of the system are rationally and integrally
expressible. Our problem amounts to this: to show that all relatively
integral functions of any given domain of rationality always
constitute a finite field of integrality''.
\end{quote}

In modern language this problem can be formulated as follows --see
\cite{kn:Mumsur}--: ``Let $\Bbbk $ be a field [$ \{x_{1},\dots,x_{n}\}$
a family of indeterminates] and let $K$ be a subfield of 
$\Bbbk (x_{1},\dots,x_{n})$: $\Bbbk  \subset K \subset
\Bbbk (x_{1},\dots,x_{n})$. Is the ring $K \cap \Bbbk [x_{1},\dots,x_{n}]$
finitely generated over $\Bbbk $?''.

This problem of the {\em finite generation of special subalgebras of
the polynomial algebra 
$\Bbbk [x_{1},\dots,x_{n}]$} is known as {\em
Hilbert's 14$^{\text{th}}$ problem}\/  because  it appeared with that
number in the  
list of 23 problems presented by Hilbert in the 
International Congress of Mathematicians celebrated in Paris in 1900
(\cite{kn:hilbertpr}).

A particularly important case  is the following:  

Lt $G\subset  \operatorname{GL}_{n}$ be a subgroup, consider the induced
action of $G$ on $\Bbbk [x_{1},\dots,x_{n}]$ and call
$K={}{^G}{\Bbbk (x_{1},\dots,x_{n})}{}$. As
${}{^G}{\Bbbk [x_{1},\dots,x_{n}]}{} = K 
\cap \Bbbk [x_{1},\dots,x_{n}]$, the finite generation of rings of
invariants could --in principle-- be deduced from an affirmative
answer to Hilbert's problem. 

In 1900, when Hilbert formulated his 14$^{\text{th}}$ problem, a few
particular cases were already solved. Classical invariant theorists
were concerned with the invariants of ``quantics'' (invariants for
certain actions of ${\operatorname{SL}}_{m}(\mathbb C)$). In this
situation the finite generation was proved by Gordan in 1868 for $m=2$
and by Hilbert in 1890 for arbitrary $m$. Hilbert mentioned as
motivation for his 14$^{\text{th}}$ problem work by Hurwitz and also
by Maurer--that turned out to be partially incorrect--.

Maurer's work contains some partial relevant results that were later
rediscovered by Weitzenb\"ock and guaranteed a positive answer for the
case of the invariants of $(\mathbb C, +)$ and $(\mathbb C^*,
\times)$. Later Weyl and Schiffer gave a complete positive answer for
semisimple groups over $\mathbb C$. More recently --based on the
platform established by Mumford in \cite{kn:Mumford}--, Nagata's
school contributions (see \cite{kn:nagataicm} and \cite{kn:nagatata}
together with Haboush's results (\cite{kn:haboushred}) settled the
question affirmatively for reductive groups over fields of arbitrary
characteristic.

In the case of non reductive groups, positive answers are more
scarce. It is worth mentioning --besides the contributions by Maurer and
Weitzenb\"ock for the case of the additive group of the field of
complex numbers-- a result by Hochschild and Mostow (valid in
characteristic zero):  if $U$ is the unipotent
radical of a subgroup $H$ of $G$ that contains a maximal unipotent
subgroup of $G$ then  the $U$--invariants of a finitely
generated commutative $G$--module algebra are finitely generated
(\cite{kn:homounip}). 

Around the same time of the publication of the paper just mentioned,
Grosshans' published the above mentioned papers that provide more
general insights into the problem of the finite generation of
invariants for a non reductive group in arbitrary characteristic. For
example the results of \cite{kn:homounip} can be understood as of
Grosshans' pairs and the same with the classical result of Maurer's
results on the invariants of the additive group. The so called
Popov--Pommerening conjecture concerning the finite generation of the
$U$--invariants of a finitely generated $G$--module algebra when $G$
is a reductive group and $U$ is a unipotent subgroup normalized by a
maximal torus of $G$ can also be formulated within that framework.  The
reader interested in these and many other topics in invariant theory
should read the survey \cite{kn:vinpop}.

It took almost 60 years  to discover
that, in general, the answer to Hilbert's 14$^{\text{th}}$
question is negative. The first counterexample was devised  by  
M.~Nagata and presented at the  International Congress of
Mathematicians in 1958 
(\cite{kn:nagataicm}). 
Nagata's counterexample consisted of a commutative unipotent 
algebraic group $U$ acting 
linearly and by automorphisms on a  polynomial
algebra, with a non finitely generated algebra of invariants.

\section{Observability, Integrals  and reductivity}

In 1977 in the article {\em Induced modules and affine quotients} (referred as IMAQ),
Cline, Parshall and Scott introduced a new viewpoint in the subject of
observability (see \cite{kn:CPS}) by relating it with homological concepts, such as the exactness of the induction functor and injectivity conditions. With hindsight we could say that in a non--explicit way, the idea of observability was related to a generalization of the concept of reductivity (see \cite{kn:firstwork,kn:secondwork}).

The authors summarize --rather succinctly-- the results of their paper as follows:
\smallskip
\begin{center}
\begin{minipage}{.8\textwidth}
{\small {\em Let $G$ be
  an affine algebraic group over an algebraically closed field
  $\Bbbk$. A closed subgroup $H$ of $G$ is exact if induction of
  rational $H$--modules to rational $G$-modules preserves short exact
  sequences.  The main result of this paper is that $H$ is exact iff
  the quotient variety $G/H$ is affine. (In case $G$ is reductive this
  means that $H$ is reductive.) Also, we obtain a characterization of
  exactness in terms of a strong observability criterion, in this
  respect our theorem generalizes a result of Bialynicki-Birula [2] on
  reductive groups in characteristic zero.}}
\end{minipage}
\end{center}

In the definition of strong observability, besides the existence of an
extension of an $H$--module $M$ by a $G$--module $N$, the authors
demand a condition that controls the relation between the
$H$--invariants of the submodule and the $G$--invariants of the
module.

The concept of exactness will be treated in detail in Section
\ref{section:obsadj}. Below we give the basic operative definition in
order to proceed as fast fast as possible to the main results.

\begin{defi} \label{defi:strongobs} Suppose that $H \subseteq G$ is a
  closed inclusion of affine algebraic groups.  We say that a rational
  $H$--module $M$ is strongly extendible, if there is a rational
  $G$--module $N$ such that $M \subseteq N|_H$ and ${}^HM \subseteq
  {}^GN$. If the pair $H \subseteq G$ is such that all rational
  $H$--modules are strongly extendible to $G$ we say that $H$ is
  strongly observable in $G$.
  \end{defi}

\begin{obse}  In the paper we are currently considering the authors
  write down a stronger condition for the fixed parts of the modules
  $N$ and $M$ in the above definition, they ask that ${}^HM = {}^GN$,
  but later in \cite[Remark 4.4,(c)]{kn:CPS} they comment that it can
  be weakened as above.
\end{obse}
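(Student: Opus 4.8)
What has to be shown is that the class of \emph{strongly observable} subgroups does not depend on whether, in Definition \ref{defi:strongobs}, one imposes ${}^HM\subseteq{}^GN$ or the stronger ${}^HM={}^GN$. One implication is trivial, since an extension realizing ${}^HM={}^GN$ in particular realizes ${}^HM\subseteq{}^GN$. So the task is: assuming every rational $H$--module admits an extension $N$ with $M\subseteq N|_H$ and ${}^HM\subseteq{}^GN$, produce for each rational $H$--module $M$ an extension with ${}^HM={}^GN$. The one elementary ingredient I would record at the outset is the identity ${}^GN\cap M={}^HM$, valid for any $H$--submodule $M\subseteq N|_H$ with ${}^HM\subseteq{}^GN$: the inclusion $\supseteq$ is the hypothesis, and $\subseteq$ holds because every $G$--fixed vector is $H$--fixed. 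Hence a weak strong extension $N_0$ of $M$ fails the strong condition only through the presence of $G$--invariants lying outside $M$; writing ${}^GN_0={}^HM\oplus U$ as (necessarily trivial) $G$--modules, one wants to dispose of $U$.

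The natural attempt is to replace $N_0$ by $N_0/U$. Since $M\cap U\subseteq M\cap{}^GN_0\cap U={}^HM\cap U=0$, the embedding $M\subseteq N_0|_H$ survives the quotient. Whether ${}^G(N_0/U)$ has become ${}^HM$ depends on a cohomological point: a vector $n$ whose class is $G$--fixed is exactly one with $x\cdot n-n\in U$ for all $x\in G$, and $x\mapsto x\cdot n-n$ is then a morphism of algebraic groups $G\to(U,+)$, i.e.\ an element of $\operatorname{Hom}(G,\mathbb{G}_a)\otimes U$. When $G$ has no nontrivial additive quotient --- for instance when $G$ is connected reductive --- this group is zero, every such $n$ already lies in ${}^GN_0$, and $N_0/U$ is the required strong extension, so the equivalence is settled module by module. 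In general, however, $\operatorname{Hom}(G,\mathbb{G}_a)\neq 0$, the quotient can acquire new $G$--invariants, and no such elementary surgery works.

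For the general case I would pass to geometry. The weak strong observability of all $H$--modules implies in particular that $H$ is observable, hence by Theorem \ref{theo:geocarac} that $G/H$ is quasi--affine; the crux is that it forces $G/H$ to be \emph{affine} --- equivalently, by the main theorem of IMAQ (to be discussed in Section \ref{section:obsadj}), that induction of rational $H$--modules to $G$ is exact. Granting this, one also obtains the strong extension concretely: for a rational $H$--module $M$ take $N:=\operatorname{Ind}_H^G M$, the right adjoint of restriction; the adjunction gives ${}^GN=\operatorname{Hom}_{G}(\Bbbk,\operatorname{Ind}_H^G M)=\operatorname{Hom}_{H}(\Bbbk,M)={}^HM$ on the nose, while the observability of $H$ provides the $H$--equivariant embedding $M\hookrightarrow N|_H$. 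Thus the three conditions --- strong observability with ``$=$'', strong observability with ``$\subseteq$'', and affineness of $G/H$ --- coincide.

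The step I expect to be the real obstacle is precisely this passage from ``$\subseteq$'' to ``$=$''. It is harmless for a single module only over groups without additive quotients; in general it is a global statement about the pair $(H,G)$ whose content is exactly the affineness --- not merely the quasi--affineness --- of $G/H$, and this is the substance supplied by IMAQ. Everything else (the trivial inclusion, the identity ${}^GN\cap M={}^HM$, the quotient by $U$, and the invariant computation for the induced module) is routine.
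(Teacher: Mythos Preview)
The paper records this as a bibliographical remark and offers no separate proof; the equivalence of the two formulations is left implicit in the full chain of equivalences of Theorem \ref{theo:mainCPS}, which the survey establishes using the ``$\subseteq$'' version while CPS themselves establish it using the ``$=$'' version. Your overall plan --- pass through the affineness of $G/H$ --- is therefore exactly the right route, and your cohomological analysis of the quotient $N_0/U$ correctly identifies why no purely module--by--module surgery can work over a general $G$.

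There is one slip in the final construction. You write that ``the observability of $H$ provides the $H$--equivariant embedding $M\hookrightarrow N|_H$'' for $N=\operatorname{Ind}_H^G M$. Observability gives only the \emph{surjectivity} of the counit $\varepsilon_M:(\operatorname{Ind}_H^G M)|_H\twoheadrightarrow M$ (Lemma \ref{lema:obsvarepsilon}); it does not by itself produce an $H$--section, and for a merely observable $H$ none need exist. The section comes from the stronger hypothesis you have already secured at that point: once $G/H$ is affine, $\Bbbk[G]$ is $H$--injective (Theorem \ref{theo:intobsinj}), so the $H$--surjection $\pi:\Bbbk[G]\to\Bbbk[H]$ splits, and any $H$--splitting $\sigma$ yields the $H$--section $m\mapsto\sum\sigma(Sm_1)\otimes m_0$ of $\varepsilon_M$. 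Under this section an $H$--fixed $m$ goes to $\sigma(1)\otimes m$, so one should also arrange $\sigma(1)=1$ (always possible, again by injectivity) to land in ${}^GN=\{1\otimes m:m\in{}^HM\}$. With these adjustments your argument is complete. The quicker finish is simply to note that CPS prove ``$G/H$ affine $\Rightarrow$ strong observability with $=$'' directly, which closes the loop without any explicit construction.
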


\begin{defi}\label{defi:exactfirst} Assume that $H \subseteq G$ is a closed inclusion of
  affine algebraic groups.  We say that $H$ is exact in $G$ if for an
  arbitrary short exact sequence $0 \to P \to Q \to R \to 0$ of
  $(\Bbbk[G],H)$-- modules, the sequence $0 \to {}^HP \to {}^HQ \to
  {}^HR \to 0$ is exact.
  \end{defi}
Generalizing the relationship discovered in \cite{kn:hmbb}, between
the geometry of $G/H$ and the observability of $H$ in $G$, the authors
of \cite{kn:CPS} show that this more precise concept of ``strong
observability'', has relevant connections with: a. the geometric
structure of the homogeneous space $G/H$ (strengthenig the results
known for the observability situation); b. the exactness properties of
the induction functor from $H$--modules to $G$--modules; c. the
descent of the injectivity condition by restriction of the action.

Indeed, in \cite[Theorem 4.3, Proposition 2.1]{kn:CPS} the following
neat and comprehensive result is proved.

\begin{theo}\label{theo:mainCPS} For a closed inclusion of affine algebraic
  groups $H \subseteq G$, the following conditions are equivalent:
  \begin{enumerate}
  \item The subgroup $H$ is strongly observable in $G$.
    \item The rational $G$--module $\Bbbk[G]$ is injective when
      considered as an $H$--module. More generally for every injective
      rational $G$--module $I$, then $I|_H$ is also injective.
    \item The subgroup $H$ is exact in $G$\footnote{It is also usual
      to define this exactness in terms of the induction functor:
      $H$ is exact in $G$ if the induction functor $\operatorname{Ind}_H^G:\modH \to \modG$ is
      exact (see Definition \ref{def:induction})).}.
      \item The homogeneous space $G/H$ is affine.
    \end{enumerate}
  \end{theo}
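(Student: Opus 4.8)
The plan is to prove Theorem~\ref{theo:mainCPS} by establishing a cycle of implications among the four conditions, leveraging the injectivity properties of $\Bbbk[G]$ in $\modG$ already recorded in the introduction and the geometric characterization of observability from Theorem~\ref{theo:geocarac}. I would organize it as $(4)\Rightarrow(2)\Rightarrow(1)\Rightarrow(3)\Rightarrow(4)$, since each of these steps has a reasonably self-contained character.

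\textbf{From (4) to (2).} Assume $G/H$ is affine. The key structural fact is that $\Bbbk[G]$, as a $\Bbbk[H]$-comodule, decomposes via the quotient map $\pi\colon G\to G/H$: one has $\Bbbk[G]\cong \Bbbk[G/H]\otimes \Bbbk[H]$ as $(\Bbbk[G/H],H)$-modules (this is the algebraic incarnation of local triviality, valid because $G\to G/H$ is a principal $H$-bundle in the faithfully flat sense). Since $\Bbbk[H]$ is an injective $\Bbbk[H]$-comodule and tensoring an injective comodule over the (now finitely generated, hence reasonable) algebra $\Bbbk[G/H]$ with a flat module preserves injectivity in $\modH$, one gets that $\Bbbk[G]|_H$ is injective. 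For a general injective rational $G$-module $I$, one uses that $I$ is a direct summand of $V\otimes\Bbbk[G]$ for some $V\in\modG$ (the imbedding-into-injective construction from the introduction), and restriction of such a module to $H$ is again of the form $V|_H\otimes\Bbbk[G]|_H$, hence injective; summands of injectives are injective.

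\textbf{From (2) to (1) and (1) to (3).} Given that $\Bbbk[G]|_H$ is injective, I would show strong observability directly: for $M\in\modH$, the coaction $\chi_M\colon M\to M\otimes\Bbbk[G]$ (composed with $\id\otimes\pi$-type considerations, or rather the cotensor/induction construction) imbeds $M$ into a module induced from $G$, and injectivity of $\Bbbk[G]|_H$ lets one split off enough of the structure to control invariants; the condition ${}^HM\subseteq{}^GN$ is arranged by taking $N=\operatorname{Ind}_H^G M$ and using Frobenius reciprocity together with the fact that ${}^G(-)=\operatorname{Hom}_G(\Bbbk,-)$ applied to an induced module computes ${}^H M$. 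The implication $(1)\Rightarrow(3)$ is essentially formal: exactness of $H$ in $G$ is the statement that ${}^H(-)$ is exact on $(\Bbbk[G],H)$-modules, equivalently that $\Bbbk[G]$ is an injective (or at least ${}^H(-)$-acyclic) object there; strong observability gives, for any short exact sequence, enough extensions to $G$-modules (where ${}^G(-)$ behaves well because $\Bbbk$ is simple and $\modG$ has the Hochschild cohomology machinery) to force surjectivity of ${}^HQ\to{}^HR$.

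\textbf{From (3) to (4).} This is the hard direction and the main obstacle. Assuming $H$ is exact in $G$, one must produce enough global functions on $G/H$ to conclude affineness. The strategy: exactness applied to quotients $\Bbbk[G]\to\Bbbk[G]/I$ by $H$-stable ideals shows that every such quotient has a nonzero $H$-invariant, i.e.\ $I\cap{}^H\Bbbk[G]$ surjects appropriately; by Theorem~\ref{theo:charcinv} this already yields observability, so $G/H$ is at least quasi-affine (Theorem~\ref{theo:geocarac}) and $\Bbbk[G/H]={}^H\Bbbk[G]$ separates points of $G/H$. To upgrade quasi-affine to affine, one invokes Serre's affineness criterion: $G/H$ is affine iff $H^1(G/H,\mathcal F)=0$ for all quasi-coherent $\mathcal F$, or more efficiently, iff the natural map $G/H\to \operatorname{Spec}\Bbbk[G/H]$ is an isomorphism. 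The exactness hypothesis, translated through the equivalence between $(\Bbbk[G],H)$-modules and quasi-coherent sheaves on $G/H$ (descent along the $H$-torsor $G\to G/H$), says precisely that global sections $\Gamma(G/H,-)=\mathcal O(G/H)\otimes_{?}-$ is exact, which is Serre's criterion. I expect the delicate points to be: (i) setting up the sheaf–comodule dictionary carefully enough that ``${}^H(-)$ exact'' literally becomes ``$\Gamma(G/H,-)$ exact'', including the case where $G$ is not connected; and (ii) the faithfully flat descent input needed for the trivialization $\Bbbk[G]\cong\Bbbk[G/H]\otimes\Bbbk[H]$ in the first step. For the detailed arguments I would refer to \cite[Theorem 4.3, Proposition 2.1]{kn:CPS}.
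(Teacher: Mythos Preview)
Your $(4)\Rightarrow(2)$ step rests on an isomorphism $\Bbbk[G]\cong\Bbbk[G/H]\otimes\Bbbk[H]$ of $(\Bbbk[G/H],H)$--modules, and this is where the argument breaks. Affineness of $G/H$ makes $\Bbbk[G/H]\subseteq\Bbbk[G]$ a faithfully flat $\Bbbk[H]$--Hopf--Galois extension, but such extensions admit a \emph{normal basis} decomposition of the kind you invoke only when they are \emph{cleft}, i.e.\ when there exists a convolution--invertible $H$--colinear section $\Bbbk[H]\to\Bbbk[G]$. That is essentially the existence of a total integral, which is equivalent to~(2) itself; so the step is circular. The torsor $G\to G/H$ is only \'etale--locally trivial, and nothing in (4) alone hands you a global trivialization of the $H$--comodule $\Bbbk[G]$. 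Relatedly, your $(1)\Rightarrow(3)$ is not ``essentially formal'': strong observability gives, for each $H$--module, \emph{one} $G$--extension with controlled invariants, but it does not immediately produce a natural Reynolds--type retraction on all $(\Bbbk[G],H)$--modules, which is what exactness demands.

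The paper's organization is quite different and avoids both pitfalls by routing everything through the single notion of a \emph{total integral} $\sigma:\Bbbk[H]\to\Bbbk[G]$. Theorem~\ref{theo:intobsinj} shows $(1)\Leftrightarrow(2)\Leftrightarrow$ existence of a total integral (the injectivity follows from an explicit splitting $\Lambda(r\otimes f)=\sum r_1\sigma(S(\pi(r_2))f)$ rather than from any global product decomposition), and Theorem~\ref{theo:strgeom} shows $(3)\Leftrightarrow(4)\Leftrightarrow$ existence of a total integral. For $(4)\Rightarrow(3)$ the paper uses Haboush's sheaf argument and Serre's criterion (close to what you sketched for the reverse direction), while $(3)\Rightarrow(4)$ is done by an elementary Nullstellensatz--style argument (Theorem~\ref{theo:convnull}) rather than the full sheaf--comodule dictionary. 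The integral $\sigma$ then yields a Reynolds operator $\mathcal R_M(m)=\sum\sigma(S(m_1))m_0$ that makes $(3)$ transparent. If you want to salvage your cycle, the cleanest fix is to replace your $(4)\Rightarrow(2)$ by $(4)\Rightarrow(3)$ via Serre and then use exactness to extend the inclusion $\Bbbk\hookrightarrow\Bbbk[H]$ along $\Bbbk\hookrightarrow\Bbbk[G]$, producing the total integral directly.
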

The fact that (4) implies (3) was proved (as it is mentioned in the paper) --almost at the same time but
using different methods-- in Haboush's paper
\cite{kn:matsuhab}. Also another proof appeared around the same time in \cite{kn:rich}\footnote{See also the discussion later in the paper in Section \ref{subsection:soandred}.}. Moreover, in the introduction of \cite{kn:CPS}, it
is mentioned that the equivalence of (3) and (4) had been conjectured
by J.A. Green before.

\subsection{Strong observability, injetivity and integrals}

We deal next with the first two conditions of Theorem
\ref{theo:mainCPS} leaving the third and fourth for later
consideration. Our proofs will be different from the orginals as we
use ``integral tools''. Given an affine algebraic group $H$ we define
the notion of integral in $H$ (or $\Bbbk[H]$) with values in an
$H$--algebra $R$ and show the relation of integrals with strong
observability. This relation is implict in \cite[Theorem 3.1]{kn:CPS}
where the authors consider the strong observability for the situation
that $H$ unipotent.  Therein the authors mention \cite[Proposition
  2.2]{kn:ho} as an antecedent where the integrals appear as
cross--sections --in the same manner than in IMAQ--.

\begin{defi}\label{defi:integral}
\begin{enumerate}
\item An (scalar) integral for an affine group $H$ is a linear map
  $\sigma:\Bbbk[H] \rightarrow \Bbbk$ that is invariant
  --i.e. $\sigma(x\cdot f)= \sigma(f)$ for $x \in H$ and $f \in
  \Bbbk[H]$--. It is said to be total if $\sigma(1)=1$.
\item An integral with values in a rational $H$--module algebra $R$ is
  a linear map $\sigma:\Bbbk[H] \rightarrow R$ that is
  $H$--equivariant --i.e. $\sigma(x \cdot f)=x \cdot \sigma(f)$)--.
  We say that it is total if $\sigma(1)=1$.
  \end{enumerate}
\end{defi}

The relation of integrals with strong observability is deployed
explicitly in \cite[Theorems 11.4.8, 11.4.10]{kn:nosotros2} that we
write below and that guarantee the equivalence of conditions (1) and
(2) of Theorem \ref{theo:mainCPS}.

\begin{theo} \label{theo:intobsinj} Given the closed inclusion $H \subseteq G$, $H$ is strongly observable in $G$ if 
and only if $H$ admits a total integral with values in $\Bbbk[G]$ and
this happens if and only if $\Bbbk[G]$ is injective as an $H$--module.
\end{theo}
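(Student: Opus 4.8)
The plan is to prove a cycle of implications: strong observability $\Rightarrow$ existence of a total integral with values in $\Bbbk[G]$ $\Rightarrow$ injectivity of $\Bbbk[G]$ as an $H$--module $\Rightarrow$ strong observability. For the first implication, I would apply the definition of strong observability to the particular rational $H$--module $M=\Bbbk[H]$, or rather to its ``regular'' copy, noting that the trivial submodule $\Bbbk 1 \subseteq \Bbbk[H]$ has $\Bbbk 1 \subseteq {}^H\Bbbk[H]$. Strong extendibility produces a rational $G$--module $N$ with $\Bbbk[H]\subseteq N|_H$ and ${}^H\Bbbk[H]\subseteq {}^GN$; but this is not yet an integral. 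The cleaner route is to use that $\Bbbk[H]$ is an injective $H$--comodule (cogenerator), so the $H$--embedding $\Bbbk[H]\hookrightarrow N|_H$ splits, giving an $H$--linear retraction $N|_H \to \Bbbk[H]$; composing with the restriction map $\Bbbk[G]\to\Bbbk[H]$ going the other way needs care. Instead I would argue directly: strong observability applied to $M=\Bbbk[H]$ together with the $H$--projection onto invariants yields an $H$--equivariant map $\Bbbk[H]\to{}^GN\subseteq N$; composing with a $G$--equivariant (hence $H$--equivariant) map $N\to\Bbbk[G]$ — which exists because $N$ embeds in a sum of copies of $\Bbbk[G]$ and $N\mapsto{}^GN$-data can be transported — produces $\sigma:\Bbbk[H]\to\Bbbk[G]$ with $\sigma(1)\in{}^G\Bbbk[G]=\Bbbk$, and after rescaling $\sigma(1)=1$. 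The honest statement of this step is that the existence of a total integral $\Bbbk[H]\to\Bbbk[G]$ is \emph{equivalent} to the splitting, as an $H$--module map, of the algebra inclusion $\Bbbk\hookrightarrow\Bbbk[H]$ after tensoring up to $\Bbbk[G]$; I would make that equivalence the technical heart.

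For the implication ``total integral $\Rightarrow$ $\Bbbk[G]$ injective as $H$--module'', I would use the standard averaging argument. Recall that $\Bbbk[G]|_H$ is always a \emph{generator} of injectives in the following weak sense: every rational $H$--module embeds into a product of copies of $\Bbbk[H]$, and $\Bbbk[G]\twoheadrightarrow\Bbbk[H]$ via restriction. Given a total integral $\sigma:\Bbbk[H]\to\Bbbk[G]$, one builds for any rational $H$--module $V$ an $H$--linear splitting of the canonical coaction-type embedding $V\hookrightarrow V\otimes\Bbbk[G]$ by the formula $v\otimes f\mapsto (\id\otimes\sigma)(\cdots)$ suitably twisted through the $H$--comodule structure, exactly as one proves that a Hopf algebra with a total integral is cosemisimple, except here the target is $\Bbbk[G]$ rather than $\Bbbk$. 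Concretely: the restriction map $\pi:\Bbbk[G]\to\Bbbk[H]$ is an $H$--module algebra map, $\sigma$ is an $H$--equivariant section in the weak sense $\pi\circ\sigma$ need not be identity, but $\sigma(1)=1$ suffices to produce, for each embedding $V\hookrightarrow\Bbbk[G]\otimes W$ with $W$ trivial, a retraction; chasing this shows $\Bbbk[G]|_H$ has the lifting property against all rational $H$--modules, i.e. is injective. I would lean on the earlier-stated facts that $\modH$ has enough injectives and that $\Bbbk[H]$, hence $\Bbbk[G]$, enters every injective resolution, to reduce injectivity of $\Bbbk[G]|_H$ to the single lifting test.

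The remaining implication ``$\Bbbk[G]$ injective as $H$--module $\Rightarrow$ strong observability'' is the easiest: given a rational $H$--module $M$, embed $M\hookrightarrow M\otimes\Bbbk[H]$ via its coaction; since $\Bbbk[H]$ is a quotient (as $H$--modules) of $\Bbbk[G]$ and $\Bbbk[G]|_H$ is injective, one pulls the embedding back to get $M\hookrightarrow M\otimes\Bbbk[G]=:N$, where the right-hand side is a rational $G$--module via the $G$--action on the $\Bbbk[G]$ tensor factor only (so that $M$ sits inside as an $H$--submodule). Then ${}^GN = {}^G(M\otimes\Bbbk[G]) = M\otimes{}^G\Bbbk[G] = M\otimes\Bbbk \cong M$ contains ${}^HM$, giving condition ${}^HM\subseteq{}^GN$ and hence strong extendibility; since $M$ was arbitrary, $H$ is strongly observable in $G$. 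The main obstacle, as signalled above, is the first implication: turning the bare containment ${}^HM\subseteq{}^GN$ from the definition of strong observability into an honest $H$--equivariant map $\Bbbk[H]\to\Bbbk[G]$ with $\sigma(1)=1$, i.e. checking that the splitting one gets is $H$--linear with the right normalization and lands in $\Bbbk[G]$ rather than some auxiliary $N$ — this is where one must use injectivity of $\Bbbk[H]$ as an $H$--comodule and the fact, recorded before Theorem \ref{theo:obse1}, that every rational $G$--module embeds $G$--equivariantly into a sum of copies of $\Bbbk[G]$.
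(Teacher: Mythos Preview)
Your cycle has two genuine gaps.

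\medskip
\textbf{The step $(3)\Rightarrow(1)$ does not work as written.} You embed $M$ into $N:=M_0\otimes\Bbbk[G]$ (trivial $G$--action on the first factor) by composing the $H$--coaction $\chi:M\to M_0\otimes\Bbbk[H]$ with an $H$--section of $\pi:\Bbbk[G]\to\Bbbk[H]$. Call that section $s$. For $m\in{}^HM$ one has $\chi(m)=m\otimes 1$, so its image in $N$ is $m\otimes s(1)$. Now $s(1)\in{}^H\Bbbk[G]$ with $s(1)|_H=1$, but there is no reason for $s(1)$ to be the constant function $1$; hence $m\otimes s(1)$ need not lie in ${}^GN=M\otimes\Bbbk$. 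Your identification ``${}^GN\cong M$ contains ${}^HM$'' conflates the abstract copy of $M$ with the image of $M$ under the embedding. If instead you use a total integral $\sigma$ with $\sigma(1)=1$ to define $\iota(m)=\sum m_0\otimes\sigma(m_1)$, then indeed $\iota({}^HM)\subseteq{}^GN$, but now $\iota$ need not be injective, since a total integral need not be injective. The paper avoids both pitfalls by first proving plain observability from the integral (via extending characters), then working with the \emph{socle} of $M$: each simple summand is embedded into a $G$--module with the invariant condition checked by hand, one then passes to an $H$--injective $G$--module $L\otimes\Bbbk[G]$, and finally the map is extended from $\operatorname{Soc}(M)$ to $M$ by injectivity. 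Injectivity of the extension is automatic precisely because any nonzero $H$--submodule of $M$ meets the socle.

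\medskip
\textbf{The step $(1)\Rightarrow(2)$ is never carried out.} You try several routes --- splitting via injectivity of $\Bbbk[H]$, an ``$H$--projection onto invariants'', composing with a $G$--map $N\to\Bbbk[G]$ --- and then concede this is ``the main obstacle''. The paper's construction is direct and uses none of this machinery. Apply strong observability to $M=\Bbbk[H]$: one gets a $G$--module $N$ with $\Bbbk[H]\subseteq N|_H$ and $1\in{}^H\Bbbk[H]\subseteq{}^GN$. Pick any linear functional $\alpha\in N^*$ with $\alpha(1)=1$ (no equivariance needed) and set, for $f\in\Bbbk[H]$ and $x\in G$,
\[
\sigma(f)(x)=\alpha(x\cdot f),
\]
which makes sense because $f\in N$ and $N$ is a $G$--module. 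Then $\sigma(f)\in\Bbbk[G]$ as a matrix coefficient, $\sigma(1)(x)=\alpha(x\cdot 1)=\alpha(1)=1$ since $1\in{}^GN$, and $H$--equivariance is the one--line computation $\sigma(y\cdot f)(x)=\alpha(xy\cdot f)=(y\cdot\sigma(f))(x)$.

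\medskip
Your sketch of $(2)\Rightarrow(3)$ is on the right track but needs the explicit splitting: the paper defines $\Lambda:\Bbbk[G]_0\otimes\Bbbk[H]\to\Bbbk[G]$ by $\Lambda(r\otimes f)=\sum r_1\,\sigma\bigl(S(\pi(r_2))f\bigr)$ and checks $\Lambda\circ\chi=\id$, exhibiting $\Bbbk[G]$ as an $H$--direct summand of the injective $\Bbbk[G]_0\otimes\Bbbk[H]$.
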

\begin{proof} First we prove the equivalence of the injectivity condition with the existence of a total integral. 
  If $\Bbbk[G]$ is injective in $\modH$, we can complete the diagram
\[
\xymatrix{ \Bbbk \ar[d] \ar[r] & \Bbbk [H] \ar@{.>}[dl]^{\sigma}
  \\ \Bbbk[G] & }
\]
and produce a morphism of $H$--modules $\sigma :\Bbbk [H] \to
\Bbbk[G]$, sending $1$ into $1$.

Conversely, assume that $\sigma: \Bbbk [H] \to \Bbbk[G]$ is a total
integral and define the map $\Lambda:\Bbbk[G] \otimes \Bbbk [H] \to
\Bbbk[G]$ by the formula for $r \in \Bbbk[G] \text{ and } f\in
\Bbbk[H]$, $\Lambda(r \otimes f)=\sum r_{1}\sigma\bigl(S(\pi(r_{2}))
f\bigr)$ where $\Delta(r)= \sum r_1 \otimes r_2 \in \Bbbk[G] \otimes
\Bbbk[G]$. If $\chi(r)=\sum r_1 \otimes \pi(r_2)$ is the $H$--comodule
structure map for $\Bbbk[G]$, then $(\Lambda \chi)(r)=\sum r_{1}
\sigma \bigl(S(\pi(r_{2}))\pi(r_{3})\bigr) = r\sigma(1)=r$. Also, if
$r \in \Bbbk[G]$ and $x \in H$, then $\sum x\cdot r_{1} \otimes
\pi(r_{2})\cdot x^{-1}=\sum r_{1} \otimes \pi(r_{2})$, equality that
can be proved directly by evaluation of both sides at an element
$(y,z) \in G \times H$ (the left and right side yield the value
$r(yz)$ after evaluation).  Then for all $x \in H$,
\begin{align*}
\Lambda(r \otimes x\cdot f) =&\ \sum r_{1}
\sigma\bigl(S(\pi(r_{2}))(x\cdot f)\bigr) = \sum (x\cdot r_{1})
\sigma\bigl(S(\pi(r_{2})\cdot x^{-1})(x\cdot f)\bigr)=\\ & \ \sum
(x\cdot r_{1})\sigma \bigl(x\cdot\bigl(S(\pi(r_{2})) f\bigr)\bigr)=
x\cdot \sum r_{1} \sigma \bigl(S(\pi(r_{2})) f\bigr)=\\ & \ x\cdot
\Lambda(r \otimes f)
\end{align*}

If we write as $\Bbbk[G]_0\otimes \Bbbk[H]$ the rational $H$--module
with trivial $H$--action in the first tensor factor and the regular
action on the second, the above considerations show that $\chi:
\Bbbk[G]\to \Bbbk[G]_0\otimes \Bbbk[H]$ splits the $H$--morphism
$\Lambda : \Bbbk[G]_0\otimes \Bbbk[H]\to \Bbbk[G]$. Hence, $\Bbbk[G]$
is a direct $H$--module summand of $\Bbbk[G]_0 \otimes \Bbbk [H]$ and
hence (as it is well known that $\Bbbk[H]$ is injective as a rational
$H$--module) the polynomial algebra $\Bbbk[G]$ is also injective as a
rational $H$--module.
\medskip

Next we show how to produce a total integral if we know that the
inclusion $H \subseteq G$ is strongly observable.

Assume that $H$ is strongly observable and consider the $H$--module
$\Bbbk[H]$. By the hypothesis of strong observability, one can find an
inclusion $\Bbbk[H] \subset N$ where $N$ is a rational $G$--module and
${}^H\Bbbk[H]=\Bbbk= N^G$.  Take a linear functional $\alpha$ on $N$
such that $\alpha(1)=1$ and define $f \mapsto \sigma(f):
\Bbbk[H]\rightarrow \Bbbk[G]$ as: $\sigma(f)(x)= \alpha(x \cdot f)$
for $x \in G$. The integral is total as $\sigma(1)(x)=\alpha(x \cdot
1)=\alpha(1)=1.$
  
For the proof of the $H$--equivariance of $\sigma$ we compute
$\sigma(y\cdot f)(x)= \alpha(x \cdot y \cdot f)=\alpha(xy \cdot
f)=\sigma(f)(xy)=(y \cdot \sigma(f))(x).$

We finish the proof of the theorem by showing that the existence of a
total integral implies the strong observability of $H$ in $G$.

Assume that $\sigma$ is a total integral. First show that $H$ is
observable in $G$. Assume that $\gamma$ is a rational character of $H$
and fix an $f \in \Bbbk[G]$ with the property that
$\pi(f)=f|_H=\gamma$. Define the following element of $\Bbbk[G]$:
$g=\sum\sigma\left(S(\pi(f_2))\gamma\right)f_1 \in \Bbbk[G]$. A direct
computation shows that for all $x \in H$ we have that $x\cdot
g=\gamma(x)g$. As $g(1)=1$ we conclude that $g$ extends $\gamma$ and
being $\gamma$ an arbitrary character we deduce the observability of
$H$ in $G$.

In order to prove that the observability is strong we proceed as
follows. Given $M \in \modH$ we take $S=\bigoplus S_i $ the socle of
$M$, $S_i$ a simple object in $\modH$. Using the fact that $H$ is
observable, and $S_i$ simple it is easy to show that we can find
$H$--equivariant inclusions $\eta_i:S_i \to T_i$ with $T_i$ a
$G$--module, and $\eta_i({}^HS_i)\subseteq {}^GT_i$. Then we have a
map $\eta:S \to \bigoplus T_i$ with the required property for the
strong observability. In other words, we have proved that if $H$ is
observable in $G$, an arbitrary rational $H$--module has its socle
strongly extendible to a $G$--module.  We go one step further and
prove that in our case, this $G$--module (that we call $L$) can be
taken to be injective. This is done by imbeding the $G$--module thus
obtained, using the structure map $\chi: L \to L \otimes
\Bbbk[G]$. This map is equivariant when $G$ acts trivially in the
first tensor component, and using the fact that we have a total
integral, we see that $L \otimes \Bbbk[G]$ is injective as an
$H$--module.  All in all, we have proved that the original $H$--module
$M$ has its socle $S$ strongly extended to a $G$--module $M$ that is
injective as an $H$--module. The injectivity of $M$ guarantees the
extension of the map from $S$ to $M$ and this extension does the job
without increasing the $H$--invariants as ${}^HS={}^HM$.
\end{proof}
  
  \subsection{Integrals, observability and invariants}

Here we describe briefly some aspects on the development of the ideas
concerning total integrals mainly in the context of algebraic groups.

It was realized around 1961 that the concept of ``integral'' taking
values in an arbitrary $\Bbbk [H]$--comodule algebra (or rational
$H$--module algebras) instead of in the base field $\Bbbk $ could be a
relevant tool to control the representations and the geometry of the
actions of the group $H$. A particularly interesting case is when the
$\Bbbk [H]$--comodule algebra is $\Bbbk [G]$ for $G$ an affine
algebraic group and $H$ a given subgroup.

An important motivation was the following. In \cite{kn:ho} and
\cite{kn:inj}, Hochschild set the basis of the cohomology theory of
affine algebraic groups --rational cohomology. It was soon observed
that if $G$ is an affine algebraic group and $H\subseteq G$ a normal
closed subgroup, then it was necessary to prove that $\Bbbk [G]$ is
injective as an $H$--module in order to guarantee the convergence of
the Lyndon-Hochschild-Serre spectral sequence --that relates the
cohomology of $G$, $H$ and $G/H$--.

The necessary injectivity result is a direct consequence of the
equivalence of (2) and (4) in Theorem \ref{theo:mainCPS} and it was
treated and proved in certain cases in the mentioned papers
\cite{kn:ho} and \cite{kn:inj}. For example, the injectivity of
$\Bbbk[G]$ as a rational $H$--comodule and the cohomological
consequences, were established in \cite[Prop.~2.2]{kn:inj} but only for
the case that the integrals are multiplicative --strong restriction
that rarely occurs except in the case of unipotent subgroups.  As far
as we are aware, the injectivity of $\Bbbk [G]$ as an $H$--module ,
for $H$ normal in $G$ was proved in full generality only much later in
\cite{kn:CPS}, \cite{kn:matsuhab} and \cite{kn:ob} (the three articles
appeared in 1977).  Non multiplicative general integrals appeared
around 1977, even though at first they were used in a subordinate way
to produce multiplicative ones.

Concerning this fact, we mention the following two results from
\cite{kn:CPS}.  In Proposition 1.10 (attributed to Hochschild:
\cite[Prop. 2.2]{kn:ho}) the author proves that if the closed
inclusion $H \subseteq G$ of affine algebraic groups admits an
\emph{equivariant cross--section}, then $\Bbbk[G]$ is injective as an
$H$--module. Such a cross section is a closed subvariety $S \subseteq
G$ such that the map given by multiplication $(s,x) \mapsto sx: S
\times H \to G$ is an isomorphism of varieties. The proof of the
injectivity result follows from the fact that the $H$--module algebras
$\Bbbk[S] \otimes \Bbbk[H]$ and $\Bbbk[G]$ are equivariantly
isomorphic with respect to the natural actions on each tensor factor
and endowing $\Bbbk[S]$ with the trivial $H$--action.

The use of integrals (without mentioning the name) appears in the
following theorem where the authors deal with the relationship between
the existence of a total integral with values in $\Bbbk [X]$ and the
existence of affine quotients of $X$ --at least for the case of a
unipotent group--. This situation can be generalized for non unipotent
groups, but one needs to restrict the variety $X$ to be an affine
algebraic group as in Theorem \ref{theo:mainCPS}.
 
\begin{theo}\cite[Thm.~3.1]{kn:CPS}
  Let $U$ be a connected unipotent group acting on an affine variety
  $X$. The following are equivalent.
  \begin{enumerate}
  \item $\Bbbk[X]$ is a rationally injective $U$--module.
  \item There is a $U$--equivariant morphism of varieties $\rho: X \to
    U$, (i.e., there is a $U$--equivariant algebra homorphism
    $\Bbbk[U] \to \Bbbk[X]$).
    \item There is a $U$--module homomorphism $\alpha:\Bbbk[U] \to
      \Bbbk[X]$ with $\alpha(1)=1$.
  \end{enumerate}
  When these conditions are satisfied, the quotient $X/U$ exists and
  is affine.
\end{theo}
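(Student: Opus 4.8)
The plan is to prove the cycle of implications $(1)\Rightarrow(3)\Rightarrow(2)\Rightarrow(1)$ and then, under these equivalent conditions, to construct the affine quotient $X/U$ explicitly. The implication $(1)\Rightarrow(3)$ is the easy one: if $\Bbbk[X]$ is rationally injective as a $U$--module, then using that $\Bbbk$ includes into $\Bbbk[U]$ as the $U$--fixed constants (since $U$ is connected), we complete the diagram with vertical arrow the unit $\Bbbk\to\Bbbk[X]$ to obtain a $U$--module map $\alpha\colon\Bbbk[U]\to\Bbbk[X]$ with $\alpha(1)=1$; this is exactly condition $(3)$ (a total integral with values in $\Bbbk[X]$, in the language of Definition~\ref{defi:integral}).

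The implication $(3)\Rightarrow(2)$ is where the unipotence of $U$ is essential, and I expect it to be the main obstacle. Starting from a total integral $\alpha\colon\Bbbk[U]\to\Bbbk[X]$, one must manufacture a genuine algebra homomorphism $\Bbbk[U]\to\Bbbk[X]$, equivalently a $U$--equivariant morphism of varieties $\rho\colon X\to U$. The classical device here is that for a connected unipotent group the linear integral can be ``corrected'' to a multiplicative one: one uses the filtration of $U$ by normal subgroups with successive quotients isomorphic to $\mathbb{G}_a$, and induces up the section one factor at a time, at each stage replacing the merely linear map by an algebra map using that $H^1$ and higher cohomology controlling the obstruction vanish for $\mathbb{G}_a$--extensions in this setting. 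Concretely, one can also argue via the $\Lambda$--construction from the proof of Theorem~\ref{theo:intobsinj}: the integral splits $\Bbbk[X]$ off inside $\Bbbk[X]_0\otimes\Bbbk[U]$ as a $U$--module, and then one appeals to the fact that for $U$ unipotent every such splitting can be upgraded, via the triangularizability of the $U$--action and an inductive averaging, to one respecting the algebra structure, yielding the coalgebra map $\Bbbk[U]\to\Bbbk[X]$ dual to $\rho$. The hard part is precisely this passage from ``linear section'' to ``algebra section''; it is false for general (non-unipotent) groups, which is why the theorem is stated only for unipotent $U$ and why the more general statement (Theorem~\ref{theo:mainCPS}) needs $X$ to be a group.

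The implication $(2)\Rightarrow(1)$ runs as in the proof of Theorem~\ref{theo:intobsinj}: given the equivariant morphism $\rho\colon X\to U$, its graph realizes $X$ as a closed $U$--stable subvariety of $X\times U$ on which $U$ acts, via $\rho$, with the translation action on the second factor; using $\rho$ one shows that the $U$--module algebras $\Bbbk[X]$ and $\Bbbk[\rho^{-1}(e)]\otimes\Bbbk[U]$ are $U$--equivariantly isomorphic (with trivial action on the first tensor factor), and since $\Bbbk[U]$ is injective as a rational $U$--module, so is any such tensor product, hence so is $\Bbbk[X]$. Finally, for the concluding assertion: once $\rho\colon X\to U$ is available, set $\Bbbk[X/U]:={}^U\Bbbk[X]$; one checks that the fibre $Y:=\rho^{-1}(e)$ is a closed subvariety, that multiplication $U\times Y\to X$ is an isomorphism (using $\rho$ to produce the inverse $x\mapsto(\rho(x),\rho(x)^{-1}\cdot x)$), so $X\cong U\times Y$ $U$--equivariantly, whence $X/U\cong Y$ is affine and its coordinate ring is $\Bbbk[Y]={}^U\Bbbk[X]$; in particular ${}^U\Bbbk[X]$ is finitely generated, recovering the affineness of the quotient. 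This last step is essentially the standard "a free action with a section has affine quotient" argument and requires no new input beyond what $\rho$ already gives.
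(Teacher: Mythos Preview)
Your cycle $(1)\Rightarrow(3)\Rightarrow(2)\Rightarrow(1)$ runs opposite to the paper's $(1)\Rightarrow(2)\Rightarrow(3)\Rightarrow(1)$, and this creates a genuine gap at your step $(3)\Rightarrow(2)$. The sketch you give there --- filtration by $\mathbb G_a$'s, unspecified cohomological vanishing, the $\Lambda$--construction, ``inductive averaging'' --- is not an argument: the $\Lambda$--construction produces a $U$--module retraction of $\chi\colon\Bbbk[X]\to\Bbbk[X]_0\otimes\Bbbk[U]$, which is the content of $(3)\Rightarrow(1)$, not a $U$--equivariant \emph{algebra} map $\Bbbk[U]\to\Bbbk[X]$; and you do not say what the obstruction class is or why it vanishes. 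In effect your $(3)\Rightarrow(2)$ collapses to $(3)\Rightarrow(1)\Rightarrow(2)$, i.e.\ to the paper's route.

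The paper's $(1)\Rightarrow(2)$ is the concrete mechanism behind your ``triangularizability'' remark, and it \emph{needs} injectivity, not just a total integral, at every step. One writes $\Bbbk[U]=\Bbbk[X_1,\dots,X_n]$ with $u\cdot X_i\equiv X_i\pmod{P_{i-1}}$ where $P_i=\Bbbk[X_1,\dots,X_i]$ (this triangular form is exactly what unipotence buys). Starting from $P_0=\Bbbk\hookrightarrow\Bbbk[X]$, suppose one has a $U$--equivariant algebra map $\alpha_{i-1}\colon P_{i-1}\to\Bbbk[X]$; by injectivity of $\Bbbk[X]$ extend it to a $U$--equivariant \emph{module} map $\beta_i\colon P_i\to\Bbbk[X]$, then \emph{redefine} $\alpha_i$ as the algebra map sending each $X_j$ to $\beta_i(X_j)$. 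The triangularity ensures $\alpha_i$ is again $U$--equivariant. This induction is the missing idea in your proposal. Your $(2)\Rightarrow(1)$ via the cross--section isomorphism $X\cong U\times\rho^{-1}(e)$ and your identification of the affine quotient with $\rho^{-1}(e)$ are correct and agree with the paper.
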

\begin{proof}
  
  \noindent (1) $\Rightarrow$ (2) As $U$ is unipotent one can write
  $\Bbbk[U]$ as $\Bbbk[U]=\Bbbk[X_1,\cdots,X_n]$ with the property
  that if $P_i=\Bbbk[X_1,\cdots,X_i]$ then, for all $u \in U$, $u\cdot
  X_i \cong X_i (\!\!\!\mod P_{i-1})$. Then we start with $P_0=\Bbbk$
  for which we take the inclusion $\Bbbk \to \Bbbk[X]$ and construct
  by induction a $U$--equivariant algebra homomorphism $\alpha_i:P_i
  \to \Bbbk[X]$. Given $\alpha_{i-1}: P_{i-1} \to \Bbbk[X]$ we extend
  it as a $U$--equivariant morphism of $U$--modules $\beta_i:P_i \to
  \Bbbk[X]$ using the injectivity of $\Bbbk[X]$. Then, define
  $\alpha_i$ as the morphism of algebras that on the generators take
  values $\alpha_i(X_j)=\beta_i(X_j)$ for $1 \leq j \leq i$. It is
  easy to see that $\alpha_i$ is $U$--equivariant.

  \noindent (3) $\Rightarrow$ (1) This is the content of Theorem
  \ref{theo:intobsinj} item (2). See also the comment that follows
  after the proof.
  
It is clear that the quotient variety $X/U$ will be the cross--section
associated to $\rho$, i.e. $\rho^{-1}(1_U)$.
  \end{proof}


Nowadays, all these considerations have been proved to be valid in a
more general framework. In particular the
theory Hopf--Galois extensions is
well established --see for example \cite{kn:montgomery} for an
exposition of the original results of \cite{kn:sche}--. From today's
perspective one can say that \cite[Thm.~3.1]{kn:CPS} is a predecessor
of the theory that relates the existence of integrals with the Galois
theory of Hopf algebras as in \cite{kn:doita} --see
\cite{kn:montgomery} for a comprehensive exposition and a complete
bibliography--.

In a parallel development, Sweedler collected in his classical book
\cite{kn:Sbook} (1969) the basic properties of the (scalar) integrals
in the set up of general Hopf algebras. Therein he also proved, a
generalization for arbitrary Hopf algebras of Hochschild's result
guaranteeing that the existence of an (scalar) total integral for the
Hopf algebra of an affine algebraic group is equivalent to the
complete reducibility of the representations of the group
(\cite{kn:hobook0}). The general situation of the existence of total
$H$--integrals with values in $\Bbbk[G]$ for $H \subseteq G$ and its
relation with semisimplicity, appeared first in \cite{kn:firstwork}.

These developments culminate beautifully in a series of articles by
Y.~Doi and later by Y.~Doi and M.~Takeuchi starting in 1983. The
authors define the general notion of total integral from a Hopf
algebra $H$ in an $H$--comodule algebra $A$ and prove the
corresponding injectivity result as well as many other interesting
properties of the category of the $(A,H)$--comodules.  (see
\cite{kn:doi1}, \cite{kn:Doitot} and \cite{kn:doita}).
\subsection{Observability, exactness and quotients}

In this section we complete the proof of IMAQ's Theorem 4.3 showing the relation
of strong observability with the exactness of the induction functor
and also with the affineness of the associated homogeneous space.

We need first a proof of the fact that the exactness condition implies
the observability.

\begin{prop} Assume that $H \subseteq G$ is a closed inclusion of affine algebraic groups. If $H$ is exact in $G$ then, $H$ is observable in $G$.
\end{prop}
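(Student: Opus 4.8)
The plan is to reduce to a statement about characters and then feed a single, well--chosen short exact sequence into the exactness hypothesis. By Theorem \ref{theo:obse2} it is enough to show that whenever a character $\chi$ of $H$ is extendible, so is $\chi^{-1}$. So fix an extendible character $\chi$ and a function $g\in\Bbbk[G]$ with $g(1)=1$ and $x\cdot g=\chi(x)g$ for all $x\in H$, and consider the $(\Bbbk[G],H)$--module $\Bbbk[G]_\chi$: as a $\Bbbk[G]$--module it is just $\Bbbk[G]$, but $H$ acts through the twisted action $x\cdot_\chi r=\chi(x)(x\cdot r)$ (this is compatible with the multiplication, so $\Bbbk[G]_\chi$ really is a $(\Bbbk[G],H)$--module). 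Unwinding the definitions, ${}^H(\Bbbk[G]_\chi)=\{\,r\in\Bbbk[G]:x\cdot r=\chi^{-1}(x)r\ \text{for all }x\in H\,\}$; a nonzero element of this space becomes, after replacing $r$ by $r\cdot z$ for some $z$ with $r(z)\neq 0$ and rescaling, a function realizing the extendibility of $\chi^{-1}$. Thus everything reduces to proving ${}^H(\Bbbk[G]_\chi)\neq 0$.

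The key geometric fact I would establish is that the set of $\chi$--semi--invariants, $\Sigma:=\{\,r\in\Bbbk[G]:x\cdot r=\chi(x)r\ \text{for all }x\in H\,\}$, has no common zero on $G$. Indeed $\Sigma$ is carried into itself by every operation $r\mapsto r\cdot z$, $z\in G$ (right $H$--semi--invariance survives these translations), so its zero set $Z(\Sigma)$ satisfies $G\cdot Z(\Sigma)\subseteq Z(\Sigma)$ and is therefore either $\varnothing$ or all of $G$; it is not all of $G$, since $g\in\Sigma$ and $g(1)=1$. By Hilbert's Nullstellensatz the ideal of $\Bbbk[G]$ generated by $\Sigma$ is the unit ideal, so there exist finitely many $s_1,\dots,s_k\in\Sigma$ and $a_1,\dots,a_k\in\Bbbk[G]$ with $\sum_{i=1}^{k}a_is_i=1$.

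Now I would assemble the $(\Bbbk[G],H)$--module homomorphism $\varphi\colon(\Bbbk[G]_\chi)^{\oplus k}\longrightarrow\Bbbk[G]$, $(r_i)_i\longmapsto\sum_i s_ir_i$. It is $\Bbbk[G]$--linear, and it is $H$--equivariant because each $s_i$ is $\chi$--semi--invariant, so that $x\cdot(s_ir_i)=\chi(x)\,s_i\,(x\cdot r_i)=s_i\,(x\cdot_\chi r_i)$; and it is surjective, because $1=\varphi\bigl((a_i)_i\bigr)$ lies in its image, which is an ideal of $\Bbbk[G]$. Setting $K=\ker\varphi$, we obtain a short exact sequence of $(\Bbbk[G],H)$--modules
\[
0\longrightarrow K\longrightarrow(\Bbbk[G]_\chi)^{\oplus k}\longrightarrow\Bbbk[G]\longrightarrow 0 .
\]
Applying the functor $N\mapsto{}^HN$, which is exact on $(\Bbbk[G],H)$--modules precisely because $H$ is exact in $G$ (Definition \ref{defi:exactfirst}), yields a surjection $\bigl({}^H(\Bbbk[G]_\chi)\bigr)^{\oplus k}\twoheadrightarrow{}^H\Bbbk[G]$. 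Since $1\in{}^H\Bbbk[G]$, the target is nonzero, hence ${}^H(\Bbbk[G]_\chi)\neq 0$, which is exactly what was needed.

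The only step that is more than bookkeeping is the no--common--zero assertion for $\Sigma$, and I expect that to be the one real obstacle; but it drops out of the translation--invariance of $\Sigma$ as above. Two incidental remarks: no connectedness hypothesis on $G$ is used (nothing is inverted, and the Nullstellensatz step uses only that $\Bbbk[G]$ is a finitely generated algebra over the algebraically closed field $\Bbbk$); and one could instead run the argument through the ideal--theoretic criterion of Theorem \ref{theo:charcinv} by showing that ${}^HI\neq 0$ for every nonzero $H$--stable ideal $I\subseteq\Bbbk[G]$, but that route is noticeably more roundabout.
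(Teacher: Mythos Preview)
Your proof is correct, but it takes a different path from the paper's. The paper argues directly with the counit of the induction/restriction adjunction: for any $M\in\modH$ it applies the exactness hypothesis to the surjection of $(\Bbbk[G],H)$--modules $\pi\otimes\id:\Bbbk[G]\otimes M\to\Bbbk[H]\otimes M$ (the $\Bbbk[G]$--structure on the target coming through $\pi$), and observes that on $H$--invariants this is precisely $\varepsilon_M:{}^H(\Bbbk[G]\otimes M)\to{}^H(\Bbbk[H]\otimes M)\cong M$. Surjectivity of $\varepsilon_M$ for all $M$ is then the criterion of Lemma~\ref{lema:obsvarepsilon}.

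By contrast you go through the character criterion of Theorem~\ref{theo:obse2} and manufacture, for each extendible $\chi$, a tailor--made surjection $(\Bbbk[G]_\chi)^{\oplus k}\twoheadrightarrow\Bbbk[G]$ by a Nullstellensatz argument on the zero set of the $\chi$--semi--invariants. The trade--offs are clear: the paper's route is shorter, works uniformly for all $M$ at once, and hooks straight into the adjunction picture developed later in Section~\ref{section:obsadj}; your route is more explicit and self--contained (it does not invoke the counit characterization), and the ``ideal generated by semi--invariants equals the unit ideal'' step is exactly the device the paper itself uses in the proof of (1)$\Rightarrow$(2) of Theorem~\ref{theo:strgeom}. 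Either way the geometry is the same---right translations move the semi--invariants around, so their common zero locus is $G$--stable---but the paper hides this inside the single universal surjection $\Bbbk[G]\otimes M\to\Bbbk[H]\otimes M$.
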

\begin{proof} Take $M \in \modG$ and consider the
  morphism $\pi \otimes id: \Bbbk[G] \otimes M \to \Bbbk[H] \otimes
  M$, that is clearly a morphism of $(\Bbbk[G],H)$--m\'odules (see
  Definition \ref{defi:exactfirst}) provided that we endow $\Bbbk[H]$
  with the structure of $\Bbbk[G]$ module given by $\pi$. The
  associated morphism obtained by restriction to the $H$--fixed part
  is the map ${}^H(\Bbbk[G] \otimes M) \to {}^H(\Bbbk[H] \otimes
  M)=M$, $\sum f_i \otimes m_i \mapsto \sum f_i(1)m_i$.  Thanks to the
  exactness hypothesis we deduce that this morphism $E_M$ --that is
  the counit of the adjunction between induction and restriction-- is
  surjective.  This is one of the possible characterizations of
  observability and hence the result is proved (see also
  \cite[Lemma 4.2]{kn:CPS} for another line of reasoning).
\end{proof}
The relation of observability and the induction functor is treated below in Section \ref{section:obsadj}: Definition \ref{def:induction} and Lemma \ref{lema:obsvarepsilon}.

We will need for the proof the following easy and handy Lemma that appears for example in \cite[Theorem 1.4.49]{kn:nosotros2}, and that guarantees that within the class of quasi--affine varieties, the validity of the Nullstellensatz characterizes the affine ones.
\begin{theo}\label{theo:convnull} Assume that $X$ is a quasi--affine variety with the property that if $J$ is an arbitrary proper ideal $J \subsetneq \mathcal O_X(X)$, then $\mathcal Z(J) \neq \emptyset$, then $X$ is affine. In particular if $H$ is an observable subgroup of $G$,
  if for all $J \subsetneq {}^H\Bbbk[G]$ we also have that $J\Bbbk[G] \neq \Bbbk[G]$, then $G/H$ is affine.  
  \end{theo}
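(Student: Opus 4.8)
The statement has two parts: (i) a quasi-affine variety $X$ satisfying the Nullstellensatz (every proper ideal of $\mathcal O_X(X)$ has non-empty zero set) is affine; and (ii) the specialization to $X = G/H$ when $H$ is observable. Part (ii) is essentially a translation: by Theorem \ref{theo:geocarac}, observability of $H$ in $G$ means $G/H$ is quasi-affine, and $\mathcal O_{G/H}(G/H) = {}^H\Bbbk[G]$ (regular functions on the quotient are the $H$-invariant functions on $G$), while the zero set of the ideal generated inside $\Bbbk[G]$ by $J \subseteq {}^H\Bbbk[G]$ is non-empty precisely when $J\Bbbk[G] \neq \Bbbk[G]$, using that $G \to G/H$ is surjective. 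So once (i) is in hand, (ii) follows by substituting these identifications into the hypothesis of (i).

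For part (i), the plan is to use the standard criterion that a variety $X$ with finitely generated global ring $A = \mathcal O_X(X)$ is affine if and only if the canonical morphism $j\colon X \to \operatorname{Spec} A$ (more precisely the associated morphism to the affine variety with coordinate ring $A$) is an isomorphism. Since $X$ is quasi-affine, $A$ is a domain and is finitely generated over $\Bbbk$ (this is part of what quasi-affineness buys us, together with noetherianity), and $j$ is an open immersion onto its image — this is the content of the quasi-affine hypothesis. So the whole matter reduces to showing $j$ is surjective, i.e. that $j(X) = \operatorname{Spec} A$ as a set (it is automatically a dense open subset). First I would suppose for contradiction that $j(X) \subsetneq \operatorname{Spec} A$, pick a point $p \in \operatorname{Spec} A \setminus j(X)$, and let $J = \mathfrak m_p \cap A$ be (or rather, take $J$ to be the maximal ideal of $A$ corresponding to a closed point not in the image, using that $j(X)$ is open so its complement is a non-empty closed set containing a closed point since $\operatorname{Spec} A$ is a variety — a Jacobson ring). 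Then $J$ is a proper ideal of $A$ with $\mathcal Z_X(J) = \emptyset$: indeed a point $x \in X$ with $f(x) = 0$ for all $f \in J$ would map under $j$ to a point of $\operatorname{Spec} A$ whose maximal ideal contains $J$, hence equals the closed point we chose, contradicting that this point is not in $j(X)$. This contradicts the Nullstellensatz hypothesis, so $j$ is surjective, hence an isomorphism, hence $X$ is affine.

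The main obstacle — and the step that needs care rather than being purely formal — is making precise the claim that a quasi-affine variety $X$ has $\mathcal O_X(X)$ finitely generated and the canonical map to $\operatorname{Spec}\mathcal O_X(X)$ an open immersion. This is not automatic for arbitrary schemes but is true for quasi-affine varieties in the classical sense used in this paper (an open subset of an affine variety); I would cite the relevant foundational fact, presumably \cite[Theorem 1.4.48]{kn:nosotros2} or its neighbourhood in the same reference, which is exactly where the earlier geometric facts were drawn from. With that input, the argument above is clean: the Nullstellensatz hypothesis is precisely what promotes the open immersion $j$ to a bijection.

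For part (ii), the remaining detail is to verify $\mathcal O_{G/H}(G/H) = {}^H\Bbbk[G]$ and that zero sets correspond correctly; both are standard properties of the quotient $G/H$ (the projection $p\colon G \to G/H$ is faithfully flat, or at least surjective and submersive, so $p^*$ identifies $\mathcal O_{G/H}(G/H)$ with ${}^H\Bbbk[G]$, and $\mathcal Z_{G/H}(J) = \emptyset$ iff $p^{-1}(\mathcal Z_{G/H}(J)) = \mathcal Z_G(J\Bbbk[G]) = \emptyset$ iff $J\Bbbk[G] = \Bbbk[G]$ by the ordinary Nullstellensatz in $\Bbbk[G]$). I would state these identifications in one or two sentences with a reference and conclude by applying part (i) to $X = G/H$.
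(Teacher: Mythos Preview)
The paper does not actually prove this theorem; it only states it and cites \cite[Theorem 1.4.49]{kn:nosotros2}. So there is no proof in the paper to compare against.

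Your strategy for part (i) is sound, but one claim is false: you assert that for a quasi-affine variety $X$ the ring $A = \mathcal O_X(X)$ is finitely generated over $\Bbbk$. It need not be --- indeed the very paper you are working in discusses Nagata's counterexample to Hilbert's $14^{\text{th}}$ problem, which produces observable $H \subset G$ (hence $G/H$ quasi-affine by Theorem \ref{theo:geocarac}) with ${}^H\Bbbk[G] = \mathcal O_{G/H}(G/H)$ not finitely generated. So you cannot assume $\operatorname{Spec} A$ is a variety or that $A$ is Jacobson, and your ``closed point in the complement'' step is unjustified as written. The repair is minor: the open immersion $j\colon X \hookrightarrow \operatorname{Spec} A$ holds for any quasi-affine scheme regardless of finite generation, and if $j$ is not surjective you may take $J$ to be the ideal of the non-empty closed complement (or any prime it contains) rather than a maximal ideal; then $\mathcal Z_X(J) = \emptyset$ follows exactly as you argue, and once $j$ is an isomorphism $X$ is an affine scheme, hence an affine variety since it was already of finite type --- finite generation of $A$ is a \emph{conclusion}, not an input. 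Alternatively, avoid $\operatorname{Spec} A$ altogether: embed $X$ as an open in an affine $Y$, cover $X$ by finitely many principal affines $Y_{g_1},\ldots,Y_{g_n}$ with $g_i \in \Bbbk[Y]$, note that the ideal $(g_1,\ldots,g_n) \subset \mathcal O_X(X)$ has empty zero locus in $X$, invoke the hypothesis to get $1 = \sum a_i g_i$ in $\mathcal O_X(X)$, and conclude by the standard affineness criterion. Your treatment of part (ii) is correct.
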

\begin{theo}\label{theo:strgeom} Assume that $H \subseteq G$ is a closed inclusion of affine algebraic groups. The following three conditions are equivalent:
  \begin{enumerate}
  \item The subgroup $H$ is exact in $G$;
  \item The homogeneous space $G/H$ is affine;
    \item There is a total integral $\sigma: \Bbbk[H] \to \Bbbk[G]$. 
    \end{enumerate}
\end{theo}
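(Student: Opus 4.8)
The plan is to prove the cycle of implications $(1)\Rightarrow(2)\Rightarrow(3)\Rightarrow(1)$, drawing on the material already assembled in the survey so that almost no new computation is needed. For $(1)\Rightarrow(2)$: assuming $H$ is exact in $G$, the preceding Proposition tells us that $H$ is observable, so $G/H$ is quasi--affine by Theorem \ref{theo:geocarac}. To upgrade quasi--affineness to affineness I would invoke Theorem \ref{theo:convnull}: it suffices to show that every proper ideal $J\subsetneq {}^H\Bbbk[G]$ satisfies $J\Bbbk[G]\neq \Bbbk[G]$. Suppose instead $J\Bbbk[G]=\Bbbk[G]$, so $1=\sum f_i g_i$ with $f_i\in J$, $g_i\in \Bbbk[G]$. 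One wants to produce from this an equation $1=\sum f_i g_i'$ with $g_i'\in {}^H\Bbbk[G]$, contradicting $J$ proper. This is exactly the kind of statement the exactness hypothesis delivers: apply the exact functor ${}^H(-)$ to a suitable surjection of $(\Bbbk[G],H)$--modules built from the $f_i$ — e.g. the map $\Bbbk[G]^{\oplus n}\to \Bbbk[G]\cdot\{f_1,\dots,f_n\}=\Bbbk[G]$ (which is onto because the $f_i$ generate the unit ideal), take $H$--invariants, and conclude $1\in {}^H\Bbbk[G]$ is hit, i.e.\ $1=\sum f_i h_i$ with $h_i\in {}^H\Bbbk[G]$. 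Then $1\in J$, a contradiction.

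For $(2)\Rightarrow(3)$: if $G/H$ is affine, then $\Bbbk[G]^H={}^H\Bbbk[G]=\mathcal O_{G/H}(G/H)$ is an affine algebra and the quotient morphism $q\colon G\to G/H$ is an affine morphism; I want to produce a total integral $\sigma\colon\Bbbk[H]\to\Bbbk[G]$. The cleanest route is via Theorem \ref{theo:intobsinj}, which says a total integral with values in $\Bbbk[G]$ exists iff $H$ is strongly observable in $G$ iff $\Bbbk[G]$ is injective as an $H$--module — so it is enough to show $\Bbbk[G]$ is injective in $\modH$ when $G/H$ is affine. Here I would use the standard argument that for an affine quotient the multiplication map $G\times^H \text{(something)}$ trivializes locally: more concretely, cover $G/H$ by principal affine opens $D(\bar u)$ on which the $H$--bundle $G\to G/H$ is trivial, giving $\Bbbk[G]_{u}\cong \Bbbk[D(\bar u)]\otimes\Bbbk[H]$ as $H$--modules with trivial action on the first factor; since $\Bbbk[H]$ is $H$--injective, each localization is $H$--injective, and one patches these together (using that $G/H$ is covered by finitely many such opens and that a finite "subdirect" assembly of injectives along a Čech-type resolution stays injective) to conclude $\Bbbk[G]$ itself is $H$--injective. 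Alternatively, and perhaps more in the spirit of the survey, one can construct the integral directly: a linear functional $\alpha$ on $\Bbbk[G]$ with $\alpha(1)=1$ that is ${}^H\Bbbk[G]$--linear can be built because $G/H$ affine makes $q^{\#}\colon {}^H\Bbbk[G]\hookrightarrow \Bbbk[G]$ split as a map of ${}^H\Bbbk[G]$--modules (faithfully flat descent / the trace-like splitting coming from affineness), and then $\sigma(f)(x):=\alpha(x\cdot f)$ is the desired total integral as in the proof of Theorem \ref{theo:intobsinj}.

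For $(3)\Rightarrow(1)$: this is essentially already in hand. A total integral $\sigma\colon\Bbbk[H]\to\Bbbk[G]$ forces, by Theorem \ref{theo:intobsinj}, that $\Bbbk[G]$ is injective as an $H$--module, hence (condition (2) of Theorem \ref{theo:mainCPS}, whose equivalence with strong observability we may quote) that $H$ is strongly observable in $G$. It then remains to see strong observability implies exactness. Given a short exact sequence $0\to P\to Q\to R\to 0$ of $(\Bbbk[G],H)$--modules, left-exactness of ${}^H(-)$ is automatic; for the surjectivity of ${}^HQ\to{}^HR$, lift an element $r\in{}^HR$ to $q\in Q$, embed $R$ strongly into a $G$--module — or better, directly use that the integral $\sigma$ furnishes an averaging/Reynolds operator: from $\sigma$ and the $\Bbbk[G]$--module structure one builds a $\Bbbk[G]$--linear $H$--projection of each $(\Bbbk[G],H)$--module onto its $H$--fixed points (the construction of $\Lambda$ in the proof of Theorem \ref{theo:intobsinj}, transported along the $\Bbbk[G]$--action), and such a Reynolds operator immediately makes ${}^H(-)$ exact by the usual diagram chase.

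The main obstacle I anticipate is the gluing step in $(2)\Rightarrow(3)$: passing from local triviality of the $H$--bundle $G\to G/H$ over principal opens to a genuine global statement about $H$--injectivity of $\Bbbk[G]$ (or to a global splitting of $q^{\#}$) requires either a faithfully flat descent argument or a Čech-cohomology vanishing input, and care is needed because injectivity in $\modH$ is not a Zariski-local condition in an entirely naive sense. Going through Theorem \ref{theo:intobsinj} shifts the difficulty to showing $\Bbbk[G]$ is $H$--injective, which is where that care is spent; the other two implications are comparatively routine consequences of results already proved in the excerpt. I would therefore organize the write-up to spend most of its length on $(2)\Rightarrow(3)$ and dispatch $(1)\Rightarrow(2)$ and $(3)\Rightarrow(1)$ quickly by citation of Theorems \ref{theo:convnull}, \ref{theo:intobsinj} and \ref{theo:mainCPS}.
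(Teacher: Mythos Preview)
Your implications $(1)\Rightarrow(2)$ and $(3)\Rightarrow(1)$ coincide with the paper's: the paper runs exactly the argument you sketch with $\Phi:\bigoplus\Bbbk[G]\to\Bbbk[G]$ and Theorem~\ref{theo:convnull} for $(1)\Rightarrow(2)$, and for $(3)\Rightarrow(1)$ it builds the Reynolds-type projection $\mathcal R_M(m)=\sum\sigma\bigl(S(m_1)\bigr)m_0$ on each $(\Bbbk[G],H)$--module $M$, observes it is natural and lands in ${}^HM$, and concludes exactness by the diagram chase you indicate.

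The genuine gap is in your $(2)\Rightarrow(3)$, and you have correctly put your finger on it. The claim that $G\to G/H$ is Zariski--locally trivial is false in general (it is only \'etale/fppf--locally trivial unless $H$ is special in the sense of Serre), so the ``cover by principal opens where $\Bbbk[G]_u\cong\Bbbk[D(\bar u)]\otimes\Bbbk[H]$'' step does not go through as stated; patching \'etale--local injectivity into global $H$--injectivity is possible but is real work, not routine. Your alternative route is also not justified: affineness of $G/H$ gives faithful flatness of $\Bbbk[G]$ over ${}^H\Bbbk[G]$, not a ${}^H\Bbbk[G]$--linear splitting of $q^{\#}$, and the formula $\sigma(f)(x)=\alpha(x\cdot f)$ does not even type--check for $f\in\Bbbk[H]$ and $x\in G$.

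The paper sidesteps all of this by \emph{not} proving $(2)\Rightarrow(3)$ directly. Instead it closes the loop via $(2)\Rightarrow(1)$ and $(1)\Rightarrow(3)$. For $(2)\Rightarrow(1)$ it runs Haboush's sheaf argument: for $M\in\modH$ define the quasi--coherent sheaf $\mathcal I_M$ on $G/H$ by $U\mapsto{}^H\bigl(\mathcal O_G(\pi^{-1}(U))\otimes M\bigr)$; its global sections are $\operatorname{Ind}_H^G(M)$ and its stalk at $eH$ is $M$, so a short exact sequence in $\modH$ yields a short exact sequence of sheaves, and Serre's cohomological criterion for affineness gives exactness on global sections, hence exactness of induction, hence condition~(1). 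For $(1)\Rightarrow(3)$ the paper shows directly that $\Bbbk[G]$ is injective in $\modH$: given $\iota:M\hookrightarrow N$ and $\phi:M\to\Bbbk[G]$, set $X=\operatorname{Hom}_\Bbbk(M,\Bbbk[G])$ and $Y=\operatorname{Hom}_\Bbbk(N,\Bbbk[G])$, which are $(\Bbbk[G],H)$--modules with $\iota^*:Y\to X$ surjective; exactness gives ${}^HY\to{}^HX$ surjective, so $\phi$ lifts. Then Theorem~\ref{theo:intobsinj} supplies the total integral. This route uses only Serre's criterion and a one--line Hom--space trick, and avoids any local--triviality or descent input.
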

\begin{proof} We prove that (2) $\Rightarrow$ (1) folowing Haboush's argument in \cite{kn:matsuhab}. For a rational $G$--module $M$ and $U \subseteq G/H$ open in $G/H$, we consider the usual diagonal action of $H$ on $\mathcal O_G(\pi^{-1}(U)) \otimes M$ and define $\mathcal I_M$, the sheaf on $G/H$ such that $\mathcal I_M(U)={}^H(\mathcal O_G(\pi^{-1}(U)) \otimes M)$.  It is clear that the global sections of this sheaf is the induced module ${}^H(\Bbbk[G] \otimes M)$ and a direct computation shows that  the stalk of the sheaf $\mathcal I_M$ at $eH \in G/H$ is $M$.  Hence, it is clear that for an exact sequence $0\to P \to Q \to R \to 0 \in \modH$, the sequence $0\to \mathcal I_P \to \mathcal I_Q \to \mathcal I_R \to 0$ is also exact. In the situation that $G/H$ is affine, Serre's cohomological characterization of affineness guarantees that the sequence of global sections of the above sheaf is also exact. This means that the induction functor is exact and it follows easily that this implies that $H$ is exact in $G$. 

  The proof that (1) $\Rightarrow$ (2) is as follows, from the
  exactness hypothesis we deduce that $G/H$ is quasi affine. In order
  to apply Theorem \ref{theo:convnull} take $J \subsetneq
  {}^H\Bbbk[G]$ a proper ideal. In the case that $J\Bbbk[G]=\Bbbk[G]$,
  we can find $\{j_1,\cdots,j_n\} \subseteq J$ such that the morphism
  of $(\Bbbk[G],H)$ modules $\Phi:\bigoplus_{i=1}^n\Bbbk[G] \to
  \bigoplus_{i=1}^n \Bbbk[G]$, $\Phi(g_1,\cdots,g_n)=\sum g_ij_i$ is
  surjective.  Then, the morphism $\Phi:\bigoplus_{i=1}^n{}^H\Bbbk[G]
  \to \bigoplus_{i=1}^n {}^H\Bbbk[G]$ is also surjective and that
  means that $J={}^H\Bbbk[G]$.

  Next we prove that (1) $\Rightarrow$ (3).

  Let  $\iota: M\hookrightarrow  N$ be an inclusion of finite dimensional
rational $H$--modules and  consider 
the  diagram in  ${}_H{\mathcal M}{}$  
\[
\xymatrix { M \ar[d]_{\phi} \ar@{^{(}->}[r]^\iota & N
  \ar@{.>}[dl]^(0.3){{\widehat{\phi}}} \\ \Bbbk [G] & }
\]

Consider $X={\operatorname{Hom}}_{\Bbbk }\bigl(M,\Bbbk [G]\bigr)$ and
$Y=\operatorname{Hom}_\Bbbk \bigl(N,\Bbbk [G]\bigr)$ endowed with the
standard rational $(\Bbbk [G],H)$--module structure.  The inclusion
$\iota$ induces a surjective morphism of $(\Bbbk [G],H))$--modules.
From the exactness of $H$ in $G$, we conclude that
$\iota^*\bigl({}^H{Y}{}\bigr)={}^H{X}{}$. Any element
$\widehat{\phi} \in {}^H{Y}{}$ mapped into $\phi\in
{}^H{X}$ is the extension of $\phi$ we are looking for.
For the case of infinite dimensional $H$--modules a Zorn's Lemma type of argument does the job to extend the morphism in the above diagram. We have thus proved that $\Bbbk[G]$ is injective as an $H$--module. And this implies condition (3).

The proof that (3) $\Rightarrow$ (1) goes as follows. Take an arbitrary
$(\Bbbk[G],H)$--module $M$ and consider the map $\mathcal R_M:M
\to M : \mathcal R_M(m)=\sum \sigma(S(m_1))m_0$. It is easy to show
that $\mathcal R_M(M)={}^HM$ and that for a morphism $f:M \to N$ of $(\Bbbk[G],H)$--modules, $f \circ \mathcal R_M=\mathcal R_n \circ f$. From the commutativity of the following diagram:
\[\xymatrix{M\ar[r]^{\mathcal R_M}\ar[d]_{f}&{}^HM \ar[d]^{f|_{{}^HM}}
    \\N\ar[r]^{\mathcal R_N}&{}^HN,}\] we deduce that if $f$ is
surjective, so is the restriction $f|_{{}^HM}$. Hence $H$ is exact in
  $G$.
\end{proof}
\subsection{Strong observability and reductivity}\label{subsection:soandred}

In IMAQ, for example in Corollary 4.5 or in Remark 4.4, the notion of
strong observabity (viewed as an injetivity condition) is studied for
a closed inclusion $H \subset G$ in the case that $G$ is
reductive. This sort of considerations are also present in the
mentioned work of Haboush where (using different methods), similar
results are proved. For example in \cite[Proposition
  3.2]{kn:haboushred}, the author proves that \emph{if $H \subseteq G$
  is a closed inclusion of affine algebraic groups with $G$ reductive,
  then $G/H$ is affine if and only if $H$ is reductive}\footnote{The difficult part is the conclusion of the reductivity of $H$ from the geometric hypothesis about the quotient space $G/H$.}. This
assertion is also known as \emph{Matsushima's criterion} and appeared
for the first time in \cite{kn:matsu}, and later proofs appeared in
work by Borel and Harish--Chandra, Bialynicki-Birula, Richardson,
Haboush, Cline Parshall and Scott (IMAQ), etc. The last three works, are valid in arbitrary characteristic and were published more or less simultaneously. In the introduction to Richardson's paper \cite{kn:rich} appears the following citation of a letter from Borel to the author (1977):    
\begin{center}
  \begin{minipage} {.8\textwidth}{\em
    ... The fact that $G/H$ affine implies that $H$ is reductive, has
      been know for almost 15 years, although not formally published. But this was only because of the difficulty to give references for some necessary foundational material on \'etale cohomology. In fact, using the Chevalley groups schemes over $\mathbb Z$ it can be seen that the \'etale cohomology$\mod \mathbb Z/\ell \mathbb Z \,\,,\,\,\ell \text{ prime } \neq
      \operatorname{char} \Bbbk$ of a reductive $\Bbbk$--group, is the same as the ordinary cohomology of the corresponding complex group. If one takes for granted the existence of a spectral sequence for the fibration of a group by a closed subgroup, then it is clear that the proof given in my joint paper with Harish--Chandra goes over verbatim for arbitrary characteristic, using \'etale cohomology. This was pointed out to me by Grothendieck (in 1961 as I remember it) as soon as I outlined this proof to him. I have always found mildly amusing that the so called 'algebraic proof'  of Bialynicki--Birula is restricted to characteristic zero, while the 'trascendental' one is not. The fact mentioned above about the cohomology of reductive groups is proved by M. Raynaud (Inv. Mat. 6 (1968)) but, apart from that, it seems difficult even now to give clear-cut references to the basic facts on \'etale cohomology needed here, so a more direct proof such as yours is still useful. 
  }\end{minipage}
\end{center}

Nowadays it is clear that the mentioned criterion admits for arbitrary
characteristic, proofs that are much more elementary than the one
suggested by Grothendieck using \'etale cohomology.  In
\cite{kn:firstwork,kn:secondwork} the authors propose a different
perspective that yields an easy proof for the above result and many
others. For that, one has to reinterpret the condition of the
exactness of $K$ in $H$ as an assertion on the linear reductivity of
the action of $K$ on $H$ --or on $\Bbbk[H]$. In this case if we look at
the trivial action of $H$ on $\Bbbk$ we obtain the concept of linear
reductivity. Using this viewpoint, Matsushima's criterion can be read
as follows: in the hypothesis that the action of $H$ on $\Bbbk$ is
linearly reductive we have that the action of $K$ on $H$ is linearly
reductive, if and only if the action of $K$ on $\Bbbk$ is linearly
reductive\footnote{In order to simplify the assertions we concentrate in this survey in the situation of linearly reductive actions (see Observation \ref{obse:genred})}.

\begin{defi}\label{defi:crucial2}\
\begin{enumerate}
\item Let $H$ be an affine algebraic group and $R$ a
rational $H$--module algebra. We say that the action of $H$
on $R$ is \emph{linearly reductive} if for every triple $(M,
J, \lambda)$ where $M\in (R,H)-\operatorname{mod}$, $J \subseteq R$ is an
$H$--stable ideal and $\lambda:M \rightarrow R/J$ is a
surjective morphism of $(R,H)$--modules; there exists an
element $m \in {}^HM$, such that $\lambda(m)=1+J \in R/J$.
In the context above, if the action of $H$ on $R$ is given,
we say that $(R,H)$ is a linearly reductive pair.
\item In the case that $R= \Bbbk [X]$ and the action of
$H$ on $R$ is linearly reductive we say that the action of
$H$ on $X$ is linearly reductive and also that the pair
$(H,X)$ is linearly reductive.
\end{enumerate}
\end{defi}
\begin{obse}\label{obse:genred}  A generalization of the notion of linearly reductive action to the concept of geometrically reductive action, can be defined (work in progress) and some of the considerations of the next theorem remain valid for this situation.
  \end{obse}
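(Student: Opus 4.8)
The plan is to copy Definition~\ref{defi:crucial2} while relaxing the requirement that the distinguished element $1+J$ be reached by a genuine invariant, permitting instead an invariant that lives in a symmetric power. Explicitly, I would declare the action of $H$ on the rational $H$--module algebra $R$ to be \emph{geometrically reductive} when, for every triple $(M,J,\lambda)$ as in Definition~\ref{defi:crucial2}, there exist an integer $n\geq 1$ and an element $m\in{}^H\bigl(\operatorname{Sym}^n_R M\bigr)$ whose image under the $(R,H)$--morphism $\operatorname{Sym}^n_R M\to\operatorname{Sym}^n_R(R/J)\cong R/J$ induced by $\lambda$ equals $1+J$. Allowing only $n=1$ recovers linear reductivity, so linear reductivity implies geometric reductivity and the new notion is strictly weaker; and when $R=\Bbbk$ carries the trivial action this should reduce to the classical Nagata--Mumford notion, whence reductive groups become geometrically reductive in arbitrary characteristic (\cite{kn:haboushred}).

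First I would verify that the definition is well posed. The multiplication of the $R$--algebra $R/J$ identifies $\operatorname{Sym}^n_R(R/J)$ with $R/J$, sending the $n$--th symmetric power of the generator $1+J$ to $1+J$ itself (as $1^{n}=1$); and functoriality of $\operatorname{Sym}^n_R$ in the $(R,H)$--module argument turns $\lambda$ into the displayed morphism of $(R,H)$--modules. The hard part, and the step I expect to be the main obstacle, is the compatibility in positive characteristic: there $\operatorname{Sym}^n_R M$, its divided--power companion, and the operation of taking $H$--invariants interact delicately, so one must pin down a single construction and then check that geometric reductivity of $H$ on $\Bbbk$ coincides with the classical geometric reductivity of the group $H$. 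That verification is precisely the Nagata--Mumford--Haboush circle of results, which is why the generalization is flagged as work in progress.

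Next I would isolate \emph{which} assertions of the next theorem (the present--language form of Matsushima's criterion) survive the weakening. Every implication whose proof uses only the existence of \emph{some} invariant mapping onto $1+J$ passes through unchanged: in particular the Nullstellensatz argument of Theorem~\ref{theo:convnull} still runs, because producing $m\in{}^H(\operatorname{Sym}^n_R M)$ over $1+J$ exhibits a power of the unit inside the relevant image, so a proper invariant ideal cannot inflate to the unit ideal of $\Bbbk[G]$. Consequently the geometric conclusion --- affineness of the quotient together with finite generation of invariants for geometrically reductive $H$ --- carries over, and this is exactly Matsushima's criterion for reductive groups in arbitrary characteristic (\cite{kn:matsu,kn:haboushred}).

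Finally, the reason the observation promises only that \emph{some} considerations remain valid is that the homological half of the theory breaks. A geometrically reductive action returns a lift only after passage to a symmetric power, so the invariants functor $M\mapsto{}^HM$ is no longer exact; hence the equivalence with exactness of $H$ in $G$ and with the existence of a total integral --- conditions (1) and (3) of Theorem~\ref{theo:strgeom} --- has no literal analogue. I would therefore formulate the generalization as: over a geometrically reductive pair the \emph{geometric} and \emph{finite generation} statements persist verbatim, whereas the \emph{exactness} and \emph{integral} statements survive only in an ``up to a power'' form, leaving the precise version to the announced work in progress.
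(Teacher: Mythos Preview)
This statement is an \emph{observation} (a remark), not a theorem, and the paper supplies no proof for it: it is simply an announcement that a notion of geometrically reductive action can be defined and that some parts of Theorem~\ref{theo:characterizations} carry over, with the details deferred to work in progress. There is therefore nothing in the paper to compare your proposal against.

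That said, two points about your elaboration are worth flagging. First, you misidentify ``the next theorem'': it is Theorem~\ref{theo:characterizations} (the five equivalent characterizations of a linearly reductive action), not Matsushima's criterion, which appears only afterwards in the numbered list of consequences. Your discussion of what survives should be keyed to those five conditions --- exactness of the invariants functor, existence of a total integral, injectivity of $R$ in $\modH$ --- rather than to Theorem~\ref{theo:strgeom} or Theorem~\ref{theo:convnull}. Second, your proposed definition via $\operatorname{Sym}^n_R M$ is a plausible generalization in the spirit of Nagata--Mumford, and your instinct that the exactness and integral conditions survive only ``up to a power'' is reasonable; but since the paper explicitly labels this as work in progress and gives no definition, your sketch is speculative rather than a proof of anything the paper asserts.
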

The proof of the theorem that follows is similar to others presented
before and we omit it (compare with the results in Section 4).
\begin{theo}\label{theo:characterizations} Let $H$ an affine
algebraic group and $R$ a rational $H$--module
algebra. Then, the following conditions are equivalent:
\begin{enumerate}
\item The action of $H$ on $R$ is linearly reductive.
\item If $\varphi:M \rightarrow N$ is a surjective morphism
in the category $\modRH$, then $\varphi\bigl({}^H\!M\bigr)={}^H\!N$.
\item There exists a total integral $\sigma:\Bbbk [H]
\rightarrow R$.
\item The $H$--module algebra $R$ is an injective object in
the category $\modH$.
\item Every object $M \in \modRH$ is injective in $\modH$.
\end{enumerate}
Morever, in the case that $H=U$ is unipotent, the action of $U$ on $R$
is linearly reductive, if and only if there is a multiplicative normal integral from $\Bbbk[U]$ into $R$.
\end{theo}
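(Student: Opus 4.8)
The plan is to establish the five equivalences through the trivial implication $(2)\Rightarrow(1)$ (apply (2) to $\lambda\colon M\to R/J$ and note $1+J\in{}^H(R/J)$) together with the cycle $(1)\Rightarrow(2)\Rightarrow(4)\Rightarrow(3)\Rightarrow(5)\Rightarrow(2)$, and then to handle the unipotent addendum separately. Every step will follow closely the argument already used for the special case $R=\Bbbk[G]$ in the proofs of Theorems~\ref{theo:intobsinj} and \ref{theo:strgeom}, systematically replacing ``$\Bbbk[G]$'' by ``$R$'' and the comultiplication of $\Bbbk[G]$ by the coaction $\chi_R\colon R\to R\otimes\Bbbk[H]$.

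First, for $(1)\Rightarrow(2)$: given a surjection $\varphi\colon M\to N$ in $\modRH$ and $n\in{}^HN$, I set $J=\operatorname{Ann}_R(n)=\{r\in R:rn=0\}$; since $n$ is $H$--fixed and $R$ is a rational $H$--module algebra, $J$ is an $H$--stable ideal, and $r\mapsto rn$ is an isomorphism of $(R,H)$--modules $R/J\cong Rn\subseteq N$ carrying $1+J$ to $n$, so applying (1) to the triple $\bigl(\varphi^{-1}(Rn),J,\varphi|_{\varphi^{-1}(Rn)}\bigr)$ gives $m\in{}^HM$ with $\varphi(m)=n$. Next, for $(2)\Rightarrow(4)$ I must show every morphism $\phi\colon M\to R$ of $\modH$ extends along every monomorphism $M\hookrightarrow N$ of $\modH$; by the Zorn's lemma argument used in the proof of Theorem~\ref{theo:strgeom} it suffices to treat $M,N$ finite dimensional, and then $\operatorname{Hom}_\Bbbk(N,R)\to\operatorname{Hom}_\Bbbk(M,R)$ (restriction of maps) is a \emph{surjective} morphism of $\modRH$, with $R$ acting pointwise on the target and $H$ acting diagonally (by the contragredient action on $M^{*},N^{*}$) and with ${}^{H}\!\operatorname{Hom}_\Bbbk(M,R)=\operatorname{Hom}_{\modH}(M,R)$; so (2) makes $\operatorname{Hom}_{\modH}(N,R)\to\operatorname{Hom}_{\modH}(M,R)$ onto, which is exactly the extension property, i.e.\ $R$ is injective in $\modH$. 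Finally $(4)\Rightarrow(3)$: the constant map $\Bbbk\hookrightarrow\Bbbk[H]$, $1\mapsto1$, is a monomorphism of $\modH$ and $1_R\in{}^HR$, so extending $\Bbbk\to R$, $1\mapsto1_R$, along it by injectivity of $R$ produces a morphism $\sigma\colon\Bbbk[H]\to R$ of $\modH$ with $\sigma(1)=1$, that is, a total integral.

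The computational core is $(3)\Rightarrow(5)$: for $M\in\modRH$ with coaction $\chi_M(m)=\sum m_{(0)}\otimes m_{(1)}$ I define $\Lambda_M\colon M_0\otimes\Bbbk[H]\to M$ by $\Lambda_M(m\otimes f)=\sum m_{(0)}\cdot\sigma\!\bigl(S(m_{(1)})f\bigr)$, where $S$ is the antipode of $\Bbbk[H]$ and $\sigma(S(m_{(1)})f)\in R$ acts on $m_{(0)}\in M$ via the $R$--module structure; a Sweedler computation using coassociativity of $\chi_M$, the antipode axiom in $\Bbbk[H]$ and $\sigma(1)=1$ gives $\Lambda_M\circ\chi_M=\id_M$, and a second computation using the $H$--equivariance of $\sigma$, the module--algebra axioms for $\Bbbk[H]$ and for $M$, and the same elementary translation identity as in the proof of Theorem~\ref{theo:intobsinj}, shows $\Lambda_M$ is a morphism of $\modH$ ($M_0$ carrying the trivial action); hence $\chi_M$ exhibits $M$ as a direct summand in $\modH$ of the injective object $M_0\otimes\Bbbk[H]$, so $M$ is injective in $\modH$. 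Then $(5)\Rightarrow(2)$: if $\varphi\colon M\to N$ is surjective in $\modRH$, its kernel $K$ is again an object of $\modRH$, hence injective in $\modH$ by (5), and applying ${}^{H}(-)=\operatorname{Hom}_{\modH}(\Bbbk,-)$ to $0\to K\to M\to N\to0$ with $\operatorname{Ext}^1_{\modH}(\Bbbk,K)=0$ yields ${}^HM\twoheadrightarrow{}^HN$. I expect the only real obstacle to be bookkeeping: getting the Sweedler indices right when verifying the $H$--equivariance of $\Lambda_M$ in $(3)\Rightarrow(5)$ (this is exactly where the module--algebra axiom for $R$ is used), and checking in $(2)\Rightarrow(4)$ that restriction is a morphism of $(R,H)$--modules with $H$--invariant part $\operatorname{Hom}_{\modH}(M,R)$.

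For the unipotent addendum, one direction is immediate: a multiplicative normal integral is in particular a total integral, hence gives (3). Conversely, assume the action of $U$ on $R$ is linearly reductive, so $R$ is injective in $\modH$ by (4), and write $\Bbbk[U]=\Bbbk[X_1,\dots,X_n]$ with a $U$--stable filtration $P_i=\Bbbk[X_1,\dots,X_i]$ for which $u\cdot X_i\equiv X_i\pmod{P_{i-1}}$ for all $u\in U$; then build $\sigma$ as an algebra homomorphism by induction on $i$, extending the $U$--module map $P_{i-1}\to R$ to $P_i$ by injectivity of $R$ and then replacing it by the algebra morphism that agrees with it on $X_1,\dots,X_i$, which one checks remains $U$--equivariant and normal — this is verbatim the construction carried out in the proof of \cite[Thm.~3.1]{kn:CPS} recalled above.
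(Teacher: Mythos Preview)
Your proof is correct and follows precisely the approach the paper itself indicates: the paper omits the proof of this theorem, stating only that it ``is similar to others presented before'' and referring to Section~4 (Theorems~\ref{theo:intobsinj} and~\ref{theo:strgeom} and the proof of \cite[Thm.~3.1]{kn:CPS}). Your argument is exactly the intended transcription of those proofs with $\Bbbk[G]$ replaced by the general rational $H$--module algebra $R$ and the comultiplication replaced by the coaction $\chi_R$, so there is nothing to compare.
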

It is clear that the trivial action of $H$ on $\Bbbk$ is linearly reductive, if and only if $H$ is a lineraly reductive affine algebraic group. 

Once we free the notion of obervability of the restriction to the
group/subgroup situation, we acquiere a degree of flexibility that seems to provide a better understandig of the main issues of this
area. In that sense we mention below (without proofs) a few other
results from \cite{kn:secondwork}.

\begin{enumerate}
\item Let $K \subseteq H$ be a closed inclusion of affine algebraic groups. The following two conditions are equivalent: \begin{enumerate} \item The action of $K$ in $H$ and the action of $H$ in $H/K$ are linearly reductive \item $K$ is linearly reductive.\end{enumerate}
\item Let $K \subseteq H$ be as above and $R$ a rational $K$--module algebra and consider $R_H=\operatorname{Ind}_K^H(R)$ the induced $H$--module algebra. Assume moreover that the action of $K$ on $H$ is lineraly reductive. Then if the action of $H$ on $R_H$ is linearly reductive, so is the action of $K$ on $R$. For the definition of the functor $\operatorname{Ind}_K^H$ see Section \ref{section:obsadj}.
  \item ({\em Generalized Matsushima's criterion.}) Suppose that we have $K \subset H$ a pair given by a group and a subgroup, and that $R$ is an $H$--module algebra with the property that the action of $H$ on $R$ is linearly reductive. Then if the action of $K$ on $H$ is linearly reductive, then the action of $K$ on $R$ is linearly reductive. 
  \end{enumerate}
\section{Observable adjunctions}\label{section:obsadj}

The concept of \emph{observable adjunction} and of \emph{observable module category} appeared in 2006 (see \cite{kn:obscat}) as a direct product of the following observations based in the consideration of the monoidal categories $\modG$ and $\modH$ instead of the groups $G$ and $H$.

\medskip
Let $H \subseteq G$ be a closed inclusion of affine algebraic groups
and let $\mathcal D= {}_H\mathcal M$ and $\mathcal C= {}_G\mathcal M$
be the corresponding categories of rational representations. Call
${\mathbb L}:\mathcal C \rightarrow \mathcal D$ the restriction
functor, usually denoted as $\operatorname{Res}^H_{G}$, from rational
$G$--modules to $H$--modules.

It is well known that the monoidal functor ${\mathbb L}$ (see Definition \ref{defi:basictensor}) has a
right adjoint that is usually named as {the induction functor},
denoted as $\operatorname{Ind}_H^G$ and herein abbreviated as $\mathbb
R$.
\begin{defi} \label{def:induction} If $H \subseteq G$ is a closed inclusion of affine algebraic groups and $M \in {}_H\mathcal M$, we endow $\Bbbk[G] \otimes M$ with a structure of $H$--module acting on the left, and with a left structure of $G$--module where $x \in G$ acts as $x^{-1}$ on the right in the first tensor factor, and define $\mathbb R(M)$ as the $G$--module $\mathbb R(M):=\operatorname{Ind}_H^G(M):={}^H(\Bbbk[G] \otimes M)$. If $f:M \to M'$ is a morphism of rational $H$--modules, we define $\operatorname{Ind}_H^G(f):=(\operatorname{id} \otimes f)|_{{}^H(\Bbbk[G] \otimes M)}$. \end{defi}

It is well known (see for example \cite[Corollary
  7.7.12]{kn:nosotros2}) that $\mathbb L \dashv \mathbb R$
(i.e. $\mathbb L$ is the left adjoint of $\mathbb R$) or in explict
terms that: for all $M \in \modH$ and $N \in \modG$ there is a natural
isomorphism (in the category of $\Bbbk$--spaces)
$\operatorname{Hom}_H(\operatorname{Res}_G^H(N),M) \cong
\operatorname{Hom}_G(N,\operatorname{Ind}_H^G(M))$. In the classical
literature the above isomorphism was called the {\em Reciprocity law}.

The counit of the adjunction is the following family of maps:

\begin{equation}\label{eqn:counit}  \varepsilon_M:{}^H(\Bbbk[G] \otimes M)\to M  \quad,\quad \varepsilon_M(\sum f_i \otimes m_i)=\sum f_i(1)m_i \text{ for }\, \sum f_i \otimes m_i \in {}^H(\Bbbk[G] \otimes M).
\end{equation}

\medskip

The observability can be characterized in terms of the natural transformation $\varepsilon$.

\begin{lema}\label{lema:obsvarepsilon} In the above situation $H \subseteq G$ is observable if and only if for all $M \in \modH$, $\varepsilon_M:{}^H(\Bbbk[G] \otimes M)\to M $ is surjective.
\end{lema}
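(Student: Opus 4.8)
The plan is to prove both directions using the characterization of observability already available in the excerpt — principally Theorem \ref{theo:obse1} (extension ``up to an extendible character'', with the refinement that a simple $H$--module embeds into a simple $G$--submodule of $\Bbbk[G]$ at a prescribed nonvanishing point) and Theorem \ref{theo:obse2} (observability $\iff$ all characters of $H$ are extendible). I will also use the explicit formula \eqref{eqn:counit} for the counit $\varepsilon_M$.

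\medskip
\noindent\textbf{The easy direction: $H$ observable $\Rightarrow$ $\varepsilon_M$ surjective.}
First I would reduce to $M$ finite dimensional, since $\modH$ is the filtered union of its finite dimensional submodules and $\varepsilon$ is natural, so surjectivity on all finite dimensional $M$ forces it in general (given $m\in M$, pick a finite dimensional submodule $M_0\ni m$ and lift $m$ through $\varepsilon_{M_0}$, then push forward along $M_0\hookrightarrow M$). For finite dimensional $M$, fix $0\neq m\in M$; I want a preimage of $m$ under $\varepsilon_M$. By Theorem \ref{theo:obse2}, since $H$ is observable every character of $H$ is extendible; combined with Theorem \ref{theo:obse1} (the ``moreover'' and the last sentence), applied to the cyclic submodule generated by $m$, one gets an $H$--embedding $\iota: M' \hookrightarrow \Bbbk[G]$ for a suitable submodule $M'\ni m$ with $\iota(m)(1)\neq 0$; rescaling, assume $\iota(m)(1)=1$. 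Now consider the element $\sum_j \iota(e_j)\otimes e_j^* $ ... more cleanly: dualize. Given $m\in M$ choose $\alpha\in M^*$ with $\alpha(m)=1$; the map $m'\mapsto (\alpha \mid m')$ is an $H$--module map $M\to \Bbbk[G]$ (by the identity $x\cdot(\alpha\mid m')=\alpha\mid(x\cdot m')$ recorded in the Notation section), but this uses that $M$ embeds into a $G$--module — which, twisted, it does. The slick route: pick a basis $e_1,\dots,e_n$ of $M$ with $e_1=m$, let $\beta:M\to\Bbbk[G]$ be an $H$--equivariant linear map with $\beta(m)(1)=1$ (available by the preceding remarks); then $\sum_i e_i\otimes \beta^*(e_i^*)$... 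Rather than belabor this, here is the cleanest argument: it suffices to show $\varepsilon_{\Bbbk[H]}$ hits $1\in\Bbbk[H]$ is not quite right either. I will instead argue directly: an element of $\Bbbk[G]\otimes M$ of the form $\xi=\sum_i g_i\otimes e_i$ lies in ${}^H(\Bbbk[G]\otimes M)$ iff $\sum_i (x\cdot g_i)\otimes(x\cdot e_i)=\sum_i g_i\otimes e_i$ for all $x\in H$, and then $\varepsilon_M(\xi)=\sum_i g_i(1)e_i$. Take an $H$--equivariant linear map $\beta\colon M\to\Bbbk[G]$ with $\beta(m)(1)=1$ (exists: embed $M$, twisted by an extendible — hence, by observability, invertible up to extendibility — character, into $\Bbbk[G]$ as in Theorem \ref{theo:obse1}, untwist using the extendibility of the inverse character exactly as in the proof of Theorem \ref{theo:obse2}); set $\xi=\sum_i \beta(e_i)\otimes e_i^{**}$ under $M\cong M^{**}$... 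I will write $\xi := (\beta\otimes\id)(t)$ where $t\in M^*\otimes M$ is the canonical element, viewed in $\Bbbk[G]\otimes M$ after the identification $M^*\otimes M \cong \mathrm{End}(M)\ni \id$; $H$--invariance of $t$ and equivariance of $\beta$ give $\xi\in{}^H(\Bbbk[G]\otimes M)$, and $\varepsilon_M(\xi)=\sum_i \beta(e_i)(1)\,e_i$ — choosing the basis and $\beta$ so that $\beta(e_i)(1)=\delta_{i1}$ yields $\varepsilon_M(\xi)=m$.

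\medskip
\noindent\textbf{The converse: $\varepsilon_M$ always surjective $\Rightarrow$ $H$ observable.}
By Theorem \ref{theo:obse2}, it is enough to show every character $\chi$ of $H$ is extendible, i.e.\ to produce $f\in\Bbbk[G]$ with $f(1)=1$ and $x\cdot f=\chi(x)f$ for all $x\in H$. Apply the surjectivity hypothesis to the one--dimensional $H$--module $M=\Bbbk_\chi$. Then $\varepsilon_{\Bbbk_\chi}\colon {}^H(\Bbbk[G]\otimes\Bbbk_\chi)\to\Bbbk_\chi$ is onto, so there is $\xi\in{}^H(\Bbbk[G]\otimes\Bbbk_\chi)$ with $\varepsilon_{\Bbbk_\chi}(\xi)$ equal to the canonical generator. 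Writing $\xi = f\otimes 1_\chi$ (the tensor factor $\Bbbk_\chi$ is one dimensional), the condition $\xi\in{}^H(\Bbbk[G]\otimes\Bbbk_\chi)$ unwinds, using the twisted $H$--action, to exactly $x\cdot f = \chi(x)^{-1} f$ for all $x\in H$, and $\varepsilon$ being a nonzero scalar forces $f(1)\neq0$; after rescaling, $f(1)=1$. Thus $\chi^{-1}$ is extendible; running the argument with $\chi^{-1}$ in place of $\chi$ shows $\chi$ is extendible too. Hence all characters of $H$ are extendible, and Theorem \ref{theo:obse2} gives that $H$ is observable in $G$.

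\medskip
\noindent\textbf{Main obstacle.} The one genuinely delicate point is the forward direction's construction of the $H$--equivariant map $\beta\colon M\to\Bbbk[G]$ taking a prescribed value at $1$: Theorem \ref{theo:obse1} only gives an embedding of $M$ twisted by an extendible character $\chi$, and to remove the twist one needs $\chi^{-1}$ extendible — which is precisely the content of observability via Theorem \ref{theo:obse2}. So the real work is plumbing Theorems \ref{theo:obse1}–\ref{theo:obse2} together with a careful bookkeeping of the twisting, plus the elementary (but notation-heavy) verification that the canonical element of $M^*\otimes M$ pushes to an $H$--invariant of $\Bbbk[G]\otimes M$ whose image under \eqref{eqn:counit} is $m$. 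The converse direction is essentially immediate once one correctly reads off the invariance condition on $\Bbbk[G]\otimes\Bbbk_\chi$.
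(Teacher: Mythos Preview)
Your converse direction is correct and is essentially the paper's argument: the paper applies the hypothesis to $\Bbbk_{\chi^{-1}}$ directly, you apply it to $\Bbbk_\chi$ and then swap $\chi\leftrightarrow\chi^{-1}$ --- same thing.

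The forward direction, however, has a genuine variance error. You construct an $H$--equivariant $\beta\colon M\to\Bbbk[G]$ and then form $\xi=\sum_i\beta(e_i)\otimes e_i$, claiming this lies in ${}^H(\Bbbk[G]\otimes M)$ because it is ``$(\beta\otimes\id)$ applied to the canonical element $t\in M^*\otimes M$''. But $\beta$ is defined on $M$, not on $M^*$, so $(\beta\otimes\id)(t)$ does not typecheck; and the element $\sum_i\beta(e_i)\otimes e_i$ you actually wrote down is \emph{not} $H$--invariant in general: both tensor factors transform covariantly under $H$, so the weights add rather than cancel. (Try $H=\Bbbk^\times$ acting on a two--dimensional $M$ with weights $\pm 1$.) The canonical element that \emph{is} $H$--invariant is $\sum_i e_i^*\otimes e_i\in M^*\otimes M$, so what you need is an $H$--map $\gamma\colon M^*\to\Bbbk[G]$ whose evaluation--at--$1$ functional on $M^*$ equals $m\in M^{**}$; then $\xi=\sum_i\gamma(e_i^*)\otimes e_i$ does the job. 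This is fixable, but the repair amounts to applying observability to $M^*$ rather than to $M$ --- precisely the dualization your argument was missing.

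The paper sidesteps all this bookkeeping. It observes that observability (every $H$--module embeds $H$--equivariantly in the restriction of some $G$--module) is, by dualizing in finite dimensions, equivalent to: every $H$--module is an $H$--equivariant \emph{quotient} of the restriction of some $G$--module $N$. Given such a surjection $\phi\colon N|_H\twoheadrightarrow M$, the adjunction $\mathbb L\dashv\mathbb R$ (Frobenius reciprocity) produces a $G$--map $\widetilde\phi\colon N\to\operatorname{Ind}_H^G(M)$ with $\varepsilon_M\circ\widetilde\phi=\phi$; since $\phi$ is onto, so is $\varepsilon_M$. This is a two--line categorical argument with no explicit elements, and it is what your repeated backtracking was circling around without quite landing on.
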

\proof

We prove that if for all $M \in {}_H \mathcal M$, the counit
$\varepsilon_M: {}^H(\Bbbk[G] \otimes M) \rightarrow M$ is
surjective, then $H$ is observable in $G$.

\medskip

We use the characterization in terms of extendible
characters.  Let $\chi$ a character of $H$, consider the character
$\chi^{-1}$ and write as $\Bbbk_{\chi^{-1}}$ the one dimensional
$H$--module defined by $\chi^{-1}$.

It is not hard to see that \[\operatorname{Ind}_H^G(\Bbbk_{\chi^{-1}})=
\{f \in \Bbbk[G]: x\cdot f= \chi(x)f, \forall x \in H\}= \Bbbk[G]_\chi\]   and that $\varepsilon:\Bbbk[G]_\chi
\rightarrow \Bbbk$ is the evaluation at the identity element of $G$.

  Using the surjectivity of $\varepsilon$  we can guarantee the existence of  $f\in \Bbbk[G]_\chi$ such that $f(1)=1$ and then $f$ is a non zero $\chi$--semi invariant.

 Next we show that if $H \subseteq G$ es observable then $\varepsilon$ is surjective for all $M \in \modG$.

First observe that if every $H$--representation $M$ can be extended to
a $G$--representation $N$, $M \subseteq N$, by dualization every
$H$--representation can be obtained as the projection of a
$G$--representation.  Hence it is clear that $H \subseteq G$ is
observable, if and only if for an arbitrary $H$--module $M$ there is a
$G$--module $N$ and a surjective morphism of $H$--modules such that $N
\twoheadrightarrow M$.
    \medskip

In this situation the universal property of the adjunction guarantees the existence of a map as in the diagram. 
\begin{center}
$\xymatrix{& \operatorname{Ind}_H^G(M) \ar[d]^{\varepsilon_M} \\ 
N \ar@{>>}[r] \ar@{-->}[ur]&M}$
\end{center}
 
\medskip
The surjectivity of the horizontal map implies the surjectivity of the vertical map $\varepsilon_M$.  
\qed
 
The above result is the justification for the following definition of \emph{obserevable action}.
First we introduce some nomenclature.

\begin{defi}\label{defi:basictensor}
   A monoidal category is a sextuple $\mathcal C= (\mathcal C,
   \otimes, \Bbbk, \Phi, \ell,r)$ where $\mathcal C$ is a category,
   $\otimes: \mathcal C \times \mathcal C \rightarrow \mathcal C$ is a
   functor, $\Bbbk$ is a fixed object, the unit; $\Phi$ is a natural
   isomorphism: the associativity constraint with components
   $\Phi_{c,d,e}: (c \otimes d) \otimes e \rightarrow c \otimes (d
     \otimes e)$, $\ell$ and $r$ are the unit constraints,
   that are natural isomorphisms with components $r_c: c \otimes
     \Bbbk \rightarrow c$ and $\ell_c: \Bbbk \otimes c \rightarrow
     c$.  Moreover, all these data satisfy certain coherence
   conditions --commutative diagrams (see MacLane's classic book: {\em Categories for the working mathematician}: \cite{kn:CWM}).

   If $\mathcal C$ and $\mathcal D$ monoidal categories and
   $T:\mathcal C\to\mathcal D$ is a functor a (strong) monoidal
   structure in $T$ is a natural isomorphism $T(c) \otimes T(d)
   \rightarrow T(c \otimes d)$ and an isomorphism $\Bbbk \rightarrow
   T(\Bbbk)$ with certain coherence conditions (see Joyal and
   Street:  \emph{Braided tensor categories.} \cite{kn:JS}).  A
   monoidal functor is a functor together with a monoidal structure.

   \smallskip
   \noindent
Given a monoidal category, a $\mathcal C$--module category is a category $\mathcal M$ together with a functor $\boxtimes: \mathcal C \times \mathcal M \to \mathcal M$ and natural isomorphisms \[\mu_{x,y,m}:(x \otimes y)\boxtimes m \to x \boxtimes (y \boxtimes m)\,,\, \lambda_m: \Bbbk \boxtimes m \to m,\] with compatibility conditions that we omit and involve the associativity constraint $\Phi$ and also the left and right unit constrains $\ell,r$. 
\end{defi}

From now on we assume that all categories are $\Bbbk$--linear and that the tensor structures and associated natural transformation are compatible with the linear structure.

\begin{defi} A non--trival module category over a tensor category $\mathcal C$ is said to be simple if any proper submodule category is trivial. The trivial module category is the category $\mathcal M=0$.  
  \end{defi}

\begin{defi}\label{defi:obsadjunction}
 Let $\mathcal C, \mathcal D$ be monoidal categories and  $\mathbb L: \mathcal C
\rightarrow \mathcal D$ a monoidal functor. Suppose that $\mathbb L$ admits
a right adjoint functor $\mathbb R: \mathcal D \rightarrow \mathcal C$.
 and call $\varepsilon_d: \mathbb L\mathbb Rd \Rightarrow d$ the counit. 
If $\varepsilon: \mathbb L\mathbb R \Rightarrow \id: \mathcal D \rightarrow \mathcal D$
  is a {surjective} natural transformation, we say that
   {$\mathcal D$ is observable in $\mathcal C$ and
    that the pair $(\mathbb L,\mathbb R)$ observes $\mathcal D$ in $\mathcal C$}.
\end{defi}
\begin{defi} In the above context  we endow $\mathcal D$ with a structure of $\mathcal C$ module category by the following rule: $\boxtimes:\mathcal C \times \mathcal D \to \mathcal D$ is $c \boxtimes d:=\mathbb L(c) \otimes_{\mathcal D} d$.
  \end{defi}

The following theorem illustrates the use of this concept in the theory of module categories.

\begin{theo}\cite[Theorem 2.3]{kn:obscat}
    Given an observable adjunction $\mathbb L \dashv \mathbb R$, $\mathbb L:\mathcal C \to \mathcal D\,,\, \mathbb R: \mathcal D \to \mathcal C$, if $\mathcal D$ is ind--rigid and the adjunction is observable  then $\mathcal D$ is simple as a $\mathcal C$--module category. 
\end{theo}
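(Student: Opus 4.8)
The plan is to prove something slightly stronger: every \emph{nonzero} $\mathcal C$--submodule category $\mathcal M$ of $\mathcal D$ (i.e.\ one containing some nonzero object) coincides with $\mathcal D$. Since the only remaining submodule category is then the trivial one $\mathcal M=0$, this shows that $\mathcal D$ is simple. Throughout I would use that a submodule category is full and closed under subquotients, direct sums and the action $c\boxtimes -$ for $c\in\mathcal C$, and that, by definition of the module structure on $\mathcal D$, one has $c\boxtimes d=\mathbb L(c)\otimes_{\mathcal D}d$.

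The first step is a reduction: it suffices to show that the unit object $\mathbf 1$ of $\mathcal D$ belongs to $\mathcal M$. Indeed, if $\mathbf 1\in\mathcal M$ then for every $d\in\mathcal D$ the object $\mathbb R(d)\boxtimes\mathbf 1=\mathbb L\mathbb R(d)\otimes_{\mathcal D}\mathbf 1\cong\mathbb L\mathbb R(d)$ again lies in $\mathcal M$, being the action of the object $\mathbb R(d)\in\mathcal C$ on $\mathbf 1\in\mathcal M$. Observability says that the counit $\varepsilon_d\colon\mathbb L\mathbb R(d)\to d$ is surjective, so $d$ is a quotient of an object of $\mathcal M$, hence $d\in\mathcal M$; thus $\mathcal M=\mathcal D$.

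The second step produces $\mathbf 1$ inside $\mathcal M$ starting from any nonzero $m\in\mathcal M$. Here ind--rigidity is used: writing $m$ as the filtered union of its rigid subobjects, one of them, say $X$, is nonzero, and $X\in\mathcal M$ since $\mathcal M$ is closed under subobjects. Let $X^{\vee}$ be a dual of $X$. Applying observability to $X^{\vee}$ gives a surjection $\varepsilon_{X^{\vee}}\colon\mathbb L\mathbb R(X^{\vee})\twoheadrightarrow X^{\vee}$; tensoring on the right by the rigid (hence flat) object $X$ keeps it surjective, and composing with the evaluation $\mathrm{ev}_X\colon X^{\vee}\otimes_{\mathcal D}X\to\mathbf 1$ yields a morphism $\mathbb R(X^{\vee})\boxtimes X=\mathbb L\mathbb R(X^{\vee})\otimes_{\mathcal D}X\to\mathbf 1$. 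The evaluation is an epimorphism whenever $X\neq 0$: if its image were a proper subobject $I\subsetneq\mathbf 1$, then by the zig--zag identity together with the flatness (and faithfulness) of $X\otimes_{\mathcal D}-$, the identity of $X$ would factor through the proper subobject $X\otimes_{\mathcal D}I\subsetneq X$, which is absurd. Hence $\mathbb R(X^{\vee})\boxtimes X\twoheadrightarrow\mathbf 1$; the source lies in $\mathcal M$ (it is $\mathbb R(X^{\vee})\in\mathcal C$ acting on $X\in\mathcal M$), and since $\mathcal M$ is closed under quotients we get $\mathbf 1\in\mathcal M$. Combined with the first step, $\mathcal M=\mathcal D$, so every proper submodule category is the trivial one and $\mathcal D$ is simple.

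The routine-but-delicate points, which I would not belabour, are the standard facts about rigid objects in an abelian tensor category over a field of arbitrary characteristic: that $X\otimes_{\mathcal D}-$ is exact and faithful (it has adjoints $X^{\vee}\otimes_{\mathcal D}-$ and ${}^{\vee}X\otimes_{\mathcal D}-$), and the epimorphism property of $\mathrm{ev}_X$ sketched above, which deliberately avoids invoking dimensions since these can vanish in positive characteristic. The one place where the precise meaning of ``ind--rigid'' matters is the claim that a nonzero object has a nonzero rigid subobject; this is immediate once $\mathcal D=\operatorname{Ind}(\mathcal D_0)$ with $\mathcal D_0$ rigid, as the images in $m$ of the objects of $\mathcal D_0$ mapping to $m$ form a filtered family of subobjects with union $m$. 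I expect the genuine content to be concentrated in the twofold use of observability — surjectivity of $\varepsilon_d$ in the reduction and of $\varepsilon_{X^{\vee}}$ in the construction — together with the rigid evaluation; everything else is bookkeeping with the module--category axioms.
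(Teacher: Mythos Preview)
The paper is a survey and does not supply its own proof of this theorem; it merely states the result with a citation to \cite[Theorem~2.3]{kn:obscat}. So there is no proof in the paper to compare against.

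Evaluated on its own merits, your argument is the natural one and is essentially correct under the standard hypotheses that accompany the phrase ``ind--rigid tensor category'' (abelian, $\Bbbk$--linear, $\operatorname{End}(\mathbf 1)=\Bbbk$, $\mathbf 1$ simple). The two-step strategy --- first show $\mathbf 1\in\mathcal M$ via a rigid subobject and the evaluation map, then propagate to all of $\mathcal D$ via the surjective counit --- is exactly how this kind of statement is proved.

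One point deserves a word of caution. You invoke faithfulness of $X\otimes_{\mathcal D}-$ to conclude that $X\otimes_{\mathcal D} I\subsetneq X$ when $I\subsetneq\mathbf 1$, and your parenthetical suggests this follows from $X\otimes_{\mathcal D}-$ having both a left and a right adjoint. That is not right: having adjoints on both sides yields exactness, not faithfulness (the zero functor has both adjoints). Faithfulness of $X\otimes_{\mathcal D}-$ for a nonzero rigid $X$ is a genuine fact about tensor categories, and its usual proof uses that $\mathbf 1$ is simple. In fact, once you assume $\mathbf 1$ is simple the epimorphism argument for $\mathrm{ev}_X$ becomes cleaner: the image of $\mathrm{ev}_X$ is a subobject of $\mathbf 1$, hence $0$ or $\mathbf 1$; it is nonzero because the zig--zag identity forces $\mathrm{ev}_X\neq 0$ when $X\neq 0$. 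Either route closes the argument, but you should not attribute faithfulness to the mere existence of adjoints.
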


In the mentioned paper, the above considerations are used to study in some concrete cases the ideas related to the general definition of observability in particular, it is treated the case of Hopf algebra quotients $\pi:A \to B$ and the situation of the category of the linearized sheaves of a $G$--variety.

\section{Observable actions of groups on varieties}

\subsection{Brief description of the major results}

To illustrate the basic ideas of the current section we revisit some of the relevant results around the concept of observable subgroup $H$ of a connected group $G$. Consider the following four equivalent properties of a closed inclusion $H \subseteq G$.  

\begin{enumerate}
\item[(1)] For every $H$--stable and closed
subset $Y \subset G$ there is a non zero $H$--invariant polynomial function
that is zero on $Y$.

\item[(2)] The homogeneous space $G/H$ is a quasi--affine variety.

\item[(3)]  ${}^H{\bigl[ \Bbbk [G]\bigr]}{} =
\bigl[{}^H{\Bbbk [G]}{} \bigr]$.

\item[(4)]
 For every character $\rho \in \operatorname{\mathcal X}(H)$ there is
 a non zero polynomial $f \in \Bbbk [G]$, with the property that for
 all $x \in H$, $x\cdot f= \rho(x) f$, i.e.~every character is
 extendible.
\end{enumerate}

Around 2010 it was observed by Renner and Rittatore in the paper: {\em Observable actions of algebraic groups} (abbreviated as OAAG) (see \cite{kn:oaag}), that if (1) is taken as the definition of observable subgroup, it can be easily and profitably
generalized, by taking an arbitrary action of a group on a variety
rather than the action of a subgroup in a larger group.

Regarding this idea the following definition appeared in the mentioned paper:
\begin{defi}\label{defi:maindef}
Assume that $H$ is an affine algebraic group and that $X$ is an affine $H$--variety. The action of $H$ on $X$ is said to
be \emph{observable}, if every $H$--stable and closed subvariety $Y
\subset X$ admits an $H$--invariant polynomial function that is zero
on $Y$.
\end{defi}

In this more general situation, some adaptations are needed in order
to obtain results similar to the ones listed above. Here we just give
a succint description and more details appear later.

For example,
concerning the equivalence of conditions (1) and (3), in this general
case one needs to consider also the set $\Omega(X)=\{x \in X: O(x) \text{ is closed and of maximal dimension}\}$  in which case the
valid result guarantees
that the following two conditions (a) and (b) taken together, are
equivalent to the observability of the action: (a) $\bigl[{}^H\Bbbk
  [X]\bigr]={}^H{\bigl[\Bbbk [X]\bigr]}{}$; (b) $\Omega(X)$ has non--empty interior.

This general result is consistent with the case of group--subgroup,
because in the case that $H \subset G$, one has that $\Omega(G)=G$.

The characterization of observability in terms of the quasi--affineness
of the homogeneous space $G/H$, also has a version in the generalized
context guaranteeing the existence of a geometric quotient $X/H$ in a principal $H$--invariant open subset of $X$. 

For the above characterization of the observability of subgroups in
terms of the extension of characters, one has also some partial
results when generalizing: if the group $H$ acting on the affine
variety $X$ is solvable (or if the variety is factorial), the action
is observable if and only if the set of extendible characters is a
group (the concept of extendible character can be defined in exactly
the same manner as before).

\begin{defi}\label{defi:extenchar} If $H$ is an affine algebraic group acting regularly on the affine variety $X$. A character $\chi:H \to \Bbbk$ is said to be extendible, if there is a non zero polynomial $f \in \Bbbk[X]$ with the property that $x\cdot f=\chi(x)f$, for all $x \in H$.
\end{defi}

It is interesting to notice that there is a close
relation between the concepts of observable action and unipotency: 
indeed it can be shown that a group is \emph{universally observable}
(i.e.~its action is observable in any variety where it acts
rationally) if and only if it is unipotent.

The study --in the rather ``opposite'' direction-- of {\em observable actions of reductive groups} is also
interesting. For example, in OAAG it is shown that the action is
observable if and only if the set of closed orbits of maximal
dimension is not empty. Moreover, it can be proved that there is a
maximal $H$--stable closed subset of the original variety, such that
the restricted action is observable. In othere words, for reductive
groups all the actions are generically observable.

Even though, the study by the mentioned authors of this generalized
concept of observability has many other interesting results, in what
follows we limit ourselves in this short survey to the three areas of
results described above.

\subsection{A characterization of observable actions}

The result that follows is a first approximation to a geometric
perspective of the concept of observable actions.  Given a regular
action of an affine algebraic group $H$ on an affine variety $X$, if
the algebra of invariants ${}^H\Bbbk[X]$ is finitely generated we say
the the affinized quotient of $X$ by $H$ exists.  In that situation we
call $X/_{\operatorname{aff}\,}H$ the variety with the aforementioned
algebra of invariants as polynomial algebra and call $\pi: X \to
X/_{\operatorname{aff}\,}H$ the map associated to the natural
inclusion ${}^H\Bbbk[X] \subseteq \Bbbk[X]$.
\begin{theo}\label{theo:basicrel}
Assume that $H$ is an affine group acting regularly on an irreducible affine
variety $X$ and suppose that the affinized quotient $\pi: X 
\rightarrow X/_{\operatorname{aff}\,}H$ exists. If all the
fibers of $\pi$ are (closed) orbits, then the action is observable.
\end{theo}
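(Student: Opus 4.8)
\medskip
\noindent\emph{Proof strategy.} The plan is to use the hypothesis that every fibre of $\pi\colon X\to X/_{\operatorname{aff}\,}H$ is a closed $H$--orbit in order to show that any proper $H$--stable closed subvariety $Y\subsetneq X$ is \emph{saturated} for $\pi$ (that is, $Y=\pi^{-1}(\pi(Y))$), then to descend $Y$ to a proper closed subset of the affine quotient $X/_{\operatorname{aff}\,}H$, and finally to produce the separating invariant by elementary commutative algebra on that affine variety. It suffices to treat $Y$ proper, since for $Y=X$ the required invariant cannot exist and Definition \ref{defi:maindef} puts no condition on it.

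First I would record two elementary facts about $\pi$. Since $\pi$ corresponds to the inclusion ${}^H\Bbbk[X]\subseteq\Bbbk[X]$, it is constant on $H$--orbits, so each of its fibres is $H$--stable; and since every fibre is by hypothesis an $H$--orbit, in particular nonempty, $\pi$ is surjective. Also, as ${}^H\Bbbk[X]$ is a subalgebra of the domain $\Bbbk[X]$ ($X$ being irreducible), the variety $X/_{\operatorname{aff}\,}H$ is irreducible. Next, given a proper $H$--stable closed $Y\subsetneq X$, I would prove $Y=\pi^{-1}(\pi(Y))$. The inclusion ``$\subseteq$'' is trivial; for the reverse, if $\pi(x)=\pi(y)$ with $y\in Y$, then the fibre $\pi^{-1}(\pi(y))$ is a single $H$--orbit containing $y$, hence equals $H\cdot y$, and since $Y$ is $H$--stable and $y\in Y$ we get $x\in H\cdot y\subseteq Y$.

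Then I would show that $Z:=\overline{\pi(Y)}$ is a \emph{proper} closed subset of $X/_{\operatorname{aff}\,}H$: the image $\pi(Y)$ is constructible by Chevalley's theorem, so if it were dense it would contain a nonempty open $U$; by saturation $\pi^{-1}(U)\subseteq\pi^{-1}(\pi(Y))=Y$, and $\pi^{-1}(U)$ is nonempty (as $\pi$ is surjective) and open in the irreducible variety $X$, hence dense, so the closed set $Y$ containing it would equal $X$ --- a contradiction. Finally, since $X/_{\operatorname{aff}\,}H$ is the affine variety with coordinate algebra ${}^H\Bbbk[X]$ and $Z$ is a proper closed subset, I can pick a nonzero $g$ in the ideal of $Z$; pulled back along $\pi$, it is a nonzero element of ${}^H\Bbbk[X]\subseteq\Bbbk[X]$ that vanishes on $\pi^{-1}(Z)\supseteq Y$, which is exactly the nonzero invariant vanishing on $Y$ demanded by Definition \ref{defi:maindef}.

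I expect the only genuine obstacle to be the step ``$Y$ proper $\Rightarrow$ $\overline{\pi(Y)}$ proper'', which really does require both Chevalley constructibility and the irreducibility of $X$, together with the surjectivity of $\pi$ supplied by the fibre hypothesis; the saturation of $Y$ and the extraction of the invariant are routine once that is in place. (One should also double-check the harmless point that the restriction $\pi|_Y\colon Y\to X/_{\operatorname{aff}\,}H$ is indeed a morphism of varieties, so that Chevalley applies to $\pi(Y)$.)
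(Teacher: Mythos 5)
Your proposal is correct and follows essentially the same route as the paper: use the fibre--equals--orbit hypothesis to show a proper $H$--stable closed $Y$ is saturated, deduce via constructibility and irreducibility of $X$ that $\overline{\pi(Y)}$ is a proper closed subset of the affine quotient, and pull back a nonzero function vanishing on it to get the required invariant. The extra details you supply (surjectivity of $\pi$, the explicit saturation argument, Chevalley) are exactly the points the paper leaves implicit.
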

\proof
If $Y\subset X$ is a $H$--stable closed subset with dense image in
$X/_{\text{aff}\,} H$, then $\pi(Y)$ contains an open subset of
$X/_{\text{aff}\,} H$. Hence, using our hypothesis concerning the
relationship between the fibers and the orbits, it follows that
$Y=\pi^{-1}\bigl(\pi(Y)\bigr)$, and as $\pi^{-1}\bigl(\pi(Y)\bigr)$ contains an open
subset of $X$ we conclude that $Y=X$.

It follows that if $Y\subsetneq X$ is an $H$--stable closed subset
strictly contained in $X$ it cannot have dense image; therefore there
exists $z\in (X/_{\text{aff}\,}H)\setminus \overline{\pi(Y)}$. Let
$f\in \Bbbk\bigl[ X/_{\text{aff}\,} H\bigr]={}^H\Bbbk[X]$ be such that
$f(z)=1$ and $f\bigl(\overline{\pi(Y)}\bigr)=0$. Then $f$ is a
non-zero invariant polynomial that is zero when restricted to $Y$.
\qed

 The theorem below
characterizes the observability in terms of conditions for the
invariant rational functions and a geometric condition on the orbits. The theorem just proved helps in the proof of one of the implications.

\begin{theo}
\label{theo:obsercharac}
Let $H$ be an affine group acting regularly on an irreducible affine
variety $X$. Then the following conditions are equivalent:

\noindent (1) The action of $H$ on $X$ is observable. 

\noindent (2) The following two conditions are satisfied:

\begin{enumerate}
\item[(a)] Every invariant rational function on $X$ is the quotient of two polynomials $\bigl[{}^H\Bbbk[X]\bigr]={}^H{\bigl[\Bbbk[X]\bigr]}{}$.

\item[(b)] The set $\Omega(X)$ has nonempty interior.
\end{enumerate}
\end{theo}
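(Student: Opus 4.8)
The plan is to prove the two implications $(1)\Rightarrow(2)$ and $(2)\Rightarrow(1)$ separately; throughout, $d$ denotes the maximal dimension of an $H$--orbit in $X$, so that $X_d:=\{x\in X:\dim O(x)=d\}$ is open (orbit dimension is lower semicontinuous), $H$--stable and dense, and $X\setminus X_d$ is a proper $H$--stable closed set. For the direction $(1)\Rightarrow(2)(a)$, note that an invariant rational function is already invariant, so the only content is that it admits an invariant denominator. Given $\psi\in{}^H\bigl[\Bbbk[X]\bigr]$, I would look at its ideal of denominators $J=\{a\in\Bbbk[X]:a\psi\in\Bbbk[X]\}$: since $h\cdot(a\psi)=(h\cdot a)\psi$ for $h\in H$, the ideal $J$ is $H$--stable, so $\mathcal Z(J)$ is an $H$--stable closed subset, proper unless $\psi\in\Bbbk[X]$ (where there is nothing to prove). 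By observability there is $0\neq f\in{}^H\Bbbk[X]$ vanishing on $\mathcal Z(J)$, hence $f^{N}\in J$ for some $N$, i.e.\ $f^{N}\psi\in\Bbbk[X]$; as $f$ and $\psi$ are $H$--invariant so is $f^{N}\psi$, and $\psi=(f^{N}\psi)/f^{N}$ is the required quotient of invariant polynomials. For $(1)\Rightarrow(2)(b)$, observability gives $0\neq f_1\in{}^H\Bbbk[X]$ with $f_1|_{X\setminus X_d}=0$; if $x\in X_{f_1}:=\{f_1\neq 0\}$, then $\overline{O(x)}\setminus O(x)$ is a union of orbits of dimension $<\dim O(x)\le d$, hence lies in $X\setminus X_d=\mathcal Z(f_1)$, while $f_1$, being $H$--invariant, is constant equal to $f_1(x)\neq 0$ on $\overline{O(x)}$, so that boundary is empty. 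Thus every orbit meeting $X_{f_1}$ is closed of dimension $d$, so $\emptyset\neq X_{f_1}\subseteq\Omega(X)$, proving (b) (in fact $\Omega(X)$ contains a nonempty principal invariant open set).

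For $(2)\Rightarrow(1)$ I would combine Rosenlicht's theorem with (a) and then use (b). By (a), ${}^H\Bbbk(X)=\operatorname{Frac}\bigl({}^H\Bbbk[X]\bigr)$; since this field is finitely generated over $\Bbbk$ (Rosenlicht), choose a finitely generated subalgebra $C\subseteq{}^H\Bbbk[X]$ with $\operatorname{Frac}(C)={}^H\Bbbk(X)$. The inclusion $C\subseteq\Bbbk[X]$ yields a dominant $H$--invariant morphism $\phi:X\to Q_0:=\operatorname{Spec}(C)$ whose generic fibre has dimension $d$. Rosenlicht's theorem provides an $H$--stable dense open $U\subseteq X$ with a geometric quotient $U\to U/H$ having the orbits as fibres, and $U/H$ is birational to $Q_0$; matching the two maps shows that over a dense open of $Q_0$ the fibres of $\phi$, intersected with a suitable $H$--stable open set, are single orbits. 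The role of (b) is to upgrade that open set to a \emph{principal invariant} one: using that $\Omega(X)$ contains a nonempty open subset consisting of closed orbits of dimension $d$, produce $0\neq g\in{}^H\Bbbk[X]$ such that over $X_g$ every fibre of $\phi$ is contained in a single $H$--orbit. Granting this, the proof finishes as in Theorem \ref{theo:basicrel}: let $Y\subsetneq X$ be $H$--stable and closed. If $Y\subseteq\mathcal Z(g)$ then $g$ is the invariant we want. Otherwise $\phi(Y\cap X_g)$ cannot be dense in $Q_0$, for if it were it would contain a dense open subset over which the fibres of $\phi|_{X_g}$ lie in orbits meeting $Y$, hence contained in $Y$, so $Y$ would contain a dense open subset of $X$ and equal $X$. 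Thus $\overline{\phi(Y\cap X_g)}\subsetneq Q_0$; pick $0\neq\bar f\in C$ vanishing on it and put $f=\phi^{\#}(\bar f)\in{}^H\Bbbk[X]$. Then $fg$ is a nonzero $H$--invariant polynomial vanishing on $Y$, since it vanishes on $Y\cap X_g$ through $f$ and on $Y\setminus X_g\subseteq\mathcal Z(g)$ through $g$.

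The one genuinely delicate step is the production of the invariant $g$: upgrading Rosenlicht's ``good behaviour on \emph{some} dense open set'' to ``good behaviour on a dense \emph{principal invariant} open set''. This is exactly what hypothesis (b) is for — Rosenlicht's generic quotient carries no information about its complement, so one needs the closed orbits of maximal dimension furnished by $\Omega(X)$ to be detected by the polynomial invariants — and it is also where the separability phenomena in positive characteristic (already flagged in the proof of Theorem \ref{theo:geocarac}) must be handled. Everything else is bookkeeping: the finitely many, possibly $H$--permuted, irreducible components of $Y$, semicontinuity of orbit and fibre dimension, and Chevalley's theorem on images of morphisms.
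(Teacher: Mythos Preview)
Your argument for $(1)\Rightarrow(2)$ is correct and essentially matches the paper's: the denominator--ideal trick for (a) and the invariant $f_1$ vanishing on $X\setminus X^{\max}$ for (b) are exactly what the paper does, with the paper citing \cite[Theorem 7.3.5]{kn:nosotros2} where you spell out directly why orbits in $X_{f_1}$ are closed.

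For $(2)\Rightarrow(1)$ you have the right architecture --- Rosenlicht, condition (a) to match the generic geometric quotient with an affine model, condition (b) to pass from orbit closures to closed orbits, then the argument of Theorem~\ref{theo:basicrel} --- but you leave the decisive step unproved. You write ``produce $0\neq g\in{}^H\Bbbk[X]$ such that over $X_g$ every fibre of $\phi$ is contained in a single $H$--orbit. Granting this\ldots'' and then flag it again in your last paragraph. That is precisely the step that carries all the weight, and your setup with an arbitrary finitely generated $C\subseteq{}^H\Bbbk[X]$ makes it awkward: the invariants you might need to cut down to the good locus need not lie in $C$, and enlarging $C$ changes $Q_0$.

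The paper closes this gap by a different reduction. Rather than fixing a finitely generated $C$ with the right fraction field, it invokes Grosshans' localization theorem \cite{kn:grossloc} to find $f\in{}^H\Bbbk[X]$ with ${}^H\Bbbk[X_f]$ itself finitely generated, notes that observability on $X$ and on $X_f$ are equivalent, and then works with the \emph{full} affinized quotient $\pi:X\to X/_{\mathrm{aff}}H$. Two structural facts about $\pi$ then do the work you left open: generically the fibres of $\pi$ are orbit \emph{closures} (\cite[Theorem 14.7.1]{kn:nosotros2}), and by (a) the Rosenlicht geometric quotient embeds as an open subset of $X/_{\mathrm{aff}}H$. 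Intersecting with the nonempty interior of $\Omega(X)$ supplied by (b) then yields an invariant $g$ with $\pi|_{X_{fg}}$ having all fibres equal to closed orbits, at which point Theorem~\ref{theo:basicrel} applies verbatim. So the missing ingredient in your sketch is the Grosshans localization, which lets you replace your auxiliary $C$ and $Q_0$ by the honest affinized quotient and thereby gain access to its fibre description.
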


\proof We prove first that $(1) \Rightarrow (2)$. It follows from the
definition of observability that there is an invariant function $f \in
\Bbbk[X]$ with the property that $\emptyset\neq X_f\subset
X^{\text{max}}$, and then \cite[Theorem 7.3.5]{kn:nosotros2}
guarantees that $X_f \subseteq \Omega(X)$. This proves (b). Clearly
$\bigl[{}^H\Bbbk[X]\bigr]\subseteq {}^H{\bigl[\Bbbk[X]\bigr]}{}
$. Let $g\in {}^H{\bigl[\Bbbk[X]\bigr]}{}$, and consider the
ideal $I=\bigl\{ f\in \Bbbk[X] \mathrel{:} fg\in
\Bbbk[X]\bigr\}$. Then $I$ is $H$--invariant, and hence there exists
$0 \neq f\in {}^H\Bbbk[X]$ such that $fg\in {}^H\Bbbk[X]$, which
proves (a).

In order to prove the converse, i.e. (2) $\Rightarrow$ (1), take $f\in
{}^H\Bbbk[X]$ such that ${}^H\Bbbk[X_f]$ is finitely generated (the
existence of such an element $f$ is due to Grosshans in
\cite{kn:grossloc} and a proof appears also in \cite[Theorem
  7.5.6]{kn:nosotros2}).  It is not hard to see that the action of $H$
on $X$ is observable if and only if the action on $X_f$ is so. Thus,
we can assume without loss of generality that ${}^H\Bbbk[X]$ is
finitely generated. Let $\pi:X\to X/_{\text{aff}\,} H$ be the
affinized quotient, i.e. $X/_{\text{aff}\,} H$ is the affine variety
whose algebra of polynomial functions is ${}^H\Bbbk[X]$. By general
results on affinized quotients (e.g. \cite[Theorem
  14.7.1]{kn:nosotros2}) there exists $f\in {}^H\Bbbk[X]$ such that
$\pi^{-1}(y)=\overline{H\cdot x}$ for all $y\in V=(X/_{\text{aff}\,}
H)_f\cong X_f/_{\text{aff}\,} H$. Moreover, for a certain $(X_f)_0$,
an $H$--stable open subset of $X_f$, we have the following commutative
diagram:
\begin{center}
\mbox{
\xymatrix{
(X_f)_0\, \ar@{^(->}[r]\ar@{->>}[d]_-{\rho}&
X_f\ar@{->}[d]^-{\pi}\ar@{^(->}[r]&
X\ar@{->}[d]^-{\pi}\\
(X_f)_0/H\, \ar@{^(->}[r]_-\varphi& X_f/_{\text{aff}\,} H\,\ar@{^(->}[r]&
X/_{\text{aff}\,} H
}
}
\end{center}

\noindent where $\bigr(\rho\,,\,(X_f)_0/H\bigr)$ is a geometric
quotient. Since $\Bbbk\bigl((X_f)_0/H\bigr)=
{}^H{\Bbbk\bigl((X_f)_0\bigr)}{} = \Bbbk(X_f)^H$, it follows
by hypothesis that $\Bbbk\bigl((X_f)_0/H\bigr)
=\Bbbk(X_f/_{\text{aff}\,} H)$. Since $\rho$
and $\pi$ separate closed orbits, it follows that $\varphi$ is an open
immersion.

Since $\Omega(X)$ contains a nonempty open subset, it follows that $\Omega(X)\cap
(X_f)_0\neq \emptyset$. Let $g\in {}^H\Bbbk[X]$ be such that
$(X_f/_{\text{aff}\,} H)_g\subset (X_f)_0/H$. If $y\in
(X_f/_{\text{aff}\,} H)_g$, then $\pi^{-1}(y)=\overline{
  O(x)}$, where $x\in \Omega(X)\cap (X_f)_0$, hence $\pi^{-1}(y)$ is a closed
orbit of maximal dimension. Therefore, $\pi\big|_{_{X_{fg}}}: X_{fg}\to
(X_f/_{\text{aff}\,} H)_g\cong X_{fg}/_{\text{aff}\,} H$ is such that
all its fibers are closed orbits. Replacing $X$ by $X_{fg}$, we can
hence assume that all the fibers of the affinized quotient are closed
orbits.  Therefore, the proof of the observability of the action now
follows directly from Theorem \ref{theo:basicrel}.  \qed

\subsection{Observable actions and unipotency}

By the very definitions, both the unipotency of a group as well as the
observability of an action are conditions that can be formulated in
terms of the existence of enough invariants for certain actions of the
group in question. Therefore, it is natural to expect some close
connection between both concepts. This is illustrated below by showing 
that an affine algebraic group that is ``universally'' observable has
to be unipotent --- compare also with the notion of \emph{unipotent
  action} as defined in \cite{kn:firstwork} or \cite[Section
  7]{kn:secondwork}.

To implement the proof we use a result appearing in
\cite{kn:closedorb}, that guarantees that an affine algebraic group
$H$ is unipotent \emph{if and only if}  for all affine
$H$--variety $X$ the $H$--orbits on $X$ are \emph{closed}. 

\begin{theo}\label{theo:unipchar}
Let $H$ be an irreducible affine algebraic group such that every action
of $H$ on an affine algebraic variety is observable. Then $H$ is a
unipotent group.
\end{theo}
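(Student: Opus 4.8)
The plan is to use the criterion from \cite{kn:closedorb} quoted just before the statement: an affine algebraic group $H$ is unipotent if and only if for every affine $H$--variety $X$, all $H$--orbits on $X$ are closed. So it suffices to show that if every action of $H$ on an affine variety is observable, then every orbit of every such action is closed. First I would fix an affine $H$--variety $X$ and a point $x \in X$, and set $Y = \overline{O(x)} \setminus O(x)$, the boundary of the orbit. This $Y$ is closed and $H$--stable (the orbit closure is $H$--stable, and $H$ permutes the orbits inside it). If $O(x)$ is not closed, then $Y$ is a nonempty proper $H$--stable closed subset of the affine variety $\overline{O(x)}$.

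Next, restrict attention to the affine $H$--variety $Z = \overline{O(x)}$ (a closed $H$--stable subvariety of $X$ is again an affine $H$--variety, so the universal observability hypothesis applies to it). By hypothesis the action of $H$ on $Z$ is observable, so there is a nonzero $H$--invariant $f \in \Bbbk[Z]$ with $f|_Y = 0$. The key point is then that an invariant regular function on $Z$ must be constant on the dense orbit $O(x)$ — indeed $O(x)$ is a single $H$--orbit, so $f$ takes a single value $c$ on all of $O(x)$; by density $f \equiv c$ on $Z$, and hence $f|_Y = c$ as well. Combined with $f|_Y = 0$ this forces $c = 0$, so $f \equiv 0$ on $Z$, contradicting $f \neq 0$. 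Therefore $Y = \emptyset$, i.e.\ $O(x)$ is closed.

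Since $x \in X$ and the affine $H$--variety $X$ were arbitrary, every $H$--orbit on every affine $H$--variety is closed, and the cited characterization of unipotence from \cite{kn:closedorb} yields that $H$ is unipotent. (The hypothesis that $H$ is irreducible is harmless — it is part of the running conventions of this section — and the cited criterion does not actually require it, but there is no need to comment on that.)

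The main subtlety to get right is the step where I pass from $X$ to the closed subvariety $Z = \overline{O(x)}$: one must observe that a closed $H$--stable subset of an affine $H$--variety is itself an affine $H$--variety to which the universal observability hypothesis legitimately applies, and that the restriction of invariants $\Bbbk[X]^H \to \Bbbk[Z]^H$ is surjective enough — in fact here it is cleaner to simply work on $Z$ from the outset, since $Z$ is exactly where the dense-orbit argument bites. The rest is the elementary but essential observation that an invariant function is constant on any single orbit, hence constant on an orbit closure, which is what converts ``enough invariants to cut out $Y$'' into ``$Y$ must be empty''. No delicate quotient theory is needed; this is why Theorem~\ref{theo:unipchar} comes out much more cheaply than the characterization theorems \ref{theo:basicrel} and \ref{theo:obsercharac}.
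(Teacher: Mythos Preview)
Your proof is correct and follows essentially the same route as the paper's: pass to the orbit closure $Z=\overline{O(x)}$, apply the universal observability hypothesis there to get a nonzero invariant vanishing on the boundary $Y=Z\setminus O(x)$, and then use that an invariant is constant on the dense orbit (hence on all of $Z$) to force $Y=\emptyset$; conclude via the closed--orbit characterization of unipotency from \cite{kn:closedorb}. The only inessential difference is presentational --- the paper phrases the observability step in terms of the ideal $I$ of $Z\setminus O$ rather than the subset $Y$ directly.
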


\proof
We first prove that every $H$--orbit on an affine $H$--variety $X$ is
  closed. Indeed, if $ O \subset X$ is an orbit, then the
  action of $H$ on the affine variety $\overline{O}$ is
  observable.  Hence, changing $X$ by $\overline{ O}$, we may
  assume that $X$ has an open (and dense) orbit $O$. If we
  call $I\subset \Bbbk[X]$ the $H$--stable ideal of $X \setminus
   O$, if this algebraic set is not empty, the ideal $I$ is
  not zero. If $f \in \Bbbk[X]$ is a $H$--fixed not zero function in
  $I$, it is clear that $f$ is constant on the orbit and hence on
  $X$. Thus, this constant function taking the value zero on a non
  empty set, has to be zero everywhere and this is a
  contradiction. Using the fact that we mentioned above, as all the orbits are closed we
  conclude that the group $H$ is unipotent.
\qed
  \subsection{Observable actions of reductive groups}
\label{subsection:obsred}

In this section, following \cite{kn:oaag}, we study the properties of
observable actions when the acting group is reductive. It can be
proved that given an action of $H$ on an affine variety $X$ there is a
maximal closed $H$--subvariety of $X$ such that the
restricted action is observable.
\begin{defi}
Recall that if $H$ is an affine group acting in the variety $X$, we
define the socle of $X$ --denoted as $X_{\operatorname{soc}}$ as:
\[X_{\operatorname{soc}}:=\bigcup_x\{O(x):\overline {O(x)}=O(x)\}.\] 
\end{defi}
\begin{theo}
\label{theo:socobser}
Let $H$ be reductive group acting on an
affine algebraic variety $X$.  Then the action is observable if and
only if $\Omega(X) \neq \emptyset$. In particular,
$X_{\operatorname{soc}}$ is the largest $H$--stable closed subset
$Z\subset X$ such that the restricted action $H\times Z\to Z$ is
observable.
\end{theo}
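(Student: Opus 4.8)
The plan is to read both statements off the affinized quotient, which is available because $H$ is reductive: by Nagata's and Haboush's theorems ${}^H\Bbbk[X]$ is finitely generated, so one has $\pi\colon X\to X/_{\operatorname{aff}}H:=\operatorname{Spec}\bigl({}^H\Bbbk[X]\bigr)$, a surjective morphism that sends closed $H$--stable subsets to closed subsets and whose fibres each contain a unique closed $H$--orbit, which moreover lies in the closure of every orbit of that fibre. Write $d=\max_x\dim O(x)$ and $X^{\max}=\{x:\dim O(x)=d\}$; this is open (upper semicontinuity of the stabilizer dimension), nonempty and $H$--stable. As in Theorems~\ref{theo:obsercharac} and \ref{theo:basicrel} I would argue with $X$ irreducible, the general case requiring only a componentwise refinement.

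First, observable $\Rightarrow\Omega(X)\neq\emptyset$: if $\Omega(X)$ were empty, every closed orbit would have dimension $<d$, so the closed orbit inside each fibre of $\pi$ would meet the proper closed $H$--stable set $X\setminus X^{\max}$; hence $\pi(X\setminus X^{\max})=X/_{\operatorname{aff}}H$, and since the invariant functions on $X$ are exactly the pullbacks of functions on $X/_{\operatorname{aff}}H$, no nonzero invariant could vanish on $X\setminus X^{\max}$, contradicting observability. For the converse, assume $\Omega(X)\neq\emptyset$ and fix a closed orbit $O_{0}$ of dimension $d$. Every orbit of the fibre $\pi^{-1}\bigl(\pi(O_{0})\bigr)$ has $O_{0}$ in its closure, hence dimension $d$, so this fibre lies in $X^{\max}$; therefore $X\setminus X^{\max}$ is a proper closed $H$--stable set and there is $0\neq f\in{}^H\Bbbk[X]$ vanishing on it. Then $X_{f}$ is a nonempty $H$--stable open subset of $X^{\max}$, and $X_{f}=\pi^{-1}(V)$ with $V=(X/_{\operatorname{aff}}H)_{f}$ open and nonempty. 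For $y\in V$ every orbit of $\pi^{-1}(y)\subseteq X^{\max}$ has dimension $d$; the closed one has dimension $d$, and any orbit $O\subseteq\pi^{-1}(y)$ has irreducible closure of dimension $d$ containing that $d$--dimensional closed orbit, so it \emph{is} that closed orbit. Thus each fibre over $V$ is a single closed orbit, and $X_f\subseteq\Omega(X)$. If now $Y\subsetneq X$ were proper, closed and $H$--stable with $\pi(Y)$ dense, then $\pi(Y)=X/_{\operatorname{aff}}H$ ($X/_{\operatorname{aff}}H$ is irreducible and $\pi(Y)$ is closed), so $Y$ meets --- hence, being $H$--stable, contains --- every fibre over $V$; then $Y\supseteq\pi^{-1}(V)=X_{f}$, dense in the irreducible $X$, forcing $Y=X$. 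So every proper closed $H$--stable $Y$ has $\pi(Y)\subsetneq X/_{\operatorname{aff}}H$, and pulling back a function vanishing on $\pi(Y)$ yields a nonzero invariant vanishing on $Y$: the action is observable.

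For the last assertion, $X_{\operatorname{soc}}$ (replaced by its closure if one wants a genuinely closed set --- the union of closed orbits need not be closed) is $H$--stable and closed, and the $H$--orbits closed in it are exactly the $H$--orbits closed in $X$ that it contains; by reductivity every fibre of $X_{\operatorname{soc}}\to X_{\operatorname{soc}}/_{\operatorname{aff}}H$ contains one of them, so $\Omega(X_{\operatorname{soc}})\neq\emptyset$ and, by the first part, the restricted action on $X_{\operatorname{soc}}$ is observable. For maximality, let $Z\subseteq X$ be closed and $H$--stable with observable restricted action; by the equivalence just proved (applied componentwise to $Z$) the orbits closed in $Z$ and of maximal $Z$--dimension form a dense subset of $Z$, and an orbit is closed in $Z$ iff it is closed in $X$ (because $Z$ is closed in $X$), so this dense set lies in $X_{\operatorname{soc}}$, whence $Z\subseteq\overline{X_{\operatorname{soc}}}$ (which is $X_{\operatorname{soc}}$ under the convention above). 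The step I expect to be the real work is the converse half of the equivalence --- identifying the fibres of $\pi$ over $V$ with single closed orbits and deducing observability --- together with the bookkeeping needed to descend from the irreducible case to arbitrary closed $H$--stable subvarieties $Z$ and to pin down the exact formulation of $X_{\operatorname{soc}}$.
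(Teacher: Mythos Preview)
Your proof is correct and follows essentially the same route as the paper's: both directions are read off the categorical quotient $\pi\colon X\to X/_{\operatorname{aff}}H$ available for reductive $H$, and the $X_{\operatorname{soc}}$ assertion is handled by applying the equivalence componentwise to any candidate $Z$. The paper organizes the converse a little differently---for each proper closed $H$--stable $Z$ it directly picks a closed orbit $O\subset\Omega(X)\setminus\pi^{-1}(\pi(Z))$ and separates $O$ from $Z$ by an invariant---whereas you first produce an invariant $f$ with $X_f\subseteq\Omega(X)$ and then treat all $Y$ at once; your version has the advantage of making explicit that $\Omega(X)$ contains a nonempty open set, a fact the paper's line ``if $\Omega(X)\subset Z$ it follows that $Z=X$'' uses without justification. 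One caution: your sentence ``$X\setminus X^{\max}$ is a proper closed $H$--stable set and there is $0\neq f\in{}^H\Bbbk[X]$ vanishing on it'' reads as circular if taken at face value---spell out that $\pi(X\setminus X^{\max})$ is closed (reductivity) and misses $\pi(O_0)$, hence is proper in $X/_{\operatorname{aff}}H$, and that $f$ is the pullback of a function vanishing on it, exactly as you do for $Y$ at the end of the paragraph.
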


\proof If the action is observable, it follows from Theorem
\ref{theo:obsercharac} that $\Omega(X) \neq \emptyset$.  Assume now
that $\Omega(X) \neq \emptyset $ and let $Z\subsetneq X$ be a
$H$--stable closed subset and call $I$ the ideal associated to $Z$ ;
we want to show that ${}^HI \neq \{0\}$. If $\Omega(X) \subset Z$ it
follows that $Z=X$; hence $\Omega(X) \setminus Z\neq \emptyset$.
Recall that the semi--geometric quotient $\pi:X\to X /
H=\operatorname{Spm}\bigl({}^H\Bbbk[X]\bigr)$ separates closed orbits
--$\operatorname{Spm}$ is the maximal spectrum functor.  It follows
that $ \Omega(X) \setminus \pi^{-1}\bigl(\pi(Z)\bigr)\neq \emptyset$,
since the closed orbits belonging to $Z$ and
$\pi^{-1}\bigl(\pi(Z)\bigr)$ are the same. Let $O\subset \Omega(X)
\setminus Z$ be a closed orbit. Then $\pi^{-1}\bigl(\pi( O)\bigr)=O$,
again because $\pi$ separates closed orbits. Since $\pi$ also
separates $H$--stable closed subsets, it follows that there exists
$f\in {}^H\Bbbk[X]$ such that $f\in I'\subset I$ where $I'$ is the ideal of $\pi^{-1}\bigl(\pi(Z)\bigr)$ and $f(
O)=1$; in particular, $f\in {}^H I\setminus \{0\}$ and the
action is observable.

It follows by the very definition of $X_{\operatorname{soc}}$ that
$\Omega(X_{\operatorname{soc}}) \neq \emptyset$. Let $Z$ be an $H$--stable
irreducible closed subset such that the restricted action is
observable; then $\Omega(Z)$ is a nonempty open subset of $Z$,
consisting of closed orbits in $Z$, and hence in $X$. It follows that
$Z=\overline{\Omega(Z)}\subset X_{\operatorname{soc}}$.  If $Y$ is any
$H$--stable closed subset, it can be proved that
the restriction of the action to any irreducible component $Z$ is
observable, and hence $Y\subset X_{\operatorname{soc}}$.  \qed

\begin{theo}
  Let $H$ be a reductive group acting on an affine variety $X$ and call
  $I_0$--the ideal associated to $X_{\operatorname{soc}}$--. Then $I_0$
  is the largest $H$--stable ideal
such that ${}^HI=(0)$.
\end{theo}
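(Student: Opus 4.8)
The plan is to unpack the statement ``$I_0$ is the largest $H$--stable ideal with ${}^HI=(0)$'' into its two constituent assertions and to prove each: (i) ${}^HI_0=(0)$, and (ii) every $H$--stable ideal $I\subseteq\Bbbk[X]$ with ${}^HI=(0)$ satisfies $I\subseteq I_0$. I would use throughout that, by its very definition, $X_{\operatorname{soc}}$ is the set--theoretic union of all closed $H$--orbits of $X$; consequently $I_0=\I(X_{\operatorname{soc}})$ is precisely the ideal of polynomial functions vanishing on every closed orbit, and being the ideal of an $H$--stable set it is itself $H$--stable, so that ``largest $H$--stable'' is meaningful. (One can equally well invoke the description of $X_{\operatorname{soc}}$ furnished by Theorem \ref{theo:socobser}.)

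For (i) I would argue by contradiction. If $0\neq f\in{}^H\Bbbk[X]$ lay in $I_0$, it would vanish on every closed orbit. Take any $x\in X$: the orbit closure $\overline{O(x)}$ is a nonempty $H$--stable closed subset of the affine variety $X$, hence contains a closed orbit $O'$ (an orbit of minimal dimension among those inside it is automatically closed). Since $f$ is $H$--invariant it is constant, with value $f(x)$, on the dense subset $O(x)$ of $\overline{O(x)}$, hence on $\overline{O(x)}$ and in particular on $O'$; but $f$ vanishes on $O'$, so $f(x)=0$. As $x$ was arbitrary and $\Bbbk[X]$ is reduced, $f=0$, a contradiction. (Note that this half uses only that $H$ is an affine algebraic group.)

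For (ii) I would take an $H$--stable ideal $I$ with ${}^HI=(0)$; it is proper, since otherwise $1\in{}^HI$, so $\mathcal Z(I)\neq\emptyset$. Arguing by contradiction, suppose $I\not\subseteq I_0$: then some $g\in I$ fails to vanish on $X_{\operatorname{soc}}$, so $g|_O\not\equiv0$ for some closed orbit $O$. Since $O$ is a single orbit and $\mathcal Z(I)$ is an $H$--stable closed set, $O\cap\mathcal Z(I)$ equals $O$ or $\emptyset$; it cannot be $O$, because $g\in I$ vanishes on $\mathcal Z(I)$ but not on $O$, so $O$ and $\mathcal Z(I)$ are \emph{disjoint} $H$--stable closed subsets of $X$. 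Here the reductivity of $H$ enters: by geometric reductivity the invariants separate disjoint closed $H$--stable subsets, so there is $f\in{}^H\Bbbk[X]$ with $f|_O\equiv1$ and $f|_{\mathcal Z(I)}\equiv0$. The latter gives $f\in\I(\mathcal Z(I))=\sqrt I$, hence $f^N\in I$ for some $N\geq1$; and $f^N\in{}^H\Bbbk[X]$ with $f^N|_O\equiv1$, so $f^N\neq0$, which exhibits a nonzero element of ${}^HI$ --- a contradiction. Hence $I\subseteq I_0$, and together with (i) and the $H$--stability of $I_0$ this is exactly the assertion.

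The step I expect to be the crux is the passage in (ii) from ``$I$ is merely an $H$--stable ideal possessing an element nonzero on a closed orbit'' to ``$I$ contains a nonzero invariant''; this is where the reductivity hypothesis is genuinely used, via the separation of disjoint closed $H$--stable subsets by invariants (a standard consequence of Haboush's theorem on the geometric reductivity of reductive groups), supplemented by the elementary Nullstellensatz device of raising $f$ to a power in order to descend from $\sqrt I$ back into $I$. Everything else in the argument is formal; in particular part (i) needs no reductivity at all. If one prefers not to cite the separation statement directly, one can instead restrict $g$ to the closed --- hence affine --- orbit $O$, observe that $\Bbbk[X]\to\Bbbk[O]$ is a surjective morphism of $H$--module algebras whose target has only $(0)$ and itself as $H$--stable ideals, and apply geometric reductivity to lift $1\in{}^H\Bbbk[O]$ to an invariant of $\Bbbk[X]$ that vanishes on $\mathcal Z(I)$ --- which comes to the same thing.
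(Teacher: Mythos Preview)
Your proof is correct, and it takes a genuinely different route from the paper's.

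The paper proceeds indirectly: it first \emph{constructs} the maximal ideal as $I=\sum\{J:{}^HJ=(0)\}$, uses the $p$--power surjectivity form of geometric reductivity (applied to the surjection $\bigoplus J\twoheadrightarrow I$) to show ${}^HI=(0)$, checks separately that $I$ is radical, and then shows that the restricted action on $\mathcal Z(I)$ is observable so that Theorem \ref{theo:socobser} forces $\mathcal Z(I)=X_{\operatorname{soc}}$, i.e.\ $I=I_0$. Your argument instead works directly with $I_0$: part (i) is an elementary orbit--closure argument that needs no reductivity at all, and in part (ii) you use the separation of disjoint $H$--stable closed sets by invariants together with the Nullstellensatz trick $f\in\sqrt I\Rightarrow f^N\in I$ to produce a nonzero invariant in $I$. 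This is cleaner in that it avoids both the radical check and the appeal to Theorem \ref{theo:socobser}, and it isolates precisely the single place where reductivity is used. The paper's route, on the other hand, has the virtue of tying the result back to the observability characterization of $X_{\operatorname{soc}}$, which is the running theme of the section; both forms of reductivity invoked (power--surjectivity on invariants versus separation of closed invariant sets) are standard equivalent consequences of Haboush's theorem.
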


\proof Let $I=\sum \{J: {}^HJ=(0)\}$ be the sum of all $H$--stable
ideals such that ${}^HJ=(0)$, and consider the canonical $H$--morphism
$\varphi: \bigoplus \{J: {}^HJ=(0)\} \to I$. Since $\varphi$ is
surjective, it follows from the reductivity of $H$ that for every
$f\in {}^HI$ there exist $n\geq 0$ and $h\in {}^H \bigoplus \{J:
{}^HJ=(0)\}=(0)$ such that $\varphi(h)=f^{p^n}$, where
$\operatorname{char}\Bbbk=p$, then as our algebras are free of
nilpotents, we deduce that ${}^HI=(0)$.

Let $O\subset X$ be a closed orbit, call $Z$ the set of zeros of
$I$ and assume that $O\cap Z =\emptyset$. Since ${}^H\Bbbk[X]$
separates $H$--stable closed subsets, if follows that there exists
$f\in {}^H\Bbbk[X]$ such that $f\big|_{_O}=1$ and $f\big|_Z=0$,
hence ${}^HI\neq (0)$ and we get a
contradiction. Therefore, $X_{\operatorname{soc}}\subset Z$.

Observe that if $f\in {}^H{\bigl(\sqrt{I}\bigr)}$  is such that $f^n\in I$, it follows
that for any $a\in H$, then  $a\cdot (f^n)=f^n\in I$, and hence $f=0$.
 Thus, ${}^H\bigl(\sqrt{I}\bigr)=(0)$ and by maximality  then $I=\sqrt{I}$.
By  Theorem \ref{theo:socobser}, if we prove that the action
$ H \times Z \to Z$ is observable ($Z$ is the set of zeros of $I$), then
$X_{\operatorname{soc}}=Z$. But $\Bbbk[Z]\cong
\Bbbk[X]/I$, and hence  the $H$--stable ideals of
$\Bbbk[Z]$ are of the form $J/I$, were
$J\subset \Bbbk[X]$ is an $H$--stable ideal containing $I$. Then if
$J/I\neq (0)$ it follows that $I\subsetneq J$ and hence, by maximality
of $I$,
${}^HJ\neq (0)$. Thus, ${}^H(J/I)\neq (0)$, since ${}^H\Bbbk[X]$ injects in
$\Bbbk[X]/I$.
\qed
\section{Final remarks}

Arising in the late 1950s and early 1960s from questions about the
existence of faithfull representations of Lie groups, the concept of
observability in his development along almost sixty years reached out
in a profitable interaction with most of the crucial themes of
--geometric and algebraic-- invariant theory. Today the original concept together with his generalizations, should be considered as an indispensable element in the toolkit of modern invariant theory.


\begin{thebibliography}{99}

\bibitem{kn:obscat} N. Andruskiewitsch, W. Ferrer Santos, {\em On observable module categories} in Groups, Rings and Group Rings, eds. A. Gianbruno, C. Polcino Milies, K.S. Sehgal. Lecture Notes in Pure and Appl. Math. No 248, Chapman \& Hall/CRC, Boca Raton, 2006, pp.11--23.
   
  
\bibitem{kn:hmbb} A. Bialynicki-Birula, G. Hochschild,
D. Mostow, {\em Extension of representations of algebraic
linear groups.}  Amer. J. Math., vol 85, 1963, pp. 131--144.


\bibitem{kn:borelbook} A. Borel, {\em Linear algebraic
groups.}  Graduate Texts in Mathematics, Vol 126, 2d. enlarged ed.,
Springer-Verlag, New York, 1991.

\bibitem{kn:Che} C. Chevalley, \emph{Th\'{e}orie des groupes de
  {L}ie. {T}ome {II}. {G}roupes alg\'{e}briques}, Actualit{\'e}s
  Sci. Ind. no. 1152, Hermann \& Cie., Paris, 1951.
  
\bibitem{kn:ChowPQ} W.-L. Chow, \emph{On the projective embedding of
  homogeneous varieties}, in: Algebraic geometry and topology. A
  symposium in honor of S.Lefschetz, pp 122--128, Princeton University
  Press, Princeton, N. J., 1957.

\bibitem{kn:CPS} E. Cline, B. Parshall, L. Scott, {\em
Induced modules and affine quotients}, Math. Ann., vol 230,
1, 1977, pp. 1--14.

\bibitem{kn:CPSMackey} E. Cline, B. Parshall, L. Scott,{\em
A Mackey imprimitivity theory for algebraic groups},
Math. Z. 182 (1983), pp. 447-471.

\bibitem{kn:doi1} Y. Doi, {\em On the structure of relative
Hopf modules.} Comm. Algebra, vol 11, no. 3, 1983,
pp. 243--253.


\bibitem{kn:Doitot} Y. Doi, {\em Algebras with total
integrals.}  Comm. Algebra, vol 13, no. 10, 1985,
pp. 2137--2159.

\bibitem{kn:doita} Y. Doi, M. Takeuchi, {\em Cleft comodule algebras for a bialgebra.}  Comm. Algebra, vol 14, no. 5, 1986,
pp. 801--818.


\bibitem{kn:firstwork} W. Ferrer Santos, {\em A
generalization of the concept of linearly and geometrically
reductive group.} Topics in Algebraic Geometry. Proceedings
of the workshop on algebraic geometry. Eds. L. Brambila,
X. G\'omez-Mont. Guanajuato, Mexico, 1989.  Aportaciones
Matem\'aticas, Notas de Investigaci\'on , vol 5. Sociedad
Matem\'atica Mexicana. 1992.

\bibitem{kn:closedorb} W. Ferrer Santos, {\em Closed conjugacy classes, closed orbits and structure of algebraic groups},
Comm. Algebra 19 (1991), no. 12, 3241--3248.

\bibitem{kn:secondwork} W. Ferrer Santos, A. Rittatore, {\em Linearly
  reductive and unipotent actions of affine groups}.International
  Journal of Mathematics Vol. 26, No. 05, (2015) 01--31.



\bibitem{kn:nosotros2} W. Ferrer Santos, A. Rittatore, {\em Actions
  and Invariants of Algebraic Groups. 2d. Edition} Series: Monographs
  and Research Notes in Mathematics, CRC Press, Florida, 2017.

\bibitem{kn:gross14} F. D. Grosshans, {\em Observable groups and
  Hilbert’s fourteenth problem.}, Amer. J. Math.  95, 1973, 229--253.

\bibitem{kn:grossloc} F.D. Grosshans,{\em Localization and invariant theory}, Adv. in Math. 21 (1976), no. 1, 50--60.
  
\bibitem{kn:gross} F.D. Grosshans, {\em Algebraic
homogeneous spaces and invariant theory.} Lecture Notes in
Mathematics, Vol. 1673, Springer--Verlag, Berlin, 1997.

\bibitem{kn:haboushred} W. J. Haboush,{\em Reductive groups are
  geometrically reductive}, Ann. of Math. (2) 102 (1975), no. 1,
  67--83.

\bibitem{kn:matsuhab} W.J. Haboush, {\em Homogeneous vector
bundles and reductive subgroups of reductive algebraic
groups.}  Amer. J. Math., Vol 100, 1978, pp. 1123--1137.

\bibitem{kn:hilbertpr} D. Hilbert, {\em Probl\`{e}mes futures de
  math\'{e}matiques}, Comptes Rendus du 2${}^{\grave{e}me}$
  Congr\`{e}s International des Math\'{e}maticiens, Gauthier-Villars,
  Paris, 1902, pp58--111.
 
\bibitem{kn:homolie}  G. Hochschild, G. D. Mostow, {\em Extension of representations of Lie groups and
Lie algebras, I.}, Amer. J. of Math., vol. 79, 1957, pp.
924--942.

\bibitem{kn:ho}G. Hochschild, {\em Cohomology of algebraic
linear groups.} Illinois J. Math. vol 5, 1961, pp 492--519.


\bibitem{kn:inj} G. Hochschild, {\em Rationally injective modules
    for algebraic linear groups.} Proc. Amer. Math. Soc. vol 14, 1963,
    pp 880--883.

  \bibitem{kn:hobook0} G. Hochschild, {\em The structure of Lie
    groups.} Holden-Day, San Francisco, 1965.
\bibitem{kn:homounip} G. Hochschild, G.D. Mostow, {\em Unipotent
  groups in invariant theory}, Proc. Nat. Acad. Sci. U.S.A. 70 (1973),
  646--648.
  
\bibitem{kn:hobook} G. Hochschild, {\em Basic theory of
algebraic groups and Lie algebras.}  Graduate Texts in
Mathematics, Vol 75, Springer-Verlag, New York, 1981.

\bibitem{kn:humphlevi} J.E. Humphreys,\emph{Existence of
    Levi factors in certain algebraic groups.} Pacific
  Journal of Mathematics, Vol 23, N 3, (1967), pp 543--546,

\bibitem{kn:humph} J.E. Humphreys, {\em Linear Algebraic
Groups.}  Graduate Texts in Mathematics, Vol 21,
Springer-Verlag, New York, 1975.

\bibitem{kn:JS} A. Joyal, R. Street, \emph{Braided tensor
  categories.} Adv. Math, {\bf 102}, 1, (1993), pp. 20--78.
  
\bibitem{kn:matsu} Y. Matsushima, {\em Espaces homog\`enes
de Stein des groupes de Lie complexes.} Nagoya Math. Jour.,
vol 16, 1960, pp 205--218.

\bibitem{kn:CWM} S. Mac Lane, {\em Categories for the working
  mathematician}, 2nd ed., Graduate texts in mathematics, no. 5,
  Springer--Verlag, New York, 1998.

\bibitem{kn:mcninchlevi} G.J. McNinch, {\em Levi
    decompositions of a linear algebraic
    group}. Transf. Groups, vol 15, N 4, (2010), pp
  937--964.
\bibitem{kn:montgomery} S. Montgomery, {\em Hopf algebras and their action on rings.} CMBS Regional Conference Series in Mathematics, vol. 82, Published by the AMS, Providence, Rhode Island, 1993.

\bibitem{kn:mostow} G. D. Mostow, \emph{Extensions of representations of Lie groups, II}, Amer. J. of Math., vol. 80 (1958), pp. 331--347.


\bibitem{kn:Mumford} D. Mumford, {\em Geometric invariant
theory.}  Ergebnisse der Mathematik und ihrer Grenzgebiete,
Vol 34, Springer-Verlag, Berlin, 1965.

\bibitem{kn:Mumsur} D. Mumford, {\em Hilbert’s fourteenth problem–the
  finite generation of subrings such as rings of invariants},
  Mathematical developments arising from Hilbert problems
  (Proc. Sympos.  Pure Math., Vol. XXVIII, Northern Illinois Univ., De
  Kalb, Ill., 1974), Amer. Math.  Soc.., Providence, R. I., 1976,
  pp. 431--444.
\bibitem{kn:nagataicm} M. Nagata,{\em  On the fourteenth problem of Hilbert}, Proc. Internat. Congress Math. 1958,
  Cambridge Univ. Press, New York, 1960, pp. 459--462.

\bibitem{kn:nagatata} M. Nagata, {\em Lectures on the fourteenth problem of Hilbert}, Tata Institute of Fundamental Research, Bombay, 1965.
  
\bibitem{kn:ob} U. Oberst, {\em Affine Quotientenschemata nach affinen, algebraischen Gruppen und induzierte Darstellungen}. J. Algebra, vol 44 (1977), no. 2, pp 503--538.

\bibitem{kn:vinpop} V. L. Popov, E. B. Vinberg,{\em Invariant theory, Algebraic geometry. IV}, Enciclopae-
dia of Mathematical Sciences, vol. 55, Springer-Verlag, Berlin, 1994, Linear algebraic groups. Invariant theory, A translation of Algebraic geometry. 4 (Russian), Akad. Nauk SSSR Vsesoyuz. Inst. Nauchn. i Tekhn. Inform., Moscow, 1989, Translation edited by A. N. Parshin and I. R. Shafarevich, pp. vi+284.
  
\bibitem{kn:oaag} L.Renner, A. Rittatore, {\em Observable
actions of algebraic groups}. Transform. Groups, vol 14 ,
2010, pp 985--999. arXiv:0902.0137v2[math.AG].

\bibitem{kn:rich}  R. W. Richardson, {\em Affine coset spaces of reductive algebraic groups}. Bull. London Math. Soc. 9 (1977), no. 1, 38--41.


\bibitem{kn:Roshomo} M. Rosenlicht, M., \emph{Some basic theorems on algebraic groups}. Amer. J. Math. vol 78, 1956, pp 401--443.

\bibitem{kn:rosenlicht} M. Rosenlicht, {\em On quotient
varieties and the affine embedding of certain homogeneous
spaces.}  Trans. Amer. Math. Soc. vol 101, 1961, pp
211--223.

\bibitem{kn:sche} H.-J. Schneider, {\em Principal homogeneous spaces for arbitrary Hopf algebras.} Israel, J. of Mat. vol 72, (1990), no. 1-2, pp 167--195.
\bibitem{kn:spr} T.A. Springer, {\em Invariant theory. }
Lecture Notes in Mathematics, Vol. 585, Springer-Verlag,
Berlin, 1977.

\bibitem{kn:sprbook} T.A. Springer, {\em Linear algebraic groups, 2 ed}. Modern Birkh\"auser classics, Birkh\"auser, Basel, 1998.

  \bibitem{kn:Sbook} M. Sweedler, {\em Hopf algebras.} Mathematics Lecture Notes Series, W.A. Benjamin, inc.,New York, 1969.

\bibitem{kn:Weilhomo} A. Weil, \emph{On algebraic groups and homogeneous spaces.} Amer. J. Math. vol 77, 1955, pp 493--512. 

\end{thebibliography}
\end{document}